\setlist[enumerate]{label=\textup{(\arabic*)}}
\newcommand{\msc}[1]{\href{https://zbmath.org/classification/?q=#1}{#1}}
\crefname{main} {Theorem}       {Theorems}
\crefname{thm}  {Theorem}       {Theorems}
\crefname{lem}  {Lemma}         {Lemmas}
\crefname{prop} {Proposition}   {Propositions}
\crefname{dfn}  {Definition}    {Definitions}
\crefname{fig}  {Figure}        {Figures}
\crefname{tbl}  {Table}         {Tables}
\crefname{rmk}  {Remark}        {Remarks}
\crefname{exm}  {Example}       {Examples}
\theoremstyle{plain}
\newtheorem{main} {Theorem}
\newtheorem{thm} {Theorem} [section]
\newtheorem{prop}   [thm] {Proposition}
\newtheorem{cor}    [thm] {Corollary}
\newtheorem{lem}    [thm] {Lemma}
\theoremstyle{definition}
\newtheorem{rmk}    [thm] {Remark}
\numberwithin{equation}{section}
\newcommand{\F}{\mathbb{F}}
\newcommand{\FD}{\mathfrak{D}}
\newcommand{\FC}{\mathfrak{C}}
\DeclarePairedDelimiterX\set[1]\lbrace\rbrace{\,\def\given{\mid}#1\,}
\DeclarePairedDelimiterX\gen[1]\langle\rangle{\,\def\given{\mid}#1\,}
\newcommand{\Agemo}{\mho}                    % Agemo subgroup
\DeclareMathOperator{\Frat}{\Phi}            % Frattini subgroup
\newcommand{\Cyc}[1]{C_{#1}}                 % cyclic group
\newcommand{\D}[1]{#1'}                      % derived subgroup
\newcommand{\Z}{\bm{\mathrm{Z}}}             % center
\newcommand{\CC}{\bm{\mathrm{{C}}}}          % centralizer
\newcommand{\Soc}[1]{\Omega(\Z(#1))}         % socle
\newcommand{\DG}[1]{G_{\scriptsizedice{#1}}} % dice group
\begin{document}

\title[Identification of non-isomorphic $2$-groups]
    {Identification of non-isomorphic $2$-groups with\\ dihedral central quotient and\\ isomorphic modular group algebras}

% LM
\author[L.~Margolis]{Leo Margolis\,\orcidlink{0000-0002-9413-0775}}
\address[Leo Margolis]
    {Universidad Aut\'onoma de Madrid, Departamento de Matem\'aticas,  C/ Francisco Tom\'as y Valiente, 7 Facultad de Ciencias, m\'odulo 17, 28049 Madrid, Spain.}
\email{leo.margolis@icmat.es}

% TS
\author[T.~Sakurai]{Taro Sakurai\,\orcidlink{0000-0003-0608-1852}}
\address[Taro Sakurai]
    {Department of Mathematics and Informatics, Graduate School of Science, Chiba University, 1-33, Yayoi-cho, Inage-ku, Chiba-shi, Chiba, 263-8522, Japan.}
\email{tsakurai@math.s.chiba-u.ac.jp}

%\thanks{
%}

\subjclass[2020]{
    Primary
    \msc{20C05}; % Group rings of finite groups and their modules (group-theoretic aspects)
    Secondary
    \msc{16S34}, % Group rings [See also 20C05, 20C07], Laurent polynomial rings
    \msc{20D15}, % Finite nilpotent groups, p-groups
    \msc{16U60}% % Units, groups of units (associative rings and algebras)
}

\keywords{Modular isomorphism problem, group algebra, counterexamples, finite $2$-groups, classification.}

\date{\today}

\begin{abstract}
    The question whether non-isomorphic finite $p$-groups can have isomorphic modular group algebras was recently answered in the negative by
    Garc\'{\i}a-Lucas, Margolis and del R\'{\i}o [J.\ Reine Angew.\ Math.\ 783 (2022), pp.~269--274].
    We embed these negative solutions in the class of two-generated finite $2$-groups with dihedral central quotient,
    and solve the original question for all groups within this class.
    As a result, we discover new negative solutions and simple algebra isomorphisms.
    At the same time, the positive solutions for most of the groups in this class give some insights what makes the negative solutions special.
\end{abstract}

\maketitle

%\tableofcontents

\section*{Introduction}
The \emph{modular isomorphism problem} asks whether non-isomorphic finite $p$-groups can have isomorphic group algebras over a field of positive characteristic~$p$.
It seems to have first been raised in the 1950s and appears in an influential survey by Brauer~\cite[Problem 2 and Supplement \S 9]{Bra63}.
Over the following decades, positive results for many classes of finite $p$-groups were published, cf.~\cite{Mar22} for an overview.
However, a few years ago, Garc\'{\i}a-Lucas, Margolis and del R\'{\i}o~\cite{GLMdR22} discovered
the first non-isomorphic finite $2$-groups that have isomorphic group algebras over an arbitrary field of characteristic~$2$.

Nevertheless, the problem remains poorly understood in general and open for odd primes, for instance.
Our goal in this article is to understand better how the negative solutions presented in~\cite{GLMdR22} come to be.
These groups were discovered during an effort to solve the modular isomorphism problem for two-generated finite $p$-groups with cyclic derived subgroup,
after these had been classified by Broche, Garc\'{\i}a-Lucas and del R\'{\i}o~\cite{BGLdR23}.
Although considerable effort has been devoted to solving the problem in this wide class,
it has not been achieved completely so far~\cite{GLdRS23, GLdR23}.
We consider the negative solutions from another viewpoint
and embed them in a natural class of groups for which we can completely solve the modular isomorphism problem.

We call a non-abelian group that is generated by two elements of order~$2$ \emph{dihedral}.
For a group $G$, we call the quotient by its center the \emph{central quotient} of $G$.
In this article, we study the modular isomorphism problem for two-generated finite $2$-groups with dihedral central quotient.
This class of groups includes the negative solutions presented in~\cite{GLMdR22},
and we are able to identify all negative solutions to the problem---including new counterexamples---within this class of groups.

\newpage % Hack to avoid page break in the main theorem, manually. Shouold be removed for the submitted version.
\begin{main}\label{main:A}
    Let $G$ and $H$ be finite $2$-groups and $\F$ a field of characteristic~$2$.
    Suppose that $G$ is two-generated and the central quotient of $G$ is dihedral.
    Then $\F G \cong \F H$ but $G \not\cong H$ if and only if $G$ and $H$ are isomorphic to the groups
    \begin{align*}
        \langle\, x, y, z \mid x^{2^n} &= 1, & y^{2^m} &= 1,       & z^{2^\ell} &= 1, & [y, x] &= z, & [z, x] &= z^{-2}, & [z, y] &= z^{-2} \,\rangle, \\
        \langle\, a, b, c \mid a^{2^n} &= 1, & b^{2^m} &= a^{2^m}, & c^{2^\ell} &= 1, & [b, a] &= c, & [c, a] &= c^{-2}, & [c, b] &= c^{-2} \,\rangle
    \end{align*}
    for some integers $n > m > \ell \ge 2$.
\end{main}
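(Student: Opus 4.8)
The plan is to combine a classification of the two-generated finite $2$-groups with dihedral central quotient with a systematic comparison of modular group algebra invariants, handling the positive solutions and the exceptional pairs separately. First one classifies these groups: for $G = \gen{x,y}$ with $G/\Z(G)$ dihedral, since $G/\Z(G)$ is generated by two involutions one can choose the generators together with a ``rotation'' $z$ --- a preimage of a generator of the cyclic part of $G/\Z(G)$ --- so that $x$ and $y$ each invert $z$ modulo the center; this forces $\D{G} = \gen{z}$ to be cyclic and yields normal forms of precisely the shape appearing in the statement, with one binary choice according to whether a certain power of $y$ is trivial or a power of $x$. The result is a finite family of presentations in a bounded number of integer parameters, in which the two displayed families occur as distinguished sub-families.

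For the ``only if'' direction one first reduces to the case that $H$ also lies in this class, using that $\lvert H \rvert = \lvert G \rvert$, that $H$ is again two-generated (as $H/\Frat(H) \cong G/\Frat(G)$), that $H/\D{H} \cong G/\D{G}$, and that the Jennings (dimension-subgroup) quotients of $H$ and $G$ coincide; these invariants, fed into the classification, pin $H$ down to the list. One then goes through the list and, for each group $G$ that is not isomorphic to a member of the two displayed families, shows that $\F G \cong \F H$ forces $G \cong H$: the orders $\lvert G \rvert$ and $\lvert \D{G} \rvert$ (equivalently the structure of $G/\D{G}$) together with the lower central and Jennings series recover most parameters, and where these do not suffice one invokes the finer invariants available for finite $2$-groups --- those governed by the squaring and fourth-power maps and by small commutator quotients.

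For the two displayed families, fix $n > m > \ell \ge 2$ and let $G$, $H$ be the corresponding groups. That $G \not\cong H$ is detected by a group-theoretic invariant sensitive to the twisted relation --- for instance the number of elements whose order divides $2^m$, or the isomorphism type of a suitable characteristic section. For the algebra isomorphism one seeks units $\tilde a, \tilde c \in \F G$ --- essentially $x$ and $z$ --- and $\tilde b = y\cdot v \in \F G$ with $v$ a unit congruent to $1$ modulo the augmentation ideal, chosen so that $\tilde a, \tilde b, \tilde c$ satisfy all the defining relations of $H$, in particular $\tilde b^{2^m} = \tilde a^{2^m}$ in $\F G$ although $y^{2^m} = 1$ in $G$; once the correction unit $v$ (a variant of the one in \cite{GLMdR22}) is chosen correctly, the resulting surjection $\F H \twoheadrightarrow \F G$ is an isomorphism by a dimension count. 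Finally one checks that the displayed families are pairwise non-isomorphic for distinct admissible parameters and not isomorphic to any non-exceptional group in the list, so that the preceding steps yield the stated dichotomy.

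The main obstacle is twofold. One is the reduction forcing $H$ into the class: the isomorphism type of the central quotient is not a priori a modular invariant, so this has to be squeezed out of the group-algebra invariants together with the explicit list rather than assumed. The other is the invariant bookkeeping across the parameter families, where some families are close enough that only the refined $2$-group invariants separate them. By contrast, in keeping with the ``simple algebra isomorphisms'' announced in the abstract, the construction of the algebra isomorphism for the exceptional pairs should be comparatively transparent once the right correction unit is in hand.
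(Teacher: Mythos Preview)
Your broad outline---classify the groups into a finite list of parametrized families, reduce $H$ to the same class via algebra invariants, then separate the families by invariants and exhibit an explicit algebra isomorphism for the exceptional pair---matches the paper's architecture. Two points deserve sharpening, and one of them is a genuine gap.

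\textbf{The genuine gap.} You write that ``the orders $\lvert G\rvert$ and $\lvert\D{G}\rvert$ \dots\ together with the lower central and Jennings series recover most parameters, and where these do not suffice one invokes the finer invariants available for finite $2$-groups---those governed by the squaring and fourth-power maps and by small commutator quotients.'' The paper carries out exactly this programme (centers, K\"ulshammer's class-sum count, Quillen's rank, kernel sizes of the $2^m$-power map on $\Delta/\Delta^2$) and finds that it leaves four pairs standing when $n>m>\ell$ or $n=m\ge 2$: besides the genuine counterexample $\DG{1}/\DG{2}$, the pairs $\DG{3}/\DG{4}$ and $\DG{5}/\DG{6}$ survive. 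The authors report that the full battery of group-theoretical invariants in the package \textsf{ModIsomExt} does \emph{not} separate these, so ``small commutator quotients'' and power-map kernels of the kind you invoke will not finish the job. What the paper does instead is new: assuming an abstract group base $H=\langle a,b,c\rangle$ inside $\F G$, it expands $A=a-1$ and $B=b-1$ against a Jennings basis, uses centrality of $A^2,B^2$ to kill the $XY$-coefficients, approximates $C$ modulo $\Delta^4$, and then pushes $C^{2^{\ell-1}}$ through a carefully designed $2^{\ell-1}$-power map on $\Gamma=\Delta(\D{G})\F G$ to force $\alpha_A\beta_B+\alpha_B\beta_A=0$, a contradiction. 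This ``group base approximation'' is the technical core of the positive direction and has no counterpart in your proposal; without it, the pairs $\DG{3}/\DG{4}$ and $\DG{5}/\DG{6}$ remain undecided.

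\textbf{Smaller points.} For the reduction of $H$ to the class you correctly flag that the central quotient is not obviously an invariant; the paper does not squeeze it from Jennings data but quotes \cite[Corollary~2.8, Lemma~5.9]{MSS23} to get that $\lvert H:\Z(H)\rvert$ and $\lvert H:\D{H}\Z(H)\rvert$ are invariants, which together with two-generation characterizes ``dihedral central quotient''. For the algebra isomorphism, your multiplicative ansatz $\tilde b = y\cdot v$ is formally equivalent to what the paper does, but the paper's choice is \emph{additive}: $x\mapsto a$, $y\mapsto b+a+1$. In characteristic~$2$ this makes all the power relations (e.g.\ $y^{2^m}=1$) verifiable by one Frobenius step plus the observation that $ab+abc$ is a class sum, and no actual commutator of nontrivial units needs to be computed; this is precisely the ``simple algebra isomorphism'' advertised. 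Finally, for $G\not\cong H$ the paper uses the exponent of $\CC_G(\D{G})$ rather than an element count.
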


The known counterexamples to the modular isomorphism problem presented in~\cite{GLMdR22} correspond to the case $\ell = 2$.
It should be emphasized, however, that our algebra isomorphisms are simpler than the original ones and it makes the proof cleaner.
This is largely due to the change of presentations of the groups, which is natural in our class.

For the proof of \cref{main:A}, we first give presentations of the groups in our target class.
We then establish an isomorphism between the algebras of the above groups in \cref{thm:IsomorphicAlgebras}.
The proof of this isomorphism can be read independently of the rest of this article,
as it only needs the presentations of the groups.
We remark that the proof can not be carried over to odd primes,
or at least not in the most naive way (cf. \cref{rmk:OddPrimes}).
Finally, we prove that no more negative solutions exist within the class of two-generated finite $2$-groups with dihedral central quotient.
This is the most challenging part of the article, and what can be considered standard methods to attack the problem turn out to be insufficient to prove this.
We hence introduce a procedure that could be called a ``group base approximation''.
It allows us, after considerable effort, to solve the modular isomorphism problem completely within our class.
This procedure could also be used for other classes of groups with some adjustments.

As a byproduct of identifying negative solutions, we obtain the classification of two-generated finite $2$-groups with dihedral central quotient.
Since such a classification of groups might be also of independent interest, we provide a direct proof in~\cref{sec:Appendix}.

\section{Preliminaries}
We use standard group-theoretical notation.
We always write $G$ and $H$ for finite $p$-groups for some prime~$p$.
In most cases, $p = 2$ holds.
The center of $G$ is denoted by $\Z(G)$, the derived subgroup by $\D{G}$ and the Frattini subgroup by $\Frat(G)$.
By $\Omega(G)$ we mean the subgroup of $G$ generated by the elements of order~$p$.
For a non-negative integer $r$ we write $\Agemo_r(G)$ for the subgroup of $G$ generated by $g^{p^r}$, $g \in G$.
For $g, h \in G$ we let $[g, h] = g^{-1}h^{-1}gh$ denote the commutator.
Accordingly we conjugate as $g^h = h^{-1}gh$.
We write a cyclic group of order $n$ as $\Cyc{n}$.

Let $\F$ denote a field of characteristic~$p$ and $\F G$ the group algebra of $G$ over $\F$.
We write $\Delta(G)$ for the augmentation ideal in the group algebra $\F G$.
Note that $\Delta(G)$ is nilpotent and equal to the Jacobson radical of $\F G$.
In particular, the complement of $\Delta(G)$ equals the set of units in $\F G$.
The center of an algebra $\Lambda$ over $\F$ is denoted by $\Z(\Lambda)$, and
the linear subspace spanned by the Lie commutators is denoted by $[\Lambda, \Lambda]$.
We note that $\Delta(\D{G})\F G$ is the relative augmentation ideal of the derived subgroup
and equal to the smallest ideal $[\F G, \F G]\F G$ of $\F G$ with commutative quotient.
Moreover, we let $\Agemo_r(\Lambda)$ be the subalgebra of $\Lambda$ generated by $T^{p^r}$, $T \in \Lambda$.

The center of $\F G$ has a special decomposition which we will use frequently.
Namely $\Z(\F G) = \F\Z(G) \oplus (\Z(\F G) \cap [\F G, \F G])$.
Moreover, $\Z(\F G) \cap [\F G, \F G]$ coincides with the linear subspace of $\Z(\F G)$ spanned by the class sums of non-central elements of $G$.
We refer to~\cite[Section III.6]{Seh78} for more details and proofs.
We will allude to these facts in \cref{sec:PositiveSolutions} without further mention.

A property of $G$ is an \emph{invariant} if $\F G \cong \F H$ implies the same property for $H$.
A subset of $\F G$ is called \emph{canonical} if it is stable under all algebra automorphisms.

\section{Presentations of groups}

In this section, we give presentations of two-generated finite $2$-groups with dihedral central quotient.
Compared to many other classes of finite $p$-groups, this class turns out to be rather accessible.

\begin{thm}\label{thm:Presentations}
    Let $G$ be a two-generated finite $2$-group with dihedral central quotient.
    Then $G/\D{G} \cong \Cyc{2^n} \times \Cyc{2^m}$ and $\D{G} \cong \Cyc{2^\ell}$ for some positive integers $n$, $m$ and $\ell$ with $n \ge m$ and $\ell \ge 2$,
    and $G$ is isomorphic to a group generated by $x$, $y$ and $z$ with one of the following defining relations.
    \begin{align*}
        \dice{1}: & & x^{2^n} &= 1,                \quad y^{2^m} = 1,                        & z^{2^\ell} &= 1, & [y, x] &= z, & [z, x] &= z^{-2}, & [z, y] &= z^{-2}; \\
        \dice{2}: & & x^{2^n} &= 1,                \quad y^{2^m} = x^{2^m},                  & z^{2^\ell} &= 1, & [y, x] &= z, & [z, x] &= z^{-2}, & [z, y] &= z^{-2}; \\
        \dice{3}: & & x^{2^n} &= 1,                \quad y^{2^m} = z^{2^{\ell - 1}},         & z^{2^\ell} &= 1, & [y, x] &= z, & [z, x] &= z^{-2}, & [z, y] &= z^{-2}; \\
        \dice{4}: & & x^{2^n} &= 1,                \quad y^{2^m} = x^{2^m} z^{2^{\ell - 1}}, & z^{2^\ell} &= 1, & [y, x] &= z, & [z, x] &= z^{-2}, & [z, y] &= z^{-2}; \\
        \dice{5}: & & x^{2^n} &= z^{2^{\ell - 1}}, \quad y^{2^m} = 1,                        & z^{2^\ell} &= 1, & [y, x] &= z, & [z, x] &= z^{-2}, & [z, y] &= z^{-2}; \\
        \dice{6}: & & x^{2^n} &= z^{2^{\ell - 1}}, \quad y^{2^m} = x^{2^m},                  & z^{2^\ell} &= 1, & [y, x] &= z, & [z, x] &= z^{-2}, & [z, y] &= z^{-2}.
    \end{align*}
\end{thm}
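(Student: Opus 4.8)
The plan is to establish the structural claims ($G/\D{G} \cong \Cyc{2^n} \times \Cyc{2^m}$ and $\D{G} \cong \Cyc{2^\ell}$ with $\ell \ge 2$) first, and then perform a careful generator-and-relator bookkeeping to reduce to the six listed presentations. Since $G$ is two-generated and the central quotient $\bar G = G/\Z(G)$ is dihedral, $\bar G$ is generated by two involutions, so $\bar G \cong D_{2^k}$ for some $k$ (a finite $2$-group with two-generator property whose generators can be chosen of order $2$). First I would pin down $\D{G}$: the derived subgroup of a dihedral $2$-group is cyclic, and since $\D{G}$ maps onto $\D{\bar G}$, one shows $\D{G}$ is itself cyclic — here the fact that $\D{\bar G}$ has index $4$ in $\bar G$ and the natural generators of $\bar G$ lift to generators $x, y$ of $G$ whose commutator $z = [y,x]$ generates $\D{G}$ modulo deeper terms; a standard commutator-collection argument (or a transfer/regular-$p$-group style computation) gives $\D{G} = \gen{z}$ cyclic, say of order $2^\ell$. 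The relations $[z,x] = z^{-2}$ and $[z,y] = z^{-2}$ come from the dihedral action: in $D_{2^k}$ a reflection inverts the rotation subgroup, and lifting this to $G$ one gets that $x$ and $y$ act on $\D{G} = \gen z$ by an automorphism congruent to inversion modulo $\D{\D{G}}$, which for a cyclic $2$-group forces the action $z \mapsto z^{-1+2^{?}}$; identifying the exact power uses that $[y,x] = z$ together with the Hall–Witt identity, and one checks $z^x = z^y = z^{-1}$ is incompatible unless one allows the correction, yielding exactly $[z,x] = [z,y] = z^{-2}$ (equivalently $z^x = z^{-1}$, rewritten). The constraint $\ell \ge 2$ is needed precisely so that $z^{-2} \ne 1$ is possible and the group is genuinely non-abelian with the stated action; the case $\ell = 1$ collapses and is excluded because then $\bar G$ would not be dihedral of the required type.

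Next I would compute $G/\D{G}$. Since $G$ is two-generated, $G/\D{G}$ is a two-generated abelian $2$-group, so $G/\D{G} \cong \Cyc{2^n} \times \Cyc{2^m}$ with $n \ge m$; the images of $x$ and $y$ can be taken as a basis, and $m, n$ are determined by the orders of $x, y$ modulo $\D{G}$. At this point $G$ is generated by $x, y, z$ with the commutator relations fixed, the order of $z$ equal to $2^\ell$, and it remains only to determine the \emph{power relations} $x^{2^n}$ and $y^{2^m}$. These powers lie in $\D{G} = \gen z$ (since $x, y$ have the stated orders mod $\D{G}$), in fact in $\Z(G) \cap \gen z$, which one computes to be $\gen{z^{2}}$ or a subgroup thereof depending on parity — more precisely, because $x$ inverts-ish $z$, the central elements of $\gen z$ form $\gen{z^{2^{\ell-1}}}$ together with possibly $\gen{z^2}$; I would show $x^{2^n}, y^{2^m} \in \gen{z^{2^{\ell-1}}}$, so each is either $1$ or $z^{2^{\ell-1}}$. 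This gives $2 \times 2 = 4$ a priori choices for the pair $(x^{2^n}, y^{2^m})$, but we also have the freedom to replace $x$ by $xz^j$, $y$ by $yz^k$ and to swap roles when $n = m$; using these substitutions together with $(x z^j)^{2^n} = x^{2^n} z^{j \cdot (\text{something})}$ expanded via the known action of $x$ on $z$, one reduces the four cases — and an extra subtlety appears because the substitution can change $y^{2^m}$ by adding a multiple of $x^{2^m}$, which is why case $\dice{2}$ with $y^{2^m} = x^{2^m}$ and case $\dice{4}$ with $y^{2^m} = x^{2^m} z^{2^{\ell-1}}$ show up as genuinely distinct normal forms when $m < n$. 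Collecting the surviving representatives yields exactly the six presentations $\dice{1}$–$\dice{6}$.

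Finally I would verify that each of the six presentations indeed defines a group \emph{with} the asserted properties — i.e., that the relations are consistent (the group does not collapse), that $z$ really has order $2^\ell$ and generates the derived subgroup, and that the central quotient is dihedral of the right size. This is best done by exhibiting each group concretely, for instance as an explicit extension $1 \to \Cyc{2^\ell} \to G \to \Cyc{2^n} \times \Cyc{2^m} \to 1$ with the prescribed $2$-cocycle, or by a faithful matrix/permutation realization, so that one knows $|G| = 2^{n+m+\ell}$ and no unexpected identifications occur; then $\Z(G)$ is computed directly from the relations and $G/\Z(G)$ is checked to be dihedral.

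\textbf{The main obstacle} will be the normal-form reduction in the second-to-last step: tracking how the power values $x^{2^n}$ and $y^{2^m}$ transform under the admissible changes of generators $x \mapsto x z^j$, $y \mapsto y z^k$ (and $x \leftrightarrow y$ when $n = m$), because the exponent of $z$ picked up involves a sum $\sum_{i} z^{(\pm1)^i \cdot (\dots)}$ coming from the non-trivial action of $x$ on $z$, and one must carefully compute these in $\Cyc{2^\ell}$ modulo the relevant powers of $2$ to see which of the four a priori cases genuinely collapse and which remain as the distinct families $\dice{1}$–$\dice{6}$. Getting these congruences exactly right — and correctly identifying when the $\dice{3}/\dice{4}$ versus $\dice{5}/\dice{6}$ distinctions survive versus merge under the $n = m$ symmetry — is where the real care is needed; everything else is comparatively routine $p$-group commutator calculus.
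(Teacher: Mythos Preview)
Your outline has the right structural shape, but there is a genuine gap in the normal-form reduction. You assert that $x^{2^n}, y^{2^m} \in \D{G} = \gen z$ because ``$x, y$ have the stated orders mod $\D{G}$'', and this is unjustified for $y$ and in fact false. The generators $x, y$ are chosen so that their images in $G/\Z(G)$ are involutions; there is no reason the image of $y$ in $G/\D{G}$ should have order exactly $2^m$. Concretely, in $\DG{2}$ with $n > m$ the relation $y^{2^m} = x^{2^m}$ shows that $y$ has order $2^n$ modulo $\D{G}$, and one checks that no replacement of $y$ by another element whose square is central remedies this (e.g.\ $(x^{-1}y)^2 = x^{-2}y^2 z^{-1}$ is not central). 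What actually holds is $y^{2^m} = x^{s2^m} z^{t2^{\ell-1}}$ with an extra integer parameter $0 \le s \le 2^{n-m}-1$; the paper then normalizes $s$ to $0$ or $1$ via the substitution $y \mapsto x^u y$ for \emph{even} $u$ (so that $x^u$ is central and all commutator relations are preserved). Your proposed substitutions $x \mapsto xz^j$, $y \mapsto yz^k$ never touch $s$ and cannot create the $x^{2^m}$ factor you later acknowledge; starting from four a priori cases and then ``reducing'' can only produce fewer cases, never the six required.

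A smaller point: your route to $[z,x] = [z,y] = z^{-2}$ via ``an automorphism congruent to inversion modulo $\D{\D{G}}$'' and the Hall--Witt identity is unnecessarily indirect. Since $y^2$ is central, $1 = [y^2,x] = [y,x]^y[y,x] = z^y z$ gives $z^y = z^{-1}$ immediately, and likewise for $x$; the relation $[z,x] = z^{-2}$ is just a rewriting of $z^x = z^{-1}$.
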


We write
\begin{equation}\label{eq:DiceGroups}
    \DG{1}, \DG{2}, \dotsc, \DG{6}
\end{equation}
for the groups of order $2^{n + m + \ell}$ in \cref{thm:Presentations}
and always use the generators satisfying the above relations, which are written $x, y, z$ or $a, b, c$.

Evidently, $\DG{1} \cong \DG{2}$ and $\DG{3} \cong \DG{4} \cong \DG{5}$ if $n = m$.
We will see in the end that all other pairs of groups are non-isomorphic through investigation of their modular group algebras.
Even though such indirect arguments suffice to prove \cref{main:A}, a more direct proof is also of interest.
Such a proof is presented in \cref{sec:Appendix}.

We also note that the groups \eqref{eq:DiceGroups} include finite $2$-groups of maximal class:
the dihedral, semidihedral and generalized quaternion groups (\cref{prop:MaximalClass}).

\begin{lem}\label{lem:Cyclic}
    Let $G$ be a group with dihedral central quotient of order $2^{\ell + 1}$.
    Then the derived subgroup $\D{G}$ is cyclic of order $2^\ell$.
\end{lem}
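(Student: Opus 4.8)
The plan is to exploit the fact that a dihedral group $D$ of order $2^{\ell+1}$ (in the sense of this article) always contains a cyclic subgroup of index $2$, namely the one generated by the product of the two defining involutions; this subgroup has order $2^{\ell}$. First I would apply this to $D = G/\Z(G)$ and let $C$ be the preimage in $G$ of such a cyclic subgroup, so that $\Z(G) \le C$, the index $[G : C]$ equals $2$, and $C/\Z(G)$ is cyclic of order $2^{\ell}$. Since $\Z(G) \le \Z(C)$, the quotient $C/\Z(C)$ is a quotient of the cyclic group $C/\Z(G)$ and hence cyclic, so $C$ is abelian. Thus $C$ is an abelian subgroup of index $2$ in $G$, and in particular $\D{G} \le C$.

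Next, fixing an element $g \in G \setminus C$, I would study the map $\varphi\colon C \to C$ defined by $\varphi(c) = [c, g]$, which is well defined because $C$ is normal in $G$. The key point is that $\varphi$ is a group homomorphism, which follows from $C$ being abelian via the identity $[c_1 c_2, g] = [c_1, g]^{c_2}[c_2, g]$ together with $[c_1, g] \in C$. From here I would identify the image and kernel of $\varphi$. Its image is normal in $G$ (conjugation by $C$ is trivial on $\operatorname{im}\varphi \le C$, and $\varphi(c)^g = \varphi(c^g)$), and $G/\operatorname{im}\varphi$ is abelian: the images of the generating set $C \cup \{g\}$ pairwise commute, the commuting of the class of $g$ with the class of $c \in C$ being exactly the relation $[c,g] = \varphi(c) \in \operatorname{im}\varphi$. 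Hence $\operatorname{im}\varphi = \D{G}$. On the other hand $\ker\varphi$ consists of those $c \in C$ commuting with $g$; since such a $c$ also commutes with all of $C$ (as $C$ is abelian), it commutes with $\langle C, g\rangle = G$ and so lies in $\Z(G)$, while the reverse inclusion $\Z(G) \subseteq \ker\varphi$ is clear. Therefore $\D{G} = \operatorname{im}\varphi \cong C/\ker\varphi = C/\Z(G) \cong \Cyc{2^{\ell}}$, as claimed.

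I do not expect a serious obstacle in this argument; the step that most deserves care is pinning down the two identifications $\operatorname{im}\varphi = \D{G}$ and $\ker\varphi = \Z(G)$, and both of these rest squarely on $C$ being abelian of index $2$. If one prefers, the equality $\operatorname{im}\varphi = \D{G}$ can instead be deduced from the standard fact that, for an abelian subgroup $C$ of index $2$ and any $g \notin C$, the set of commutators $\{[c, g] : c \in C\}$ is already a subgroup and coincides with $\D{G}$; phrasing it through the homomorphism $\varphi$ merely makes the order count immediate.
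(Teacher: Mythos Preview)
Your argument is correct and is a genuinely different route from the paper's. The paper works directly with generators: it lifts the two reflections of the dihedral quotient to elements $x,y\in G$, sets $z=[y,x]$, uses the centrality of $x^2$ and $y^2$ to show that both $x$ and $y$ invert $z$, deduces $\D{G}=\langle z\rangle$, and then pins down the order of $z$ via the index $[\D{G}\Z(G):\Z(G)]=2^{\ell-1}$ together with $|\D{G}\cap\Z(G)|=2$. You instead lift the cyclic index-$2$ subgroup of the dihedral quotient to an abelian subgroup $C\le G$ and use the commutator homomorphism $\varphi\colon C\to C$, $c\mapsto[c,g]$, to obtain $\D{G}\cong C/\Z(G)$ in one stroke. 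Your approach is cleaner and more structural---it avoids any explicit commutator identities beyond the homomorphism property of $\varphi$, and the order of $\D{G}$ drops out immediately from the first isomorphism theorem rather than from a separate count of $\D{G}\cap\Z(G)$. What the paper's approach buys is that it simultaneously produces the explicit generators $x,y,z$ with the relations $z^x=z^{-1}=z^y$, which are exactly what is needed for the presentations in the next lemma; your argument would require revisiting this if it were used in the paper.
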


\begin{proof}
    First note that $\ell \ge 2$ as $G/\Z(G)$ is non-abelian.
    Since the central quotient $G/\Z(G)$ is isomorphic to the dihedral group of order $2^{\ell + 1}$,
    the quotient $\D{G}\Z(G)/\Z(G)$, which is the derived subgroup of $G/\Z(G)$, has order $2^{\ell - 1}$.
    There are two elements $x$ and $y$ of $G$ such that $x^2$, $y^2$ and $(yx)^{2^\ell}$ are trivial modulo the center $\Z(G)$.
    Moreover, $x$ and $y$ together with the center of $G$ generate the whole group $G$.
    We write $z$ for the commutator $[y, x]$.
    Since $y^2$ is central, we have
    \[
        1 = [y^2, x] = [y, x]^y [y, x] = z^y z
    \]
    and the action of $y$ on $z$ by conjugation equals the inversion.
    The same is true for $x$.
    Then a direct calculation of commutators shows that the derived subgroup $\D{G}$ is a cyclic group generated by $z$.
    Since $z$ is congruent to $(yx)^2$ modulo the center $\Z(G)$, the power $z^{2^{\ell - 1}}$ is central.
    Then $z^{2^{\ell - 1}}$ is equal to its inverse by conjugation.
    This shows that $\D{G} \cap \Z(G)$ has order $2$ and hence $\D{G}$ has order $2^\ell$.
\end{proof}

\begin{lem}\label{lem:Parametrization}
    Let $G$ be a two-generated finite $2$-group with dihedral central quotient.
    Then $G/\D{G} \cong \Cyc{2^n} \times \Cyc{2^m}$ and $\D{G} \cong \Cyc{2^\ell}$
    for some positive integers $n$, $m$ and $\ell$ with $n \ge m$ and $\ell \ge 2$,
    and $G$ is isomorphic to the group
    \[
        G_\theta
        = \Biggl\langle\, x, y, z \Biggm|
            \begin{gathered}
                x^{2^n} = z^{r2^{\ell - 1}},\ y^{2^m} = x^{s2^m} z^{t2^{\ell - 1}},\ z^{2^\ell} = 1, \\
                [y, x] = z,\ [z, x] = z^{-2},\ [z, y] = z^{-2}
            \end{gathered}
        \,\Biggr\rangle
    \]
    for some $\theta = (r, s, t)$ belonging to
    \[
        \Theta = \set{ (r, s, t) \given 0 \le r \le 1,\ \  0 \le s \le 2^{n - m} - 1,\  \ 0 \le t \le 1 }.
    \]
\end{lem}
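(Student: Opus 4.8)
The plan is to locate a generating pair of $G$ in a normalized position, then read off the relations.

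\emph{Step 1 (basic structure).}
By hypothesis $D := G/\Z(G)$ is dihedral, hence of order $2^{\ell+1}$ with $\ell \ge 2$ since it is non-abelian. By \cref{lem:Cyclic}, $\D{G}$ is cyclic of order $2^{\ell}$, and its proof also shows $\D{G} \cap \Z(G)$ has order $2$. The abelianization $G/\D{G}$ is a two-generated abelian $2$-group; it is non-cyclic since it surjects onto $D/\D{D} \cong \Cyc{2} \times \Cyc{2}$, so $G/\D{G} \cong \Cyc{2^{n}} \times \Cyc{2^{m}}$ for some integers $n \ge m \ge 1$, and $|G| = 2^{n+m+\ell}$.

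\emph{Step 2 (choice of generators).}
Since $G$ is two-generated while $D$ is dihedral of order at least $8$, the quotients $G/\Frat(G)$ and $D/\Frat(D)$ are both elementary abelian of rank $2$, which forces $\Z(G) \le \Frat(G)$; consequently any lift to $G$ of a generating pair of $D$ again generates $G$. The group $D$ is generated by two of its reflections, so lifting these yields a generating pair $x_{0}, y_{0}$ of $G$ with $x_{0}^{2}, y_{0}^{2} \in \Z(G)$. Because $\{x_{0}\D{G}, y_{0}\D{G}\}$ generates $G/\D{G}$ and a two-generated abelian $2$-group of exponent $2^{n}$ cannot be generated by two elements of smaller order, one of $x_{0}\D{G}, y_{0}\D{G}$ has order $2^{n}$; rename the corresponding generator $x$ and the other $y_{1}$. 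A cyclic subgroup of maximal order is a direct factor of a finite abelian $p$-group, so $G/\D{G} = \gen{x\D{G}} \times C$ with $C$ cyclic of order $2^{m}$. Writing the image of $y_{1}$ as $(x\D{G})^{\alpha}\bar{c}^{\,\beta}$ for a generator $\bar{c}$ of $C$, minimality of the generating pair forces $\beta$ odd, so after replacing $\bar{c}$ by $\bar{c}^{\,\beta}$ we may assume the image of $y_{1}$ is $(x\D{G})^{\alpha}\bar{c}$. Replacing $y_{1}$ by $y := y_{1}x^{2k}$ for a suitable $k$ --- which changes neither the image of $y_{1}$ in $D$ (as $x^{2} \in \Z(G)$) nor the subgroup generated --- we may arrange that the image of $y$ in $G/\D{G}$ is $(x\D{G})^{s}\bar{c}$ with $s \in \{0, 1\}$; and when $n = m$ a two-element generating set of $\Cyc{2^{n}} \times \Cyc{2^{n}}$ is a basis, so there we may take $s = 0$. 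In every case $s \in \{0, 1, \dotsc, 2^{n-m}-1\}$, and $x, y$ generate $G$ with $x^{2}, y^{2} \in \Z(G)$.

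\emph{Step 3 (reading off the presentation).}
Set $z = [y, x]$. The computation in the proof of \cref{lem:Cyclic}, applied to $x$ and $y$, gives that $x$ and $y$ invert $z$, that $\D{G} = \gen{z}$ with $z^{2^{\ell}} = 1$ and $[z, x] = [z, y] = z^{-2}$, and that $\Z(G) \cap \gen{z} = \gen{z^{2^{\ell-1}}}$. Since $x\D{G}$ has order $2^{n}$, the element $x^{2^{n}}$ lies in $\gen{z}$ and commutes with $x$, which inverts $z$; hence $x^{2^{n}} \in \{1, z^{2^{\ell-1}}\}$, say $x^{2^{n}} = z^{r2^{\ell-1}}$ with $r \in \{0, 1\}$. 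Next, $x^{2^{m}}$ and $y^{2^{m}}$ are central, being powers of the central elements $x^{2}$ and $y^{2}$, and the image of $y^{2^{m}}x^{-s2^{m}}$ in the abelian group $G/\D{G}$ equals $\bigl((x\D{G})^{s}\bar{c}\bigr)^{2^{m}}(x\D{G})^{-s2^{m}} = \bar{c}^{\,2^{m}} = 1$; so $y^{2^{m}}x^{-s2^{m}}$ is a central element of $\gen{z}$, whence $y^{2^{m}}x^{-s2^{m}} = z^{t2^{\ell-1}}$ with $t \in \{0, 1\}$, i.e.\ $y^{2^{m}} = x^{s2^{m}}z^{t2^{\ell-1}}$. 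Thus $x, y, z$ satisfy the defining relations of $G_{\theta}$ for $\theta = (r, s, t) \in \Theta$ and generate $G$, giving an epimorphism $G_{\theta} \twoheadrightarrow G$. Conversely $\gen{z} \trianglelefteq G_{\theta}$ has order at most $2^{\ell}$, and $G_{\theta}/\gen{z}$ is the abelian group defined by $x^{2^{n}} = 1$ and $y^{2^{m}} = x^{s2^{m}}$, which has order $2^{n+m}$ (the index corresponding to the relation matrix of determinant $2^{n+m}$); hence $|G_{\theta}| \le 2^{n+m+\ell} = |G|$, so the epimorphism is an isomorphism.

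The main obstacle is Step 2: one must produce a single generating pair that is simultaneously a lift of two reflections of $D$ --- so that \cref{lem:Cyclic} delivers the commutator relations --- and is normalized so that $x\D{G}$ has maximal order with the image of $y$ lying in $(x\D{G})^{\{0,1\}}C$. Once that is arranged, Step 3 is a direct calculation and the order estimate for $G_{\theta}$ is routine coset counting.
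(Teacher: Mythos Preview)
Your proof is correct and follows essentially the paper's approach: lift two reflections from the dihedral quotient, invoke \cref{lem:Cyclic} for the commutator relations, arrange that $x\D{G}$ has order $2^n$, and read off the power relations from the fact that $\D{G} \cap \Z(G)$ has order $2$. Your adjustment $y_1 \mapsto y_1 x^{2k}$ to force $s \in \{0,1\}$ is not needed here---the paper simply observes that $\bar{y}^{2^m} = \bar{x}^{s 2^m}$ with $s$ taken modulo $2^{n-m}$, without modifying $y$---though the extra step does no harm and in fact anticipates part of \cref{lem:Reduction}.
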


\begin{proof}
    By \cref{lem:Cyclic}, the derived subgroup $\D{G}$ must be cyclic and the central quotient $G/\Z(G)$ must have order $2^{\ell + 1}$.
    From the proof of \cref{lem:Cyclic}, there are three elements $x$, $y$ and $z$ of $G$ such that $x$ and $y$ generate $G$ and
    \[
        [y, x] = z, \ z^{2^\ell} = 1,\ [z, x] = z^{-2},\ [z, y] = z^{-2}.
    \]
    We may assume that $x^{2^n}$ belongs to $\D{G}$, while $x^{2^{n - 1}}$ does not.
    (Otherwise replace $x$, $y$ and $z$ by $y$, $x$ and $z^{-1}$).
    Since $x^2$ is central, we see that $x^{2^n}$ belongs to $\D{G} \cap \Z(G)$ which is a cyclic group of order $2$ generated by $z^{2^{\ell - 1}}$.
    Hence
    \[
        x^{2^n} = z^{r 2^{\ell - 1}}
    \]
    for some $0 \le r \le 1$.

    Let $\bar{G}$ denote the abelianization $G/\D{G}$.
    From~\cite[Theorem 7.12]{Isa09} there is an element $w$ of $G$ with $\bar{G} = \langle \bar{x} \rangle \times \langle \bar{w} \rangle$,
    in particular $\bar{w}$ has order $2^m$.
    Then $\bar{y}^{2^m} = \bar{x}^{s2^m}$ for some $0 \le s \le 2^{n - m} - 1$.
    Since $y^{-2^m} x^{s 2^m }$ belongs to $\D{G} \cap \Z(G)$, we obtain
    \[
        y^{2^m} = x^{s 2^m} z^{t 2^{\ell - 1}}
    \]
    for some $0 \le t \le 1$.

    The six relations above define a finite group of order $2^{n + m + \ell}$ and yield a presentation of $G$.
\end{proof}

We consider a partition of the parameter space introduced in \cref{lem:Parametrization} as $\Theta = \Theta_{\scriptsizedice{1}} \cup \dotsb \cup \Theta_{\scriptsizedice{6}}$ defined by the following.

\begin{align*}
    \Theta_{\scriptsizedice{1}} &= \set{ (r, s, t) \in \Theta \given r = 0,\ t = 0,\ s \equiv 0 \mod 2}; \\
    \Theta_{\scriptsizedice{2}} &= \set{ (r, s, t) \in \Theta \given r = 0,\ t = 0, \ s \equiv 1 \mod 2}; \\
    \Theta_{\scriptsizedice{3}} &= \set{ (r, s, t) \in \Theta \given r = 0,\ t = 1,\ s \equiv 0 \mod 2}; \\
    \Theta_{\scriptsizedice{4}} &= \set{ (r, s, t) \in \Theta \given r = 0,\ t = 1,\ s \equiv 1 \mod 2}; \\
    \Theta_{\scriptsizedice{5}} &= \begin{cases}
        \set{ (r, s, t) \in \Theta \given r = 1,\ s \equiv 0 \mod 2}                                & (n > m) \\
        \set{ (r, s, t) \in \Theta \given r = 1,\ s \equiv 0 \mod 2} \smallsetminus \{ (1, 0, 1) \} & (n = m);
    \end{cases} \\
    \Theta_{\scriptsizedice{6}} &= \begin{cases}
        \set{ (r, s, t) \in \Theta \given r = 1,\ s \equiv 1 \mod 2}                      & (n > m) \\
        \set{ (r, s, t) \in \Theta \given r = 1,\ s \equiv 1 \mod 2} \cup \{ (1, 0, 1) \} & (n = m).
    \end{cases}
\end{align*}

\begin{lem}\label{lem:Reduction}
    Let $n$, $m$ and $\ell$ be positive integers with $n \ge m$ and $\ell \ge 2$, and let $\theta = (r, s, t) \in \Theta$.
    Let $s_0 \equiv s \mod 2$ with $s_0 \in \{0,1\}$.
    Then
    \[
        G_\theta \cong G_{(r, s_0, (1 - r)t)}
    \]
    except the case $n = m$ and $\theta = (1, 0, 1)$.
    In the exceptional case,
    \[
        G_\theta \cong G_{(1, 1, 0)}.
    \]
    In particular, we have $G_{\scriptsizetextdice{\textnormal{?}}} \cong G_\theta$ if $\theta \in \Theta_{\scriptsizetextdice{\textnormal{?}}}$
    for $\textdice{\textnormal{?}} \in \{\dice{1}, \dotsc, \dice{6}\}$.
\end{lem}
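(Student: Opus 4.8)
The plan is to obtain each claimed isomorphism from a single kind of change of generators inside $G_\theta$: keep $x$ and $z$, and replace $y$ by $y'=yx^{-k}$ for a carefully chosen \emph{even} integer $k$. From $[z,x]=z^{-2}$ and $[z,y]=z^{-2}$ one reads off $z^x=z^{-1}=z^y$, and a short computation then shows that $x^2$ is central in $G_\theta$. Hence for $k$ even the element $x^k=(x^2)^{k/2}$ is central, so $(y')^{2^m}=y^{2^m}x^{-k2^m}$, while a commutator calculation gives $[y',x]=[y,x]^{x^{-k}}=z^{(-1)^k}=z$ and $[z,y']=z^{-1}z^{y'}=z^{-2}$ --- the parity of $k$ being precisely what makes the relation $[y',x]=z$ persist. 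Thus $x,y',z$ satisfy $z^{2^\ell}=1$, $[y',x]=z$, $[z,x]=z^{-2}$, $[z,y']=z^{-2}$ and the untouched relation $x^{2^n}=z^{r2^{\ell-1}}$, together with the modified power relation $(y')^{2^m}=y^{2^m}x^{-k2^m}$. Since $x$ and $y'$ still generate $G_\theta$, sending the standard generators of the group presented by this modified data to $x,y',z$ defines a surjection onto $G_\theta$; the modified parameter triple again lies in $\Theta$, so that group has order $2^{n+m+\ell}=|G_\theta|$ by \cref{lem:Parametrization} and the surjection is an isomorphism.

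With this tool I carry out the reductions. Applying the move with $k=s-s_0$, which is even since $s\equiv s_0\bmod 2$, turns the power relation into $(y')^{2^m}=x^{s2^m}z^{t2^{\ell-1}}x^{-(s-s_0)2^m}=x^{s_0 2^m}z^{t2^{\ell-1}}$, so $G_\theta\cong G_{(r,s_0,t)}$; this is already the assertion whenever $r=0$ or $t=0$, as then $(1-r)t=t$. When $r=1$, $t=1$ and $n>m$, I apply the move once more with $k=2^{n-m}$, even precisely because $n>m$: now $(y')^{2^m}=x^{s_0 2^m}z^{2^{\ell-1}}x^{-2^n}=x^{s_0 2^m}z^{2^{\ell-1}}z^{-2^{\ell-1}}=x^{s_0 2^m}$, using $x^{2^n}=z^{2^{\ell-1}}$, hence $G_{(1,s_0,1)}\cong G_{(1,s_0,0)}$ and altogether $G_\theta\cong G_{(1,s_0,0)}=G_{(r,s_0,(1-r)t)}$. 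This settles every case with $n>m$; when $n=m$ the parameter space forces $s=s_0=0$, and among the four tuples $(0,0,0),(0,0,1),(1,0,0),(1,0,1)$ only $\theta=(1,0,1)$ is not already of the form $(r,s_0,(1-r)t)$.

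In the exceptional case $n=m$, $\theta=(1,0,1)$, the move with $k=2^{n-m}=1$ is blocked by the parity requirement, and the naive reduction would wrongly assert $G_{(1,0,1)}\cong G_{(1,0,0)}$. Instead I observe that when $n=m$ the presentations of $G_{(1,0,1)}$ and $G_{(1,1,0)}$ coincide: in $G_{(1,1,0)}$ one has $y^{2^m}=x^{2^m}=x^{2^n}=z^{2^{\ell-1}}$, which is the power relation of $G_{(1,0,1)}$, and symmetrically $G_{(1,0,1)}$ yields $y^{2^m}=z^{2^{\ell-1}}=x^{2^n}=x^{2^m}$; so the two relator sets have the same normal closure and $G_{(1,0,1)}=G_{(1,1,0)}$.

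Finally, the ``in particular'' is a matter of matching definitions: for each $\textdice{\textnormal{?}}$ the set $\Theta_{\scriptsizetextdice{\textnormal{?}}}$ consists exactly of those $\theta\in\Theta$ whose reduced triple $(r,s_0,(1-r)t)$ --- or, in the single exceptional instance $(1,0,1)\in\Theta_{\scriptsizedice{6}}$ with $n=m$, the triple $(1,1,0)$ --- is the defining triple of the corresponding one among $\DG{1},\dotsc,\DG{6}$, so the claim follows from what has been proved. The step I expect to be the crux is isolating this one generator move with the correct parity of $k$, and especially spotting the choice $k=2^{n-m}$ that trades the ``$z^{2^{\ell-1}}$-twist'' in the relation for $y^{2^m}$ against a shift of the $x$-exponent: this is exactly where the hypothesis $n>m$ enters, and the breakdown of this trade when $n=m$ is what produces the exceptional case.
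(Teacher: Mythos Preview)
Your proof is correct and follows essentially the same approach as the paper: both arguments replace $y$ by $y$ times an even power of $x$ (which is central), recomputing only the power relation for $y$. The paper does this in a single substitution $y\mapsto x^uy$ with $u$ chosen so that $s+u=s_0+r(2-t)2^{n-m}$, whereas you split the move into two transparent steps, first normalizing $s$ to $s_0$ and then, when $r=1$, $t=1$, $n>m$, trading the factor $z^{2^{\ell-1}}$ for $x^{-2^n}$; the exceptional case is handled identically in both proofs.
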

\begin{proof}
    Assume that $n > m$ or $\theta \ne (1, 0, 1)$ so that $r(2 - t) 2^{n - m}$ is even.
    Note here that $n = m$ implies $s = 0$.

    Take an even number~$u$ such that $s + u = s_0 + r(2 - t)2^{n - m}$.
    Then $x^u$ is central and
    \begin{align*}
        (x^u y)^{2^m}
        &= x^{u 2^m} y^{2^m}
        =  x^{u 2^m} x^{s 2^m} z^{t 2^{\ell - 1}}
        =  x^{(s + u) 2^m} z^{t 2^{\ell - 1}} \\
        &= x^{s_0 2^m} x^{r(2 - t) 2^n} z^{t 2^{\ell - 1}}
        =  x^{s_0 2^m} z^{r^2(2 - t) 2^{\ell - 1}} z^{t 2^{\ell - 1}} \\
        &= x^{s_0 2^m} z^{r(2 - t) 2^{\ell - 1}} z^{t 2^{\ell - 1}}
        =  x^{s_0 2^m} z^{(1 - r)t 2^{\ell - 1} }.
    \end{align*}
    Hence $G_\theta = \langle x, x^u y, z \rangle$ enjoys the defining relations of $G_{(r, s_0, (1 - r)t)}$.

    If $n = m$ and $\theta = (1, 0, 1)$, then
    \(
        y^{2^m} = z^{2^{\ell - 1}} = x^{2^n} = x^{2^m}
    \)
    and $G_\theta = \langle x, y, z \rangle$ enjoys the defining relations of $G_{(1, 1, 0)}$.
\end{proof}

\begin{proof}[Proof of \cref{thm:Presentations}]
    It follows from \cref{lem:Cyclic,lem:Parametrization,lem:Reduction}.
\end{proof}

\section{Isomorphic modular group algebras}

This section is devoted to proving that groups $\DG{1}$ and $\DG{2}$ with $n > m > \ell \ge 2$
have isomorphic group algebras over an arbitrary field of characteristic~$2$, but are non-isomorphic groups.
In the next section, we will see that these are the only counterexamples to the modular isomorphism problem
within the class of two-generated finite $2$-groups with dihedral central quotient.

Arguably the most interesting part of this article is the following, which generalizes the negative solutions to the modular isomorphism problem presented in~\cite{GLMdR22}.

\begin{thm}\label{thm:IsomorphicAlgebras}
    Let $n \ge m > \ell \ge 2$ and $\F$ a field of characteristic~$2$.
    Then $\F \DG{1} \cong \F \DG{2}$.
\end{thm}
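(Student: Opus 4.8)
The plan is to construct an explicit $\F$-algebra isomorphism $\varphi\colon\F\DG{1}\to\F\DG{2}$. Since $\F\DG{1}$ carries the same presentation as the group $\DG{1}$ (in a group algebra of a finite group, every image of a generator satisfying the power relations is automatically a unit), a homomorphism $\varphi$ is determined by prescribing elements $\alpha,\beta,\gamma\in\F\DG{2}$ to play the roles of $x,y,z$ and verifying that they satisfy the six defining relations of $\DG{1}$. As $\dim_\F\F\DG{1}=2^{n+m+\ell}=\dim_\F\F\DG{2}$, it then suffices to prove that $\varphi$ is surjective.

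I would set $\alpha=a$ and $\gamma=[\beta,\alpha]$, so that $[y,x]=z$ holds by construction and $\alpha^{2^n}=a^{2^n}=1$ is inherited; the whole problem is the choice of $\beta$. A first, purely group-theoretic attempt is $\beta=ba^{-1}$: a direct computation shows that this satisfies every defining relation of $\DG{1}$ except $[z,y]=z^{-2}$, where one obtains the trivial commutator instead of $\gamma^{-2}$ --- consistently with $\DG{1}\not\cong\DG{2}$ for $n>m$, which is proved later. Hence $\beta$ must be a genuine unit rather than a group element, and I would take $\beta=bv$ with
\[
    v=a^{-1}\bigl(1+(1+c^{2^{\ell-1}})\,\psi\bigr),
\]
where $c^{2^{\ell-1}}$ is the central involution of $\DG{2}$ and $\psi\in\F\DG{2}$ is a correction term to be pinned down during the verification. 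Note that $v$ is automatically a unit, because $1+c^{2^{\ell-1}}$ lies in the augmentation ideal $\Delta(\DG{2})$.

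The key relation is $\beta^{2^m}=1$. Since $b^2$ is central one has $\beta^{2^m}=(bv)^{2^m}=b^{2^m}(v^{b}v)^{2^{m-1}}=a^{2^m}(v^{b}v)^{2^{m-1}}$. Already with $\psi=0$ this works: $v^{b}v=c\,a^{-2}$, and $(c\,a^{-2})^{2^{m-1}}=c^{2^{m-1}}a^{-2^m}=a^{-2^m}$ because the hypothesis $m>\ell$ forces $c^{2^{m-1}}=1$; thus $(ba^{-1})^{2^m}=a^{2^m}a^{-2^m}=1$. The work is to choose $\psi$ so that the extra terms carrying a factor $(1+c^{2^{\ell-1}})$ appearing in $v^{b}v$ still contribute trivially after one raises to the $2^{m-1}$-th power; here one uses repeatedly the characteristic-$2$ identity $(1+\nu)^{2^k}=1+\nu^{2^k}$ together with $(1+c^{2^{\ell-1}})^2=0$, and this is exactly the point that does not survive to odd primes, where $1+c^{2^{\ell-1}}$ is not even nilpotent. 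Simultaneously one checks $z^{2^\ell}=1$, $[z,x]=z^{-2}$ and $[z,y]=z^{-2}$: after the correction, $\gamma=[\beta,\alpha]$ is no longer a group element, and one must verify that it is inverted under conjugation by $\alpha$ and by $\beta$ and has order $2^\ell$ --- a finite computation inside $\F\DG{2}$.

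Finally, surjectivity is routine. Both $\F\DG{1}$ and $\F\DG{2}$ are local with maximal ideals the augmentation ideals, $\varphi$ maps one into the other, and the difference between $\beta$ and $ba^{-1}$ lies in $\Delta(\DG{2})^2$, since $1+c^{2^{\ell-1}}\in\Delta(\DG{2})^2$. Therefore, modulo $\Delta(\DG{2})^2$, the images $\varphi(x-1)=a-1$ and $\varphi(y-1)\equiv(b-1)+(a-1)$ span the same $2$-dimensional space as $a-1$ and $b-1$, so $\varphi$ is onto modulo the square of the radical, and a standard Nakayama-type induction on the nilpotency of the radical then yields surjectivity. I expect the genuine difficulty to be the calibration of $\psi$: it must make $\beta$ of order exactly $2^m$ \emph{and} force $[z,y]=z^{-2}$, and reconciling these two requirements is precisely what rules out the group-element choice and produces the characteristic-$2$-specific correction generalizing the isomorphisms of~\cite{GLMdR22}.
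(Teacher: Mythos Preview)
Your overall strategy---choose units $\alpha,\beta,\gamma\in\F\DG{2}$ satisfying the relations of $\DG{1}$ and then check surjectivity---is exactly the right one and matches the paper. The problem is the specific ansatz $\beta=ba^{-1}\bigl(1+(1+c^{2^{\ell-1}})\psi\bigr)$: it is not merely left incomplete, it \emph{cannot} be completed when $\ell\ge 3$.

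Here is why. In $\DG{1}$ the element $y^2$ is central, so for any isomorphism $\F\DG{1}\to\F\DG{2}$ the image $\beta^2$ must lie in $\Z(\F\DG{2})$. Write $w=c^{2^{\ell-1}}$; then $(1+w)$ is central with $(1+w)^2=0$, so your $\epsilon=(1+w)\psi$ satisfies $\epsilon^2=0$ regardless of $\psi$. A direct expansion gives
\[
\beta^2=(ba^{-1})^2+(1+w)\cdot R
\]
for some $R\in\F\DG{2}$ depending on $\psi$, and $(ba^{-1})^2=b^2a^{-2}c$. Centrality of $\beta^2$ therefore forces the Lie bracket $[b^2a^{-2}c,\,a]$ to lie in $(1+w)\F\DG{2}$. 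Since $b^2a^{-2}$ is a central unit and $ca=ac^{-1}$, one computes $[b^2a^{-2}c,\,a]=b^2a^{-1}(c+c^{-1})=b^2a^{-1}c^{-1}(1+c)^2$. Now $(1+w)\F\DG{2}=(1+c)^{2^{\ell-1}}\F\DG{2}$ is the kernel of the projection $\F\DG{2}\to\F[\DG{2}/\langle w\rangle]$, and $(1+c)^2$ lies in that kernel iff $\bar c^2=1$ in $\DG{2}/\langle w\rangle$, i.e.\ iff $\ell\le 2$. So for $\ell\ge 3$ no choice of $\psi$ can make $\beta^2$ central, and your surjective homomorphism (which, being between algebras of equal dimension, would be an isomorphism) cannot exist in this form. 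Your ansatz is tailored to the $\ell=2$ case of~\cite{GLMdR22}, and the whole point here is to go beyond it.

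The paper's choice is not a perturbation of a group element: one takes $y=b+a+1$, an \emph{additive} mixture. Then $y^2=b^2+a^2+1+ab+abc$ is central because $\{ab,abc\}$ is a conjugacy class, so $ab+abc$ is a class sum; this yields $[z,y]=z^{-2}$ immediately. The relation $y^{2^m}=1$ then drops out of $b^{2^m}=a^{2^m}$ together with $c^{2^{m-1}}=1$ (using $m>\ell$), and $z^{2^\ell}=1$ follows from $(xy+yx)^{2^\ell}=(ab+ba)^{2^\ell}=(ab)^{2^\ell}(1+c)^{2^\ell}=0$. Surjectivity is then trivial since $a=x$ and $b=y+x+1$. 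The moral is that the successful $\beta$ has to differ from any group element already at the first layer of the augmentation filtration, not deep down where $(1+w)$ lives.
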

\begin{proof}
    Let $G = \DG{1}$ and $H = \DG{2} = \langle a, b, c \rangle$.
    We construct an algebra homomorphism from $\F G$ to $\F H$ and then show that it is bijective.
    Given a group homomorphism from $G$ to the unit group of $\F H$,
    we can extend it and obtain an algebra homomorphism from $\F G$ to $\F H$ by~\cite[Lemma 1.1.7]{Pas77}.
    To obtain such a group homomorphism, it suffices to show that the units
    \[
        x = a,\ y = b + a + 1,\ z = [y, x] \in \F H \smallsetminus \Delta(H)
    \]
    satisfy the defining relations for $G$ by~\cite[Proposition 4.3]{Joh97}.

    We first verify the commutator relations.
    Evidently $[y, x] = z$.
    As $x^2$ is central in $\F H$, the basic commutator identity $[y, x^2] = [y, x] [y, x]^x$ yields $[z, x] = z^{-2}$.
    Now use $ba = abc$ to see
    \[
        y^2 = b^2 + a^2 + ab + abc + 1.
    \]
    Observe that $(ab)^a = abc = (ab)^b$ and $(abc)^a = ab = (abc)^b$.
    Hence $\{ ab, abc \}$ is a conjugacy class of $H$ and the class sum $ab + abc$ is central in $\F H$.
    Thus so is $y^2$ which yields $[z, y] = z^{-2}$ as before.

    We proceed to the power relations.
    Clearly $x^{2^n} = 1$.
    As $ab$ commutes with $c$, it is also easy to raise $y$ to a power:
    \begin{align*}
        y^{2^m}
        &= (b^2 + a^2 + ab + abc + 1)^{2^{m - 1}} \\
        &= b^{2^m} + a^{2^m} + (ab)^{2^{m - 1}}(1 + c^{2^{m - 1}}) + 1.
    \end{align*}
    The first two terms vanish as $b^{2^m} = a^{2^m}$, and the third term vanishes as $c^{2^\ell} = 1$ and $m > \ell$.
    Thus $y^{2^m} = 1$.
    Finally, use $yx = xyz$ and $ba = abc$ to see
    \[
        xy(1 + z) = xy + yx = ab + ba = ab(1 + c).
    \]
    We raise both sides to the power of $2^\ell$.
    Since $ab$ commutes with $c$, the right hand side becomes
    $(ab)^{2^\ell}(1 + c^{2^\ell}) = 0$.
    On the other hand, $xy$ commutes with $z$ as
    \[
        [z, xy] = [z, y][z, x]^y = z^{-2} z^2 = 1.
    \]
    Hence the left hand side becomes
    $(xy)^{2^\ell}(1 + z^{2^\ell})$.
    As $xy$ is a unit, we obtain $z^{2^\ell} = 1$.
    Therefore all relations are satisfied and we obtain a suitable algebra homomorphism.

    Since $a = x$ and $b = y + x + 1$ belong to the image of our algebra homomorphism, it is surjective.
    As $G$ and $H$ have the same order, it is bijective.
\end{proof}

\begin{lem}\label{lem:G1vsG2}
    Let $n > m > \ell \ge 2$.
    Then $\DG{1} \not\cong \DG{2}$.
\end{lem}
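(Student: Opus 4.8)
Write $G = \DG{1} = \langle x, y, z\rangle$ and $H = \DG{2} = \langle a, b, c\rangle$. Many crude invariants of the two groups agree (the order, the abelianization $\Cyc{2^n}\times\Cyc{2^m}$, the centre, the Frattini quotient, the exponent, even the number of elements of each order), so the plan is to distinguish them by the isomorphism type of their abelian maximal subgroups. Concretely, I shall show that $G$ has an abelian maximal subgroup containing an element of order $2^n$, whereas $H$ has none. Both groups are two-generated, so $G/\Frat(G)$ and $H/\Frat(H)$ are elementary abelian of order $4$ and each group has exactly three maximal subgroups, namely $\langle w\rangle\Frat$ for $w$ one of the two distinguished generators or their product; here $\Frat(G) = \langle x^2, y^2, z\rangle$ and $\Frat(H) = \langle a^2, b^2, c\rangle$.

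For $G$ I would take $M = \langle xy, x^2, z\rangle$. From $yx = xyz$ and $zy = yz^{-1}$ one gets $(xy)^2 = x^2y^2z^{-1}$, whence $M = \langle xy\rangle\Frat(G)$ and $M$ is maximal. Moreover $M$ is abelian: $x^2$ is central and $[z, xy] = [z,y]\,[z,x]^y = z^{-2}z^2 = 1$. Finally $xy$ has order $2^n$: raising $(xy)^2 = x^2y^2z^{-1}$ to the power $2^{n-2}$ and using $y^{2^{n-1}} = 1$ (valid since $n-1 \ge m$) and $z^{2^{n-2}} = 1$ (valid since $n - 2 \ge \ell$) gives $(xy)^{2^{n-1}} = x^{2^{n-1}} \ne 1$ and $(xy)^{2^n} = 1$. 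So $M$ is an abelian maximal subgroup of $G$ with an element of order $2^n$.

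For $H$ the three maximal subgroups are $\langle a, b^2, c\rangle$, $\langle b, a^2, c\rangle$ and $M' = \langle ab, a^2, c\rangle = \langle ab\rangle\Frat(H)$. The first two are non-abelian, since they contain $[c,a] = c^{-2} \ne 1$ and $[c,b] = c^{-2} \ne 1$ respectively (using $\ell \ge 2$). The subgroup $M'$ is abelian, as $a^2$ is central and $[c, ab] = [c,b]\,[c,a]^b = c^{-2}c^2 = 1$. The crux is to show $\exp(M') = 2^{n-1}$, so that $M'$ contains no element of order $2^n$. Here the sole difference between the two presentations, the relation $b^{2^m} = a^{2^m}$, enters: it forces $d = a^{-2}b^2$ to have order $2^{m-1}$, so that $\Frat(H) = \langle a^2\rangle \times \langle d\rangle \times \langle c\rangle \cong \Cyc{2^{n-1}}\times\Cyc{2^{m-1}}\times\Cyc{2^\ell}$, and $(ab)^2 = a^2b^2c^{-1} = (a^2)^2\,d\,c^{-1}$, whose $\langle a^2\rangle$-component $(a^2)^2$ is a \emph{square}. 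Since $ab$ centralizes $\Frat(H)$ and $M' = \Frat(H) \cup ab\,\Frat(H)$, for every $h \in \Frat(H)$ we have $(ab\cdot h)^2 = (ab)^2 h^2 = (a^2b^2c^{-1})h^2$, whose $\langle a^2\rangle$-component is again a square; hence its order divides $\operatorname{lcm}(2^{n-2}, 2^{m-1}, 2^\ell) = 2^{n-2}$ (using $n - 2 \ge m - 1 \ge \ell$). As $\Frat(H)$ itself has exponent $2^{n-1}$, it follows that $\exp(M') = 2^{n-1} < 2^n$.

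Putting this together: an isomorphism $G \to H$ would carry the abelian maximal subgroup $M$, together with its element $xy$ of order $2^n$, to an abelian maximal subgroup of $H$ with an element of order $2^n$; but no such subgroup of $H$ exists. Hence $\DG{1} \not\cong \DG{2}$. I expect the computation of $\exp(M')$ to be the main obstacle, since it is the one point where one must track exactly how replacing $y^{2^m} = 1$ by $b^{2^m} = a^{2^m}$ depresses the order of the ``diagonal'' generator $ab$ (to $2^{n-1}$) as compared with $xy$ (which retains order $2^n$), and this requires computing inside $\Frat(H)$ with its amalgamated cyclic structure rather than with the split structure available in $G$.
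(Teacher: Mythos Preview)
Your argument is correct and is essentially the paper's proof. Both compute the exponent of the same subgroup in each group—what you call the unique abelian maximal subgroup $M = \langle xy, x^2, z\rangle$ (resp.\ $M' = \langle ab, a^2, c\rangle$) is exactly the centralizer $\CC_G(\D{G})$ (resp.\ $\CC_H(\D{H})$) used in the paper—and both conclude by showing $xy$ has order $2^n$ in $G$ while $ab$ has order only $2^{n-1}$ in $H$, via the identical computation $(ab)^{2^{n-1}} = a^{2^{n-1}}b^{2^{n-1}}c^{-2^{n-2}} = 1$. The only difference is packaging: by naming the subgroup as $\CC_G(\D{G})$ the paper gets a canonical description and avoids your detour through the three maximal subgroups, whereas your framing costs an extra line ruling out $\langle a, b^2, c\rangle$ and $\langle b, a^2, c\rangle$ but is otherwise the same invariant.
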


\begin{proof}
    Let $G = \DG{1} = \langle x, y, z \rangle$ and $H = \DG{2} = \langle a, b, c \rangle$.
    The centralizers of the derived subgroups can distinguish these groups.
    First we calculate the exponent of $\CC_H(\D{H})$.

    Recall that the Frattini subgroup $\Frat(H)$ is generated by $a^2$, $b^2$ and $c$.
    As $a^2$ and $b^2$ are central we have $\Frat(H) \le \CC_H(\D{H})$.
    Moreover, as $ab \in \CC_H(\D{H})$ but $ab \notin \Frat(H)$
    we have that $\langle a^2, ab, b^2, c \rangle$ is an abelian maximal subgroup of $H$ and hence equals $\CC_H(\D{H})$.
    The orders of $a^2$, $b^2$ and $c$ are $2^{n - 1}$, $2^{n - 1}$ and $2^\ell$ respectively.
    From
    $(ab)^2 = a^2bcb = a^2b^2c^{-1}$ and $b^{2^m} = a^{2^m}$
    we obtain
    \[
        (ab)^{2^{n - 1}} = a^{2^{n - 1}}b^{2^{n - 1}}c^{-2^{n - 2}} = c^{-2^{n - 2}}.
    \]
    Since $n > m > \ell \ge 2$, we have $(ab)^{2^{n - 1}} = 1$ and the exponent of $\CC_H(\D{H})$ is $2^{n - 1}$.

    Similar arguments show that $\CC_G(\D{G})$ is abelian and generated by $x^2$, $xy$, $y^2$ and $z$.
    Then the exponent of $\CC_G(\D{G})$ is equal to $2^n$ which is the order of $xy$.
    Hence these groups are not isomorphic.
\end{proof}

The negative solutions to the modular isomorphism problem presented in~\cite{GLMdR22} correspond to the groups $\DG{1}$ and $\DG{2}$ for $n > m > \ell = 2$ in our notation.
In fact, the isomorphisms from $\DG{1} = \langle x, y, z \rangle$ and $\DG{2} = \langle a, b, c \rangle$ to the original groups
\begin{alignat*}{8}
    & \mathsf{G} &\ &= &\ \langle\, \mathsf{x}, \mathsf{y}, \mathsf{z} \mid
        \mathsf{x}^{2^n} &= 1, &\quad \mathsf{y}^{2^m} &= 1, &\quad \mathsf{z}^{2^\ell} &= 1, &\quad
            [\mathsf{y}, \mathsf{x}] &= \mathsf{z}, &\quad \mathsf{z}^\mathsf{x} &= \mathsf{z}^{-1}, &\quad \mathsf{z}^\mathsf{y} &= \mathsf{z}^{-1} \,\rangle, \\
    & \mathsf{H} &\ &= &\ \langle\, \mathsf{a}, \mathsf{b}, \mathsf{c} \mid
        \mathsf{a}^{2^n} &= 1, &\quad \mathsf{b}^{2^m} &= 1, &\quad \mathsf{c}^{2^\ell} &= 1, &
            [\mathsf{b}, \mathsf{a}] &= \mathsf{c}, &\quad \mathsf{c}^\mathsf{a} &= \mathsf{c}^{-1}, &\quad \mathsf{c}^\mathsf{b} &= \mathsf{c} \,\rangle
\end{alignat*}
are given by
\begin{alignat*}{4}
    \DG{1} &\to \mathsf{G}, &\quad x &\mapsto \mathsf{x}, &\quad y &\mapsto \mathsf{y},           &\quad z &\mapsto \mathsf{z}, \\
    \DG{2} &\to \mathsf{H}, &\quad a &\mapsto \mathsf{a}, &\quad b &\mapsto \mathsf{a}\mathsf{b}, &\quad c &\mapsto \mathsf{c}.
\end{alignat*}

\begin{rmk}
    For a finite $p$-group $G$ with cyclic derived subgroup $\D{G}$,
    Garc\'{\i}a-Lucas, del R\'{\i}o and Stanojkovski~\cite[Theorem A]{GLdRS23} prove that
    the exponent of the centralizer $\CC_G(\D{G})$ is an invariant of its modular group algebra, provided that the prime~$p$ is odd.
    This shows that the above pairs of groups, including the original ones presented in~\cite{GLMdR22}, are distinctive for $p = 2$.
\end{rmk}

The groups $\DG{1}$ and $\DG{2}$ are in fact isomorphic if $n = m$.
We summarize explicitly the negative solutions to the modular isomorphism problem that we obtain by the previous theorem and lemma.

\begin{cor}\label{cor:Counterexamples}
    Let $n > m > \ell \ge 2$.
    Then the groups
    \begin{align*}
        \langle\, x, y, z \mid x^{2^n} &= 1, & y^{2^m} &= 1,       & z^{2^\ell} &= 1, & [y, x] &= z, & [z, x] &= z^{-2}, & [z, y] &= z^{-2} \,\rangle, \\
        \langle\, a, b, c \mid a^{2^n} &= 1, & b^{2^m} &= a^{2^m}, & c^{2^\ell} &= 1, & [b, a] &= c, & [c, a] &= c^{-2}, & [c, b] &= c^{-2} \,\rangle
    \end{align*}
    are not isomorphic, but have isomorphic group algebras over an arbitrary field of characteristic~$2$.
\end{cor}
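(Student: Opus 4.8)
The statement is an immediate consequence of the two preceding results, so the plan is essentially bookkeeping. First I would identify the two displayed groups with $\DG{1}$ and $\DG{2}$ in the notation fixed after \cref{thm:Presentations}: the group on the first line is generated by $x, y, z$ subject to exactly the relations labelled $\dice{1}$, and the group on the second line is generated by $a, b, c$ subject to the relations labelled $\dice{2}$. Thus the corollary is just the conjunction of the two assertions $\F\DG{1} \cong \F\DG{2}$ and $\DG{1} \not\cong \DG{2}$ under the hypothesis $n > m > \ell \ge 2$.

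For the algebra isomorphism I would invoke \cref{thm:IsomorphicAlgebras}: since $n > m > \ell \ge 2$ implies in particular $n \ge m > \ell \ge 2$, that theorem gives $\F\DG{1} \cong \F\DG{2}$ over an arbitrary field $\F$ of characteristic~$2$. For the non-isomorphism of the groups I would invoke \cref{lem:G1vsG2}, whose hypothesis is precisely $n > m > \ell \ge 2$ and whose conclusion is $\DG{1} \not\cong \DG{2}$. Combining the two gives the claim.

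There is no genuine obstacle here, since the substantive content has already been carried out in \cref{thm:IsomorphicAlgebras} and \cref{lem:G1vsG2}. The only point worth flagging is the asymmetry in the hypotheses: the strict inequality $n > m$ is needed only for the non-isomorphism of the groups—indeed $\DG{1} \cong \DG{2}$ when $n = m$, as recorded just before the corollary—whereas the algebra isomorphism already holds in the boundary case $n = m$. I would therefore state the corollary with the strict chain $n > m > \ell \ge 2$ and let the proof consist of the single sentence citing the theorem and the lemma.
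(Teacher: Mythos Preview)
Your proposal is correct and matches the paper's approach exactly: the corollary is stated without a separate proof, introduced by the sentence ``We summarize explicitly the negative solutions to the modular isomorphism problem that we obtain by the previous theorem and lemma,'' i.e.\ it is just the conjunction of \cref{thm:IsomorphicAlgebras} and \cref{lem:G1vsG2}. Your remark about the asymmetry in the hypotheses ($n>m$ needed only for non-isomorphism) is also noted in the paper just before the corollary.
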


\begin{rmk}\label{rmk:OddPrimes}
    Given the simple proof of the negative solutions to the modular isomorphism problem for $p = 2$,
    it is natural to ask whether this strategy might be imitated for groups of odd order.
    While we currently have no clue,
    at least the isomorphism we exhibit and the proof we use can not be imitated directly, as we will quickly show.
    Analyzing what facilitates the proof so much,
    one important thing is that no actual commutator between any non-trivial units in the group algebra must be computed:
    the action of the generators of the group on the derived subgroup follows from the fact that their squares are central,
    and the correct structure of the whole derived subgroup essentially from its cyclicity.
    To imitate the proof, it would be necessary to find a finite non-abelian $p$-group $G$ generated by two elements,
    say $x$ and $y$, such that $x^p$ and $y^p$ are central and $\D{G}$ is cyclic.
    We show that in this case $G/\Z(G)$ is elementary abelian of rank two.
    For such groups,
    a positive answer to the modular isomorphism problem over an arbitrary field is obtained by Drensky~\cite{Dre89}.

    So assume~$p$ is odd, $G = \langle x, y\rangle$ is a finite non-abelian $p$-group such that $x^p, y^p$ are central in $G$ and $\D{G} = \langle z \rangle$ is cyclic.
    We assume $z = [y, x]$ and let $p^\ell$ be the order of $z$.
    Then $G$ is regular by~\cite[III, Satz 10.2 (c)]{Hup67}.
    Hence $1 = [z, x^p] = [z, x]^p$ by~\cite[III, Satz 10.6 (b)]{Hup67}.
    As $\Omega(\D{G}) = \langle z^{p^{\ell - 1}} \rangle$, there exists an $0 \le r \le p - 1$ such that $[z, x] = z^{r p^{\ell - 1}}$.
    This implies $z^{x^i}  = z^{1 + ir p^{\ell - 1}}$.
    We conclude by the standard commutator identity that $[y, x^p] = [y, x] [y, x]^x \dotsb [y, x]^{x^{p - 1}} = z^k$, where
    \[
        k = \sum_{i = 0}^{p - 1} (1 + irp^{\ell - 1}) = p + \tfrac{1}{2}r(p - 1)p^\ell \equiv p \mod p^\ell.
    \]
    As $[y, x^p] = 1$, we conclude that $\ell = 1$.
    Hence $x$ centralizes $z$, and so does $y$.
    This implies that $z$ is central.
    As $x^p$ and $y^p$ are also central, we get $\Frat(G) = \Z(G)$, which shows that $G/\Z(G)$ is indeed elementary abelian of rank two.
    
    We add one bibliography remark for readers interested in generalizing the groups from \cref{thm:IsomorphicAlgebras} to odd primes:
    in several generalizations we have tried and which were proposed to us the invariants from \cite{Bag99} turned out to solve the problem in the positive.
\end{rmk}

For the choice of our isomorphisms in \cref{thm:IsomorphicAlgebras}, see also \cref{rmk:Assignment}.

\section{Non-isomorphic modular group algebras}\label{sec:PositiveSolutions}
The rest of this article is devoted to showing that the groups $\DG{1}$ and $\DG{2}$ with $n > m > \ell \ge 2$ are indeed
the only negative solutions within our class, thereby completing the proof of \cref{main:A}.
Throughout this section, we fix a field $\F$ of characteristic~$p$ and assume $p = 2$ unless otherwise stated.

We first apply known group-theoretical invariants
and then utilize a well-known argument on power maps to distinguish algebras.
However, this turns out to be insufficient, and in the last part we apply a procedure that could be called a ``group base approximation''.
This could also be used for other classes of groups with some adjustments.

For the proof, we examine all the pairs of groups with $n \ge m$ and $\ell \ge 2$.
Basically, we divide it into five cases.
\begin{enumerate}
    \item $n > m > \ell$.
    \item $n > m \ge 2$ and $m \le \ell$.
    \item $n > m = 1$.
    \item $n = m \ge 2$.
    \item $n = m = 1$.
\end{enumerate}

We observe first that the coclass of the groups is $n + m - 1$, and the last case $n = m = 1$ corresponds to the finite $2$-groups of maximal class
for which a positive answer to the modular isomorphism problem has been known for decades~\cite{Car77, Bag92}.
Therefore we may assume $n \ge 2$.

The case $n = m \ge 2$ can be dealt with using a classic invariant, the center of a group, except the pair that corresponds to $\dice{5}\dice{6}$.
We will use a group base approximation for this case (\cref{lem:NeqMltLG5vsG6,lem:NeqMeqLG5vsG6,lem:MgtLG5vsG6}).
What will be used to distinguish modular group algebras for the rest of the cases is summarized in \cref{tbl:Summary}.
\begin{table}[htbp]
    \centering
    \begin{tabular}{p{1cm}p{2.5cm}p{2.5cm}p{2.5cm}}
        \toprule
         & $n > m > \ell$ & $n > m \ge 2$ and $m \le \ell$ & $n > m = 1$ \\
        \midrule
        $\dice{1}\dice{2}$ & X & P & P \\
        $\dice{1}\dice{3}$ & C & C & Q \\
        $\dice{1}\dice{4}$ & C & C & P \\
        $\dice{1}\dice{5}$ & C & C & C \\
        $\dice{1}\dice{6}$ & C & C & C \\
        $\dice{2}\dice{3}$ & C & C & P \\
        $\dice{2}\dice{4}$ & C & C & K \\
        $\dice{2}\dice{5}$ & C & C & C \\
        $\dice{2}\dice{6}$ & C & C & C \\
        $\dice{3}\dice{4}$ & A & P & P \\
        $\dice{3}\dice{5}$ & C & C & C \\
        $\dice{3}\dice{6}$ & C & C & C \\
        $\dice{4}\dice{5}$ & C & C & C \\
        $\dice{4}\dice{6}$ & C & C & C \\
        $\dice{5}\dice{6}$ & A & P & P \\
        \bottomrule \\
    \end{tabular}
    \caption{
        Summary of how modular group algebras will be distinguished for $n > m$.
        (X) counterexample           [\cref{thm:IsomorphicAlgebras}],
        (C) center of group          [\cref{lem:Center}],
        (P) kernel size of power map [\cref{lem:KernelSizes}],
        (Q) Quillen's theorem        [\cref{prop:Quillen}],
        (K) K\"ulshammer's theorem   [\cref{prop:Kuelshammer}],
        (A) group base approximation [\cref{subsec:GroupBaseApprox}].
    }
    \label{tbl:Summary}
\end{table}

\subsection{Group-theoretical invariants}

The first invariant we will use is the isomorphism type of the center of a group.
This is a well-known invariant, and a proof can be found in~\cite[III, Theorem 6.6]{Seh78}.

\begin{lem}\label{lem:Center}
    Let $G = \langle x, y, z \rangle$ be one of the groups $\DG{1}$, \dots, $\DG{6}$.
    Then the center of $G$ is generated by $x^2$, $y^2$ and $z^{2^{\ell - 1}}$.
    Consequently\textup{:}
    \begin{alignat*}{4}
        \Z(\DG{1}) &= \langle x^2 \rangle \times \langle y^2 \rangle        \times \langle z^{2^{\ell - 1}} \rangle &\quad &\cong \Cyc{2^{n - 1}} \times \Cyc{2^{m - 1}} \times \Cyc{2}; \\
        \Z(\DG{2}) &= \langle x^2 \rangle \times \langle x^{-2} y^2 \rangle \times \langle z^{2^{\ell - 1}} \rangle &\quad &\cong \Cyc{2^{n - 1}} \times \Cyc{2^{m - 1}} \times \Cyc{2}; \\
        \Z(\DG{3}) &= \langle x^2 \rangle \times \langle y^2 \rangle                                                &\quad &\cong \Cyc{2^{n - 1}} \times \Cyc{2^m}; \\
        \Z(\DG{4}) &= \langle x^2 \rangle \times \langle x^{-2} y^2 \rangle                                         &\quad &\cong \Cyc{2^{n - 1}} \times \Cyc{2^m}; \\
        \Z(\DG{5}) &= \langle x^2 \rangle \times \langle y^2 \rangle                                                &\quad &\cong \Cyc{2^n} \times \Cyc{2^{m - 1}}; \\
        \Z(\DG{6}) &= \langle x^2 \rangle \times \langle x^{-2} y^2 \rangle                                         &\quad &\cong \Cyc{2^n} \times \Cyc{2^{m - 1}}.
    \end{alignat*}
\end{lem}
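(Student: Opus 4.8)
The plan is to prove the equality $\Z(G) = \langle x^{2}, y^{2}, z^{2^{\ell-1}} \rangle$ uniformly for all six groups, and only then split into cases to read off the isomorphism type. The inclusion ``$\supseteq$'' is quick: in each of the six presentations the relations $[z,x] = z^{-2}$ and $[z,y] = z^{-2}$, combined with $[y,x] = z$, just say that $x$ and $y$ both invert $\D{G} = \langle z \rangle \cong \Cyc{2^{\ell}}$ (\cref{thm:Presentations}). Hence $[y, x^{2}] = [y,x][y,x]^{x} = z z^{-1} = 1$ and, likewise, $[x, y^{2}] = 1$, so $x^{2}$ and $y^{2}$ commute with the generators $x$ and $y$ and are central; and $z^{2^{\ell-1}}$, the unique involution of $\langle z \rangle$, is fixed under inversion and hence also central.

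For the reverse inclusion I would first observe that $\Frat(G) = \langle x^{2}, y^{2}, z \rangle$: this subgroup contains $\D{G}$ together with the squares $x^{2}, y^{2}$, and the quotient $G/\langle x^{2}, y^{2}, z \rangle$ is elementary abelian of rank $2$ because $G$ is two-generated and (since $G/\D{G} \cong \Cyc{2^{n}} \times \Cyc{2^{m}}$) non-cyclic. Write $C = \langle x^{2}, y^{2} \rangle$, a central subgroup, so that $\Frat(G) = C \langle z \rangle$. Given $g \in \Z(G)$, its image in $G/\Frat(G) \cong \Cyc{2} \times \Cyc{2}$ is one of $1, \bar{x}, \bar{y}, \overline{xy}$; I would eliminate the last three. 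If $g \in x\Frat(G)$, write $g = (xz^{k})c$ with $c \in C$; since $c$ is central, $[g,y] = [xz^{k}, y] = [x,y]^{z^{k}}[z^{k},y] = z^{-1} z^{-2k} = z^{-1-2k} \ne 1$, as $2^{\ell}$ cannot divide the odd integer $1 + 2k$. The cases $g \in y\Frat(G)$ and $g \in xy\Frat(G)$ are identical, giving $[g,x] = z^{1-2k} \ne 1$ (using $[xy,x] = z$ in the latter). Therefore $g \in \Frat(G) = C\langle z \rangle$; writing $g = cz^{k}$ with $c$ central, centrality of $g$ forces $z^{-k} = (z^{k})^{x} = z^{k}$, i.e. $2^{\ell-1} \mid k$. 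This gives $\Z(G) = C\langle z^{2^{\ell-1}} \rangle = \langle x^{2}, y^{2}, z^{2^{\ell-1}} \rangle$ for all six groups at once.

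It then remains to identify this abelian group in each case, and I expect the bulk of the (elementary) work to sit here. The main tool is that $\bar{x}$ and $\bar{y}$ are free generators of $G/\D{G} \cong \Cyc{2^{n}} \times \Cyc{2^{m}}$ (\cref{thm:Presentations}), which pins down exactly when a product of powers of $x^{2}$ and $y^{2}$ falls into $\D{G}$, and hence lets one compute the relevant intersections of power subgroups. The outcome in each case is read off from the power relations: $\langle x^{2} \rangle$ is cyclic of order $2^{n-1}$ in $\DG{1}$ through $\DG{4}$ and of order $2^{n}$ in $\DG{5}, \DG{6}$ (according to whether $x^{2^{n}}$ is trivial); $z^{2^{\ell-1}}$ survives as a separate direct factor $\Cyc{2}$ exactly in $\DG{1}$ and $\DG{2}$, and is otherwise absorbed into $\langle y^{2} \rangle$ (in $\DG{3}$, since $z^{2^{\ell-1}} = y^{2^{m}}$), into $\langle x^{2} \rangle$ (in $\DG{5}, \DG{6}$, since $z^{2^{\ell-1}} = x^{2^{n}}$), or into $\langle x^{-2}y^{2} \rangle$ (in $\DG{4}$, since $z^{2^{\ell-1}} = x^{-2^{m}}y^{2^{m}} = (x^{-2}y^{2})^{2^{m-1}}$); and for the three groups $\DG{2}, \DG{4}, \DG{6}$, in which $y^{2^{m}}$ has a nontrivial power-of-$x$ part, one replaces $y^{2}$ by $x^{-2}y^{2}$ to obtain a genuine complement of $\langle x^{2} \rangle$. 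Assembling these observations yields the six decompositions in the statement. The only real obstacle is the care needed for this final bookkeeping — deciding in each group which generator has the larger order and whether $z^{2^{\ell-1}}$ lies inside $\langle x^{2}, y^{2} \rangle$ — as there is no conceptual difficulty once $\Z(G) = \langle x^{2}, y^{2}, z^{2^{\ell-1}} \rangle$ is established.
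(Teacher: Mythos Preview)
Your argument is correct. The inclusion $\supseteq$ is handled exactly as in the paper, and your explicit coset-by-coset commutator computations for $\subseteq$ are sound; the case analysis for the isomorphism types is also accurate, if more detailed than necessary.

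The paper reaches the same conclusion by a shorter structural route. Instead of passing through $\Frat(G)$ and eliminating the cosets $x\Frat(G)$, $y\Frat(G)$, $xy\Frat(G)$ one at a time, it simply observes that the quotient $G/\langle x^{2}, y^{2}\rangle$ is dihedral (being generated by the images of $x$ and $y$, which square to the identity there) and that the center of this dihedral group is $\langle \bar{z}^{\,2^{\ell-1}}\rangle$. Since $\langle x^{2}, y^{2}\rangle \le \Z(G)$ one has $\Z(G)/\langle x^{2}, y^{2}\rangle \le \Z(G/\langle x^{2}, y^{2}\rangle)$, which immediately gives $\Z(G) \le \langle x^{2}, y^{2}, z^{2^{\ell-1}}\rangle$. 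Your approach trades this single structural observation for three concrete commutator evaluations; both work, but the paper's version avoids the case split and makes the role of the dihedral central quotient more visible. For the final identification of the six centers the paper says only that the descriptions ``are easy to read off from the relations'', so your more explicit bookkeeping is a welcome expansion rather than a deviation.
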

\begin{proof}
    From the relations, we directly get $x^2, y^2 \in \Z(G)$.
    As $G/\langle x^2, y^2 \rangle$ is dihedral with the center generated by the image of $z^{2^{\ell - 1}}$,
    it follows that $\Z(G) = \langle x^2, y^2, z^{2^{\ell - 1}} \rangle$.
    The concrete descriptions of the centers are then easy to read off from the relations.
\end{proof}

\begin{prop}\label{prop:GTInvariantsNeqM}
    Let $n = m$.
    If $G$ and $H$ are one of the groups $\DG{1}$, \dots, $\DG{6}$ with $\F G \cong \F H$ but $G \not\cong H$,
    then $G$ and $H$ are isomorphic to the groups $\DG{5}$ and $\DG{6}$ with $n = m \ge 2$.
\end{prop}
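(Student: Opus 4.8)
The plan is to narrow the list $\DG{1},\dots,\DG{6}$ to a handful of isomorphism types when $n=m$, and then to tell those apart by the isomorphism type of the center of the group. Since $\F G\cong\F H$, one has $\lvert G\rvert=\lvert H\rvert$, and also $\F[G/\D{G}]\cong\F G/[\F G,\F G]\F G\cong\F H/[\F H,\F H]\F H\cong\F[H/\D{H}]$, so $G/\D{G}\cong H/\D{H}$ because the modular group algebra of a finite abelian $p$-group determines the group. By \cref{thm:Presentations} this forces $G$ and $H$ to carry the same parameters $n$, $m$ and $\ell$, with $n=m$ by hypothesis. As noted right after \cref{thm:Presentations}, for $n=m$ one has $\DG{1}\cong\DG{2}$ and $\DG{3}\cong\DG{4}\cong\DG{5}$, so up to isomorphism $G$ and $H$ each lie in $\{\DG{1},\DG{5},\DG{6}\}$.

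If $n=m=1$, then the groups in \eqref{eq:DiceGroups} are of maximal class, and the modular isomorphism problem is known to have a positive answer for $2$-groups of maximal class \cite{Car77,Bag92}; hence $\F G\cong\F H$ already gives $G\cong H$, contradicting $G\not\cong H$. So from now on I would assume $n=m\ge 2$.

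Now I would invoke \cref{lem:Center}. For $n=m\ge 2$ it yields $\Z(\DG{1})\cong\Cyc{2^{n-1}}\times\Cyc{2^{n-1}}\times\Cyc{2}$, which has exponent $2^{n-1}$, whereas $\Z(\DG{5})\cong\Z(\DG{6})\cong\Cyc{2^{n-1}}\times\Cyc{2^{n}}$, which has exponent $2^{n}$; since $n\ge 2$, the first center is not isomorphic to the other two. As the isomorphism type of the center of a group is an invariant of the modular group algebra \cite[III, Theorem 6.6]{Seh78}, it follows that $\F\DG{1}$ is isomorphic to neither $\F\DG{5}$ nor $\F\DG{6}$. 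Consequently $G$ and $H$ are both isomorphic to one of $\DG{5}$ and $\DG{6}$, and since $G\not\cong H$ one of them is isomorphic to $\DG{5}$ and the other to $\DG{6}$, which is exactly the assertion.

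The genuine obstruction lies outside this proposition: the center is blind to the distinction between $\DG{5}$ and $\DG{6}$ when $n=m$, their centers being isomorphic (both $\cong\Cyc{2^{n-1}}\times\Cyc{2^{n}}$), and the usual repertoire of invariants does no better on the pair $\dice{5}\dice{6}$; this is exactly why that pair is postponed to the group base approximation of \cref{subsec:GroupBaseApprox} (see \cref{lem:NeqMltLG5vsG6,lem:NeqMeqLG5vsG6,lem:MgtLG5vsG6}). The only care needed within the present argument is to confirm that $\F G\cong\F H$ truly forces identical parameters, and to observe that for $n=2$ the centers above degenerate---$\Z(\DG{1})$ becomes elementary abelian of order~$8$---yet the comparison of exponents $2^{n-1}$ versus $2^{n}$ still separates $\DG{1}$ from $\DG{5}$ and $\DG{6}$.
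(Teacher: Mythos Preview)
Your argument is correct and follows essentially the same route as the paper: dispose of $n=m=1$ via the known positive solution for $2$-groups of maximal class, use the identifications $\DG{1}\cong\DG{2}$ and $\DG{3}\cong\DG{4}\cong\DG{5}$ to reduce to $\{\DG{1},\DG{5},\DG{6}\}$, and separate $\DG{1}$ from the other two by the center invariant from \cref{lem:Center}. The only difference is cosmetic: you add a paragraph verifying that $G$ and $H$ share the same parameters $(n,m,\ell)$, which the proposition already takes for granted (both groups are drawn from the fixed list $\DG{1},\dots,\DG{6}$), so that part is superfluous here though it reappears in the proof of \cref{main:A}.
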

\begin{proof}
    We observe first that the coclass of the groups is $n + m - 1$ in general.
    Hence the case $n = m = 1$ corresponds to the finite $2$-groups of maximal class for which
    a positive answer to the modular isomorphism problem is obtained by
    Carlson~\cite[p. 434]{Car77} and Bag\'{\i}nski~\cite{Bag92}.

    Thus we may assume that $n = m \ge 2$.
    Recall that $\DG{1} \cong \DG{2}$ and $\DG{3} \cong \DG{4} \cong \DG{5}$ for $n = m$.
    Since
        $\Z(\DG{1}) \cong \Cyc{2^{n - 1}} \times \Cyc{2^{m - 1}} \times \Cyc{2}$
    and
        $\Z(\DG{5}) \cong \Z(\DG{6}) \cong \Cyc{2^n} \times \Cyc{2^{m - 1}}$
    from \cref{lem:Center}, the assertion follows.
\end{proof}

We will see in \cref{lem:NeqMltLG5vsG6,lem:NeqMeqLG5vsG6,lem:MgtLG5vsG6} that
the remaining cases cannot happen and hence $n > m$.

\begin{prop}\label{prop:Kuelshammer}
    Let $n > m = 1$. Then $\F \DG{2} \not\cong \F\DG{4}$.
\end{prop}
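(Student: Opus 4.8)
The plan is to invoke Külshammer's theorem, which supplies invariants of $\F G$ that are finer than the isomorphism type of the center and than the power‑map kernels of \cref{lem:KernelSizes}: for a perfect field $\F$ of characteristic~$p$ it describes, for each $n$, how the canonical subalgebra $\Agemo_n(\F G)$ is assembled from the group, and in particular it pins down the number of $p^n$‑th powers in $G$ together with the way the (conjugation‑invariant) set of $p^n$‑th powers decomposes into conjugacy classes. This extra strength is precisely what is needed, because when $m=1$ the groups $\DG{2}$ and $\DG{4}$ agree on everything coarser: they have isomorphic centers by \cref{lem:Center}, isomorphic Frattini, lower central and $\Agemo$‑series, and isomorphic quotients $G/\Agemo_k(\D{G})$ for $1\le k<\ell$. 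Their presentations differ only in the relation $y^2=x^2$ versus $y^2=x^2z^{2^{\ell-1}}$, that is, only inside the socle $\langle z^{2^{\ell-1}}\rangle$ of the cyclic derived subgroup, so a sufficiently sharp invariant is required to detect it.

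Concretely, I would first fix a normal form: since $|\DG{2}|=|\DG{4}|=2^{n+1+\ell}$, every element of either group is uniquely $x^iy^jz^k$ with $0\le i<2^n$, $j\in\{0,1\}$ and $0\le k<2^\ell$. Using $z^x=z^{-1}$, $z^y=z^{-1}$, $y^x=yz$ and the centrality of $x^2$, one evaluates $(x^iz^k)^2$ and $(x^iyz^k)^2$ in terms of $x^2$, $y^2$ and $z$, the defining relation for $y^2$ entering only through the squares of the elements with $j=1$. Carrying this out, the set of squares of $\DG{2}$ equals $\{x^{4a}z^c\}\cup\{x^{4a+2}\}$, of cardinality $2^{n-2}(2^\ell+1)$, whereas the set of squares of $\DG{4}$ equals $\{x^{4a}z^c\}\cup\{x^{4a+2}\}\cup\{x^{4a+2}z^{2^{\ell-1}}\}$, of cardinality $2^{n-2}(2^\ell+2)$, the additional elements $x^{4a+2}z^{2^{\ell-1}}$ being central and so forming $2^{n-2}$ new one‑element classes. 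Hence $\DG{2}$ and $\DG{4}$ have different numbers of squares, and also different numbers of conjugacy classes meeting the set of squares; by Külshammer's theorem these quantities are invariants of the modular group algebra, so $\F\DG{2}\not\cong\F\DG{4}$.

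The main obstacle is partly bookkeeping and partly a matching step. On the computational side, the non‑commutativity forces a careful evaluation of $(x^iyz^k)^2$ via the conjugation identities and a short split on the parity of $i$, and one must verify that the sets displayed above are exactly the sets of squares—nothing collapses further and nothing else appears. On the conceptual side, one has to isolate the precise consequence of Külshammer's theorem being used, i.e.\ to argue that the quantity that actually changes (the number of distinct $p$‑th powers, equivalently the count of conjugacy classes among them, as recovered from $\Agemo_1(\F G)$) is genuinely among the invariants the theorem delivers; the cruder power‑map kernel of \cref{lem:KernelSizes} is, as its placement in \cref{tbl:Summary} already indicates, insufficient in this case.
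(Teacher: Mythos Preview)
Your proposal is correct and follows essentially the same route as the paper: both compute the squares of a generic element $x^iy^jz^k$ by a parity case split, find that the only discrepancy arises when $j=1$ and $i$ is even, giving $\DG{4}$ the $2^{n-2}$ extra central squares $x^{4a+2}z^{2^{\ell-1}}$ and hence $2^{n-2}$ extra conjugacy classes of squares, and then appeal to K\"ulshammer's result that the number of conjugacy classes consisting of squares is an invariant of $\F G$. One minor caveat: the raw cardinality of the set of squares is not among the standard K\"ulshammer invariants, so you should rest the argument on the conjugacy-class count (which you also compute correctly) rather than on the element count.
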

\begin{proof}
    We will calculate the number of conjugacy classes that consist of squares for each group.
    This is an invariant of the group algebra due to the work of K\"ulshammer~\cite[Section 1]{Kue82},
    cf.~\cite[Section 2.2]{HS06} % Formally speaking, the ground field is prime in [HS06].
    for a short proof.
    To see that these numbers are different for $\DG{2}$ and $\DG{4}$, consider a generic element
    $x^r y^s z^t$ with $0 \le r \le 2^n - 1$, $0 \le s \le 1$ and $0 \le t \le 2^\ell - 1$.
    Then, independently of the group,
    \[
        (x^r y^s z^t)^2 = \begin{cases}
            x^{2r} z^{2t}         & (r \equiv 0 \mod 2, \ s = 0) \\
            x^{2r}                & (r \equiv 1 \mod 2, \ s = 0) \\
            x^{2r} y^2 z^{2t - 1} & (r \equiv 1 \mod 2, \ s = 1) \\
            x^{2r} y^2            & (r \equiv 0 \mod 2, \ s = 1).
        \end{cases}
    \]
    Note that if a square is not central,
    then its centralizer is given by the maximal subgroup $\CC_G(\D{G}) = \langle x^2, y^2, z, xy \rangle$,
    the centralizer of $\D{G}$.
    So each conjugacy class containing squares has either one or two elements;
    this can be seen from the generic form of an element.
    The case $r \equiv 0 \mod 2$ and $s = 0$ covers all elements of shape
    $x^u z^v$ with $u \equiv 0 \mod 4$ and $v \equiv 0 \mod 2$.
    These elements are formally reading the same in $\DG{2}$ and $\DG{4}$ and they also give the same number of classes,
    as the centrality of an element of this shape does not depend on which group we choose.
    In the case $ r \equiv 1 \mod 2$ and $s = 0$, we get elements of shape $x^u$ with $u \equiv 2 \mod 4$.
    Again this is the same for $\DG{2}$ and $\DG{4}$.
    Next we consider the case $r \equiv 1 \mod 2$ and $s = 1$.
    In $\DG{2}$, we have $x^{2r} y^2 z^{2t - 1} = x^{2r + 2} z^{2t - 1}$,
    so these are the elements of shape $x^u z^v$ with $u \equiv 0 \mod 4$ and $v \equiv 1 \mod 2$.
    In $\DG{4}$, we have $x^{2r} y^2 z^{2t - 1} = x^{2r + 2} z^{2t - 1 + 2^{\ell - 1}}$, which gives the same kind of elements.

    Finally, we consider the case where there is a difference: $r \equiv 0 \mod 2$ and $s = 1$.
    In $\DG{2}$, we have $x^{2r} y^2 = x^{2r + 2}$,
    which are elements of shape $x^u$ with $u \equiv 2 \mod 4$.
    These elements are not new as they already appeared in the second case (i.e. $r \equiv 1 \mod 2$ and $s = 0$).
    While in $\DG{4}$, we have $x^{2r} y^2 = x^{2r + 2} z^{2^{\ell - 1}}$,
    which are elements of shape $x^u z^{2^{\ell - 1}}$ for $u \equiv 2 \mod 4$.
    These elements are new, so in particular there are $2^{n - 2}$ more conjugacy classes of squares in $\DG{4}$ than in $\DG{2}$.
\end{proof}

\begin{prop}\label{prop:Quillen}
    Let $n > m = 1$. Then $\F \DG{1} \not\cong \F\DG{3}$.
\end{prop}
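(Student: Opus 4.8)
The plan is to distinguish the two group algebras by the $2$-rank of the underlying group, that is, the largest $r$ for which $(\Cyc 2)^r$ embeds into the group. This quantity is a modular group algebra invariant: by a theorem of Quillen~\cite{Qui71} it equals the Krull dimension of the cohomology ring $H^\bullet(G,\F)\cong\operatorname{Ext}^\bullet_{\F G}(\F,\F)$, and the latter depends only on the $\F$-algebra $\F G$. Hence it suffices to show that $\DG{1}$ and $\DG{3}$ have different $2$-ranks when $n>m=1$; concretely, I would prove that $\DG{1}$ contains a copy of $(\Cyc 2)^3$, while every involution of $\DG{3}$ is central, so that $\DG{3}$ has $2$-rank $2$ by \cref{lem:Center}.

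For $\DG{1}=\langle x,y,z\rangle$ with $m=1$, the generator $y$ has order $2$ and commutes with the central involutions $x^{2^{n-1}}$ and $z^{2^{\ell-1}}$ (both central by \cref{lem:Center}); since $y$ is not central and $\langle x^{2^{n-1}},z^{2^{\ell-1}}\rangle$ is central of order $4$, the subgroup $\langle x^{2^{n-1}},z^{2^{\ell-1}},y\rangle$ is elementary abelian of rank~$3$. Thus the $2$-rank of $\DG{1}$ is at least~$3$.

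For $\DG{3}=\langle x,y,z\rangle$ with $m=1$, I would first record the normal form: every element is uniquely of the form $x^ay^bz^c$ with $0\le a<2^n$, $b\in\{0,1\}$, $0\le c<2^\ell$, and multiplication is governed by $yx=xyz$, $zx=xz^{-1}$, $zy=yz^{-1}$ together with $y^2=z^{2^{\ell-1}}$ and $x^{2^n}=z^{2^\ell}=1$. Collecting the powers of $z$ to the right, one obtains $(x^ay^bz^c)^2=x^{2a}z^{e}$ for an explicit exponent $e$ whose parity depends on those of $a$ and $b$: a short case analysis shows that $e$ is even unless $a$ is odd and $b=1$, in which case $e$ is odd. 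Now $\langle x\rangle\cap\D{G}=1$ in $\DG{3}$ (as $x^{2^n}=1$ and $\bar x$ has order $2^n$ in $G/\D{G}$), hence $\langle x^2\rangle\cap\langle z\rangle=1$; therefore $(x^ay^bz^c)^2=1$ forces both $x^{2a}=1$ and $z^{e}=1$, the latter being impossible whenever $e$ is odd. Running through the remaining even-$e$ cases, the solutions all lie in $\langle x^{2^{n-1}},z^{2^{\ell-1}}\rangle\le\Z(\DG{3})$. Hence $\Omega(\DG{3})=\Omega(\Z(\DG{3}))\cong\Cyc 2\times\Cyc 2$ by \cref{lem:Center}, so $\DG{3}$ has $2$-rank~$2$.

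Putting the two bounds together, $\DG{1}$ has $2$-rank at least~$3$ and $\DG{3}$ has $2$-rank~$2$, whence $\F\DG{1}\not\cong\F\DG{3}$. I expect the main obstacle to be the bookkeeping in the squaring computation for $\DG{3}$: one must handle the non-abelian relations $zx=xz^{-1}$, $zy=yz^{-1}$ and the four parity cases of $(a,b)$ with care, and it is exactly the extra factor $z^{2^{\ell-1}}$ appearing in $y^2$ (which is trivial in $\DG{1}$) that forces $e$ to be odd in the decisive case and thereby rules out any non-central involution.
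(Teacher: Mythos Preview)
Your proposal is correct and follows essentially the same approach as the paper: both distinguish $\DG{1}$ from $\DG{3}$ via the maximal rank of an elementary abelian subgroup (Quillen's invariant), exhibiting $\langle x^{2^{n-1}}, y, z^{2^{\ell-1}}\rangle\cong(\Cyc{2})^3$ in $\DG{1}$ and arguing that all involutions of $\DG{3}$ lie in $\langle x^{2^{n-1}}, z^{2^{\ell-1}}\rangle$. The paper simply asserts the latter fact, whereas you spell out the squaring computation in normal form; your parity analysis of the exponent $e$ is accurate and the case breakdown is sound.
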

\begin{proof}
    Note that $\DG{1}$ contains an elementary abelian subgroup of rank three,
    namely $\langle x^{2^{n - 1}}, y, z^{2^{\ell - 1}} \rangle$.
    By contrast, $\DG{3}$ does not since the only involutions are $x^{2^{n - 1}}, z^{2^{\ell - 1}}$ and $x^{2^{n - 1}}z^{2^{\ell - 1}}$.
    As the maximal rank of an elementary abelian subgroup is known to be an invariant of the group algebra
    due to the work of Quillen~\cite[Theorem 6.28]{San85},
    we conclude that $\F \DG{1} \not\cong \F \DG{3}$.
\end{proof}

\begin{prop}\label{prop:GTInvariantsNgtM}
    Let $n > m$.
    If $G$ and $H$ are one of the groups $\DG{1}$, \dots, $\DG{6}$ with $\F G \cong \F H$ but $G \not\cong H$,
    then $G$ and $H$ are isomorphic to one of the following groups.
    \begin{enumerate}
        \item $\DG{1}$ and $\DG{2}$.
        \item $\DG{3}$ and $\DG{4}$.
        \item $\DG{5}$ and $\DG{6}$.
    \end{enumerate}
\end{prop}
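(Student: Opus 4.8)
The plan is to carry out a finite case check over the fifteen unordered pairs $\{G,H\}$ of distinct groups among $\DG{1}$, \dots, $\DG{6}$: for every pair apart from $\dice{1}\dice{2}$, $\dice{3}\dice{4}$ and $\dice{5}\dice{6}$ I would exhibit an invariant of the modular group algebra on which $G$ and $H$ disagree, so that $\F G\cong \F H$ becomes impossible; this forces the conclusion. Throughout I would lean on the explicit descriptions of the centers recorded in \cref{lem:Center} together with the standing hypothesis $n>m$.

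The principal tool is the isomorphism type of the center. Since $n>m$, the centers of $\DG{5}$ and $\DG{6}$ contain an element of order $2^n$, whereas $\Z(\DG{1})$, \dots, $\Z(\DG{4})$ all have exponent $2^{n-1}$; this separates $\{\DG{5},\DG{6}\}$ from $\{\DG{1},\DG{2},\DG{3},\DG{4}\}$ and thereby disposes of the eight cross pairs $\dice{1}\dice{5}$, $\dice{1}\dice{6}$, $\dice{2}\dice{5}$, $\dice{2}\dice{6}$, $\dice{3}\dice{5}$, $\dice{3}\dice{6}$, $\dice{4}\dice{5}$, $\dice{4}\dice{6}$ in one stroke. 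Inside the block $\{\DG{1},\DG{2},\DG{3},\DG{4}\}$ I would observe next that for $m\ge 2$ one has $\Z(\DG{1})\cong\Z(\DG{2})\cong\Cyc{2^{n-1}}\times\Cyc{2^{m-1}}\times\Cyc{2}$, of $2$-rank $3$, while $\Z(\DG{3})\cong\Z(\DG{4})\cong\Cyc{2^{n-1}}\times\Cyc{2^m}$, of $2$-rank $2$; this settles $\dice{1}\dice{3}$, $\dice{1}\dice{4}$, $\dice{2}\dice{3}$, $\dice{2}\dice{4}$ whenever $m\ge 2$. The pairs $\dice{1}\dice{2}$, $\dice{3}\dice{4}$ and $\dice{5}\dice{6}$ inherently escape the center, since within each the two centers are isomorphic, and this is precisely why they survive into the conclusion.

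What remains is the single corner $m=1$ (so $\ell\ge 2>m$), where $\Z(\DG{1})$, \dots, $\Z(\DG{4})$ are all isomorphic to $\Cyc{2^{n-1}}\times\Cyc{2}$ and the center is powerless for the four pairs $\dice{1}\dice{3}$, $\dice{1}\dice{4}$, $\dice{2}\dice{3}$, $\dice{2}\dice{4}$. Here I would invoke the finer invariants already in hand: \cref{prop:Quillen} shows $\DG{1}$ contains an elementary abelian subgroup of rank $3$ while $\DG{3}$ does not, and the same computation of involutions shows that $\DG{2}$ and $\DG{4}$ contain no such subgroup either, so Quillen's invariant rules out $\dice{1}\dice{3}$ and $\dice{1}\dice{4}$; \cref{prop:Kuelshammer} rules out $\dice{2}\dice{4}$; and the kernel sizes of the power maps from \cref{lem:KernelSizes} rule out $\dice{2}\dice{3}$. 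Assembling these, the only pairs never excluded are $\dice{1}\dice{2}$, $\dice{3}\dice{4}$ and $\dice{5}\dice{6}$, which is the claim. The genuinely delicate point, and the one I expect to be the main obstacle, is exactly this $m=1$ corner, and in it the pair $\dice{2}\dice{3}$: there no classical group-theoretic invariant (center, element orders, conjugacy class sizes, elementary abelian rank) distinguishes $\DG{2}$ from $\DG{3}$, so one is forced out of pure group theory and into a ring-theoretic invariant of $\F G$, namely the behaviour of the power maps.
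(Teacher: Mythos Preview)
Your argument is correct and follows the paper's route: eliminate most pairs via the isomorphism type of the center from \cref{lem:Center}, then handle the residual $m=1$ cross-pairs with Quillen's and K\"ulshammer's invariants. In one respect you are more careful than the paper's one-line proof, which cites only \cref{lem:Center,prop:Kuelshammer,prop:Quillen}: those three results do not separate $\DG{2}$ from $\DG{3}$ at $m=1$, since the two groups then share the center $\Cyc{2^{n-1}}\times\Cyc{2}$, the maximal elementary abelian rank~$2$, and (a short check shows) the number of conjugacy classes of squares. Your forward reference to the kernel sizes of \cref{lem:KernelSizes}---which is exactly what the paper itself assigns to the pair $\dice{2}\dice{3}$ in \cref{tbl:Summary}---is what closes that case; since \cref{lem:KernelSizes} does not rely on \cref{prop:GTInvariantsNgtM}, there is no circularity. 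A minor variation: you dispatch $\dice{1}\dice{4}$ via Quillen's rank (correctly, since $\DG{4}$ at $m=1$ has only the three involutions $x^{2^{n-1}}$, $z^{2^{\ell-1}}$, $x^{2^{n-1}}z^{2^{\ell-1}}$), whereas \cref{tbl:Summary} records kernel sizes for that pair; both work.
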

\begin{proof}
    This follows from \cref{lem:Center,prop:Kuelshammer,prop:Quillen}.
\end{proof}

We note that computer experiments for the remaining cases in \cref{prop:GTInvariantsNeqM,prop:GTInvariantsNgtM} show that
the group-theoretical invariants contained in a \textsf{GAP} package \mbox{\textsf{ModIsomExt}} \cite{MM20, MM22}
do not give new information on whether the algebras in question are isomorphic.

\subsection{Kernel sizes}
We will write $\Delta$ for the augmentation ideal $\Delta(G)$ of $\F G$ for brevity.
The next argument we will apply is well known and consists in computing the sizes of the kernels of certain maps.
It can be traced back to an idea of Brauer~\cite[Supplement \S 9]{Bra63}
and was applied in practice for the first time by Passman~\cite{Pas65}.
We define the standard $2^m$-power map
\begin{equation}\label{eq:Phi}
    \varphi\colon \Delta/\Delta^2 \to \Delta^{2^m}/\Delta^{1 + 2^m}.
\end{equation}
The kernel of a power map $\varphi$ is defined to be
\begin{equation}
    K(G) = \set{ R + \Delta^2 \in \Delta/\Delta^2 \given \varphi(R + \Delta^2) = 0 + \Delta^{1 + 2^m} }.
\end{equation}
Note that this kernel is not an ideal of $\Delta$ in general, but still a well-defined set stable under automorphisms of the algebra.

A basis for $\Delta$ makes concrete calculations of the kernels feasible,
and we use what is called a Jennings basis, which behaves well with a filtration $\Delta \supseteq \Delta^2 \supseteq \dotsb$.
We will work with a basis that looks formally the same in each case.
We assume some familiarity with Jennings' theory for the proof of the next lemma.
If the reader is willing to accept it, then one can safely skip its proof because we will not use Jennings' theory explicitly elsewhere.

We write capital letters for elements of the augmentation ideal corresponding to group elements, e.g. $X = x + 1 \in \Delta$ for $x \in G$.
\begin{lem}\label{lem:BasisDelta}
    Let $G = \langle x, y, z \rangle$ be one of the groups $\DG{1}$, \dots, $\DG{6}$ and $k$ a non-negative integer.
    Set $w = z^{2^{\ell - 1}}$ and
    \[
        q = \begin{cases}
            2^\ell              & (\text{$G = \DG{1}$ or $G = \DG{2}$}) \\
            2^{\max\{m, \ell\}} & (\text{$G = \DG{3}$ or $G = \DG{4}$}) \\
            2^{\max\{n, \ell\}} & (\text{$G = \DG{5}$ or $G = \DG{6}$}).
        \end{cases}
    \]
    Then
    \begin{equation}\label{eq:BasisDelta}
        \FD_k = \left\{\, X^rY^sZ^tW^u \mathrel{}\middle|\mathrel{}
            \begin{gathered}
                0 \le r \le 2^n - 1,\ 0 \le s \le 2^m - 1,\\
                0 \le t \le 2^{\ell - 1} - 1,\ 0 \le u \le 1,\\
                r + s + 2t + qu \ge k
            \end{gathered}
        \,\right\}
    \end{equation}
    is a basis of $\Delta^k$.
    In particular, the image of $\FD_k \smallsetminus \FD_{k + 1}$ under the natural projection $\Delta^k \to \Delta^k/\Delta^{k + 1}$ is a basis of $\Delta^k/\Delta^{k + 1}$.
\end{lem}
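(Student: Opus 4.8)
The plan is to deduce this from Jennings' theorem, so that the whole matter reduces to determining the Jennings series of the six groups and exhibiting a compatible generating sequence. Recall that for a finite $p$-group $G$ and a field $\F$ of characteristic $p$ the dimension subgroups $D_k = G \cap (1 + \Delta^k)$ form the Brauer--Jennings--Zassenhaus series, computed by $D_k = \prod_{i p^j \ge k} \Agemo_j(\gamma_i(G))$, and that if $u_1, \dotsc, u_d \in G$ are chosen so that, level by level, their images form $\F_p$-bases of the quotients $D_k/D_{k+1}$ — say $u_i \in D_{e_i} \smallsetminus D_{e_i + 1}$ — then the ordered products $\prod_{i=1}^d (u_i - 1)^{a_i}$ with $0 \le a_i < p$ form an $\F$-basis of $\F G$, and such a product lies in $\Delta^e$ with nonzero image in $\Delta^e/\Delta^{e+1}$, where $e = \sum_i a_i e_i$. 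I would apply this with $p = 2$.

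The first task is to compute $\gamma_i(G)$ and $\Agemo_j(G)$ uniformly across $\DG{1}, \dotsc, \DG{6}$. The commutator relations give $\gamma_i(G) = \langle z^{2^{i-2}} \rangle$ for $i \ge 2$. Since $G/\D{G}$ is abelian, every square lies in $\langle x^2, y^2 \rangle \D{G}$; conversely, using that $x^2$ and $y^2$ are central one finds $(xy)^{2^j} = x^{2^j} y^{2^j} z^{-2^{j-1}}$, so $z^{2^{j-1}} \in \Agemo_j(G)$ for $j \ge 1$, and together with the observation that $\Agemo_1(G) = \langle x^2, y^2, z \rangle$ is abelian (whence $\Agemo_j(G) = \Agemo_{j-1}(\Agemo_1(G))$) this yields $\Agemo_j(G) = \langle x^{2^j}, y^{2^j}, z^{2^{j-1}} \rangle$ for all $j \ge 1$. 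Feeding these into the formula for $D_k$, I would read off that $x^{2^j}$ and $y^{2^j}$ lie in $D_{2^j} \smallsetminus D_{2^j+1}$, that $z^{2^j} \in D_{2^{j+1}} \smallsetminus D_{2^{j+1}+1}$ for $0 \le j \le \ell - 2$, and that $w = z^{2^{\ell-1}}$ lies in $D_q \smallsetminus D_{q+1}$ with $q$ exactly as in the statement: in $\DG{3}, \DG{4}$ the identity $w = y^{2^m}$ puts $w$ also in $\Agemo_m(G)$, and in $\DG{5}, \DG{6}$ the identity $w = x^{2^n}$ puts $w$ also in $\Agemo_n(G)$, giving the extra reach recorded by $q = 2^{\max\{m,\ell\}}$ resp.\ $q = 2^{\max\{n,\ell\}}$, whereas in $\DG{1}, \DG{2}$ there is no such fusion and $q = 2^\ell$.

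I would then take as the Jennings generating sequence, in this order,
\[
    x,\, x^2,\, \dotsc,\, x^{2^{n-1}},\quad y,\, y^2,\, \dotsc,\, y^{2^{m-1}},\quad z,\, z^2,\, \dotsc,\, z^{2^{\ell-2}},\quad w,
\]
which has exactly $n + m + \ell = \log_2 |G|$ elements, so that once one checks that at each weight level these elements are linearly independent in $D_k/D_{k+1}$ they must also span it. The characteristic-$2$ identities $(x^{2^i} - 1) = (x-1)^{2^i}$, $(y^{2^i} - 1) = (y-1)^{2^i}$, $(z^{2^i} - 1) = (z-1)^{2^i}$ collapse each Jennings basis element to the form $X^r Y^s Z^t W^u$ with $0 \le r \le 2^n - 1$, $0 \le s \le 2^m - 1$, $0 \le t \le 2^{\ell-1} - 1$, $0 \le u \le 1$, and its weight $\sum_i a_i e_i$ equals $r + s + 2t + qu$. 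This says precisely that $\FD_0$ is a basis of $\F G$ and, more finely, that $\FD_k$ is a basis of $\Delta^k$ with $\FD_k \smallsetminus \FD_{k+1}$ projecting to a basis of $\Delta^k/\Delta^{k+1}$.

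The main obstacle I expect is the bookkeeping in the dimension-subgroup computation, and within it the handling of $w$: it is the one generator that can be shared between two of the cyclic factors — simultaneously $z^{2^{\ell-1}}$ and, in four of the six groups, $y^{2^m}$ or $x^{2^n}$ — so one must include it only once in the generating sequence, assign it the weight $q$ coming from \emph{all} of its representations, and verify it is not absorbed into $D_{q+1}$ by the other generators. The distinction, recorded in \cref{lem:Center}, between the center being a direct product $\langle x^2 \rangle \times \langle y^2 \rangle \times \dotsb$ and having a diagonal factor is what guarantees the required independence of the $x$- and $y$-powers from the $z$-powers and from $w$. A secondary, purely routine, point is justifying $\Agemo_j(G) = \langle x^{2^j}, y^{2^j}, z^{2^{j-1}} \rangle$ without a collection formula, for which the abelianness of $\Agemo_1(G)$ is the convenient shortcut.
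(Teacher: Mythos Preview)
Your approach is correct and essentially the same as the paper's: both arguments compute the dimension subgroups, identify the generating sequence $x, x^2, \dotsc, x^{2^{n-1}}, y, \dotsc, y^{2^{m-1}}, z, \dotsc, z^{2^{\ell-2}}, w$ with the stated weights (including the case analysis for $q$), collapse the Jennings basis monomials to the form $X^r Y^s Z^t W^u$ via the characteristic-$2$ identities and the centrality of $x^2, y^2$, and invoke Jennings' theorem. The only cosmetic difference is that the paper quotes a ready-made formula $M_k = \Agemo_e(G)\Agemo_{e-1}(\D{G})$ (valid because $\D{G}$ and $\Frat(G)$ are abelian) from \cite{DdSMS99} in place of your direct use of the Lazard formula $D_k = \prod_{ip^j \ge k} \Agemo_j(\gamma_i(G))$; these yield the same subgroups here.
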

\begin{proof}
    Let $M_k = G \cap (1 + \Delta^k)$, the $k$th dimension subgroup of $G$, for $k \ge 1$.
    Observe first that $\D{G} = \langle z \rangle$ and $\Frat(G) = \langle x^2, y^2, z \rangle$ are abelian.
    It follows from~\cite[Theorems 11.2 and 12.9]{DdSMS99} that $M_1 = \langle x, y \rangle$ and
    \begin{equation}
        2^{e - 1} < k \le 2^e \implies M_k = \Agemo_e(G)\Agemo_{e - 1}(\D{G}) = \langle x^{2^e}, y^{2^e}, z^{2^{e - 1}} \rangle
    \end{equation}
    for $e \ge 1$.
    In particular, we have
    \begin{alignat*}{3}
        0 &\le e \le n - 1    &\quad &\implies &\quad x^{2^e}       &\in M_{2^e} \smallsetminus M_{1 + 2^e}, \\
        0 &\le e \le m - 1    &\quad &\implies &\quad y^{2^e}       &\in M_{2^e} \smallsetminus M_{1 + 2^e}, \\
        1 &\le e \le \ell - 1 &\quad &\implies &\quad z^{2^{e - 1}} &\in M_{2^e} \smallsetminus M_{1 + 2^e}
    \end{alignat*}
    and by the relations of $G$,
    \[
        w \in M_{q} \smallsetminus M_{1 + q}.
    \]
    Since $x^2$ and $y^2$ are central, an element of the Jennings basis can be written in the form
    \begin{align*}
        &(x + 1)^{r_0} (y + 1)^{s_0} (x^2 + 1)^{r_1} (y^2 + 1)^{s_1} (z + 1)^{t_0} \dotsm (w + 1)^u \dotsm \\
        &\qquad= (x + 1)^{r_0} (x^2 + 1)^{r_1} \dotsm (y + 1)^{s_0} (y^2 + 1)^{s_1} \dotsm (z + 1)^{t_0} \dotsm (w + 1)^u \\
        &\qquad= (x + 1)^{r_0} (x + 1)^{r_1 2} \dotsm (y + 1)^{s_0} (y + 1)^{s_1 2} \dotsm (z + 1)^{t_0} \dotsm (w + 1)^u \\
        &\qquad= (x + 1)^{r_0 + \dotsm + r_{n - 1} 2^{n - 1}} (y + 1)^{s_0 + \dotsm + s_{m - 1} 2^{m - 1}} (z + 1)^{t_0 + \dotsm + t_{\ell - 2} 2^{\ell - 2}} (w + 1)^u \\
        &\qquad= (x + 1)^r (y + 1)^s (z + 1)^t (w + 1)^u \\
        &\qquad= X^r Y^s Z^t W^u
    \end{align*}
    for
    \[
        0 \le r_0, r_1, \dotsc, r_{n - 1}, s_0, s_1, \dotsc, s_{m - 1}, t_0, \dotsc, t_{\ell - 2}, u \le 1
    \]
    where
    $r = r_0 + \dotsb + r_{n - 1} 2^{n - 1}$,
    $s = s_0 + \dotsb + s_{m - 1} 2^{m - 1}$ and
    $t = t_0 + \dotsb + t_{\ell - 2} 2^{\ell - 2}$.
    Hence the assertion follows from Jennings' theorem~\cite[Theorem 3.2]{Jen41}.
\end{proof}

The \emph{weight} of an element of $\FD_k \smallsetminus \FD_{k + 1}$ is defined to be $k$.

To aid the reader in the following calculations, we explicitly state
the bases of the first few layers of the subsequent quotients of the augmentation ideal power series when $n \ge m \ge 2$:
\begin{align*}
    \Delta^1/\Delta^2 &: & \FD_1 \smallsetminus \FD_2 &= \{ X, Y \}, \\
    \Delta^2/\Delta^3 &: & \FD_2 \smallsetminus \FD_3 &= \{ X^2, XY, Y^2, Z \}, \\
    \Delta^3/\Delta^4 &: & \FD_3 \smallsetminus \FD_4 &= \{ X^3, X^2Y, XY^2, XZ, Y^3, YZ \}. \\
\end{align*}

We will collect some elementary relations between the elements of the basis,
which we will use without further mention:
\begin{align*}
    XY + YX &= Z + XZ + YZ + XYZ, \\
    XY + YX &\equiv Z \mod \Delta^3.
\end{align*}

Now a more concrete expression of the power map \eqref{eq:Phi} can be obtained.

\begin{lem}\label{lem:FormulaPowerMap}
    Let $G = \langle x, y, z \rangle$ be one of the groups $\DG{1}$, \dots, $\DG{6}$ and $\alpha, \beta \in \F$.
    Then
    \[
        \varphi(\alpha X + \beta Y + \Delta^2)
        = \alpha^{2^m} X^{2^m} + \beta^{2^m} Y^{2^m} + (\alpha\beta)^{2^{m - 1}}Z^{2^{m - 1}} + \Delta^{1 + 2^m}.
    \]
\end{lem}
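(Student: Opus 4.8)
The plan is to compute the $2^m$-th power of a representative directly. Write $R = \alpha X + \beta Y \in \Delta$, so that $\varphi(\alpha X + \beta Y + \Delta^2) = R^{2^m} + \Delta^{1 + 2^m}$, and square once: since $\F$ has characteristic~$2$, $R^2 = \alpha^2 X^2 + \beta^2 Y^2 + \alpha\beta(XY + YX)$. Two commutation facts will carry the argument. First, $X^2 = x^2 + 1$ and $Y^2 = y^2 + 1$ lie in $\F\Z(G)$ by \cref{lem:Center}, so $u := \alpha^2 X^2 + \beta^2 Y^2$ is central in $\F G$. Second, $XY + YX = xy + yx = xy(1 + z) = xy\,Z$ by the relation $yx = xyz$, and $xy$ commutes with $z$ --- hence with $Z$ --- since $[z, xy] = [z, y][z, x]^y = z^{-2} z^2 = 1$, exactly as in the proof of \cref{thm:IsomorphicAlgebras}. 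Therefore $R^2 = u + v$, where $v := \alpha\beta\,(xy)\,Z$, with $u$ central and $xy$, $Z$ commuting.

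Next I would iterate the squaring. Because $u$ is central, $(u + v)^2 = u^2 + v^2$ in characteristic~$2$, and $u^2$ is again central; an immediate induction yields $R^{2^m} = u^{2^{m - 1}} + v^{2^{m - 1}}$ for $m \ge 1$. Using that $X^2$ and $Y^2$ commute, $u^{2^{m - 1}} = \alpha^{2^m} X^{2^m} + \beta^{2^m} Y^{2^m}$, and using that $xy$ commutes with $Z$, $v^{2^{m - 1}} = (\alpha\beta)^{2^{m - 1}} (xy)^{2^{m - 1}} Z^{2^{m - 1}}$. It then remains only to see that the factor $(xy)^{2^{m - 1}}$ can be dropped modulo $\Delta^{1 + 2^m}$.

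For that last point, write $xy = 1 + T$ with $T = xy + 1 \in \Delta$; the Frobenius identity gives $(xy)^{2^{m - 1}} = 1 + T^{2^{m - 1}}$, hence $(xy)^{2^{m - 1}} Z^{2^{m - 1}} - Z^{2^{m - 1}} = T^{2^{m - 1}} Z^{2^{m - 1}}$. Now $T^{2^{m - 1}} \in \Delta^{2^{m - 1}}$, while $Z = z + 1 \in \Delta^2$ (as $z$ lies in the second dimension subgroup, cf.\ \cref{lem:BasisDelta}), so $Z^{2^{m - 1}} \in \Delta^{2^m}$; the error term therefore lies in $\Delta^{2^{m - 1} + 2^m} \subseteq \Delta^{1 + 2^m}$, and the claimed formula follows.

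The main obstacle, such as it is, lies entirely in the non-commutativity bookkeeping: the collapse $(u + v)^{2^k} = u^{2^k} + v^{2^k}$ relies on the two commutation facts above, and without them one would have to control a genuinely non-abelian binomial expansion. Once they are in place the computation is routine and, reassuringly, has exactly the same shape for all six groups $\DG{1}, \dots, \DG{6}$, since it uses only the common commutator relations $[y,x] = z$, $[z,x] = [z,y] = z^{-2}$ and the centrality of $x^2$ and $y^2$.
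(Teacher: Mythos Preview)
Your proof is correct and close in spirit to the paper's, but the execution differs in a way worth noting. The paper argues by first observing $(\alpha X + \beta Y)^2 \equiv \alpha^2 X^2 + \beta^2 Y^2 + \alpha\beta Z \bmod \Delta^3$ (using only $XY + YX \equiv Z \bmod \Delta^3$), and then applies the $2^{m-1}$-power map to this congruence, relying implicitly on the well-definedness of the power map $\Delta^2/\Delta^3 \to \Delta^{2^m}/\Delta^{1+2^m}$ together with the centrality of $X^2$ and $Y^2$. You instead keep the exact equality $XY + YX = (xy)Z$ and use the additional fact $[xy, z] = 1$ to split $R^{2^m}$ \emph{exactly} as $u^{2^{m-1}} + v^{2^{m-1}}$, postponing the estimate modulo $\Delta^{1+2^m}$ to the final step. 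Your route is a bit longer but more self-contained: it does not appeal to the general well-definedness of power maps on the augmentation filtration, trading that for the commutation identity already established in \cref{thm:IsomorphicAlgebras}.
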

\begin{proof}
    In general we have
    \begin{align*}
        (\alpha X + \beta Y)^2
        &= \alpha^2 X^2 + \beta^2 Y^2 + (\alpha \beta)(XY + YX) \\
        &\equiv \alpha^2 X^2 + \beta^2 Y^2 + (\alpha \beta)Z \mod \Delta^3.
    \end{align*}
    Since $X^2$ and $Y^2$ are central in $\F G$, we obtain
    \[
        \varphi(\alpha X + \beta Y + \Delta^2)
        = \alpha^{2^m} X^{2^m} + \beta^{2^m} Y^{2^m} + (\alpha\beta)^{2^{m - 1}}Z^{2^{m - 1}} + \Delta^{1 + 2^m}.
        \qedhere
    \]
\end{proof}

\begin{lem}\label{lem:KernelSizes}
    Assume $n > m$ and let $G$ be one of the groups $\DG{1}$, \dots, $\DG{6}$.
    Then the kernel $K(G)$ of the power map $\varphi$ has the following cardinality as shown in \cref{tbl:KernelSizes}.
    \begin{table}[htbp]
        \centering
        \begin{tabular}{cccc}
            \toprule
             & $m > \ell$ & $m = \ell$ & $m < \ell$ \\
            \midrule
            \dice{1} & $|\F|$ & $|\F|$ & $|\F|$ \\
            \dice{2} & $|\F|$ & $1$    & $1$    \\
            \dice{3} & $1$    & $1$    & $|\F|$ \\
            \dice{4} & $1$    & $|\F|$ & $1$    \\
            \dice{5} & $|\F|$ & $|\F|$ & $|\F|$ \\
            \dice{6} & $|\F|$ & $1$    & $1$    \\
            \bottomrule \\
        \end{tabular}
        \caption{
            Kernel sizes $|K(G)|$ for $n > m$.
        }
        \label{tbl:KernelSizes}
    \end{table}
\end{lem}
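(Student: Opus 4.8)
The plan is to compute $K(G)$ directly from the explicit formula for the power map in \cref{lem:FormulaPowerMap}. Write a generic element of $\Delta/\Delta^2$ as $\alpha X + \beta Y + \Delta^2$ with $\alpha, \beta \in \F$; by \cref{lem:FormulaPowerMap} its image under $\varphi$ is
\[
    \alpha^{2^m} X^{2^m} + \beta^{2^m} Y^{2^m} + (\alpha\beta)^{2^{m - 1}} Z^{2^{m - 1}} + \Delta^{1 + 2^m}.
\]
The element lies in $K(G)$ if and only if the right-hand side is zero in $\Delta^{2^m}/\Delta^{1 + 2^m}$. To decide this we need to know, for each of the six groups, which of the elements $X^{2^m}$, $Y^{2^m}$, $Z^{2^{m-1}}$ lie in $\Delta^{2^m}$ versus the strictly higher power $\Delta^{1 + 2^m}$, and what linear relations they satisfy modulo $\Delta^{1 + 2^m}$. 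This is exactly the information encoded in the weights of the Jennings basis elements from \cref{lem:BasisDelta} together with the defining relations of the group.

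First I would record the relevant weights. Since $x$ has order at least $2^n$ and $n > m$, the basis element $X^{2^m} = (x+1)^{2^m}$ has weight exactly $2^m$ in every case, so it is a nonzero element of $\Delta^{2^m}/\Delta^{1+2^m}$; in particular it never vanishes there, and it is a genuine basis vector. For $Y^{2^m}$ and $Z^{2^{m-1}}$ one must use the power relations: e.g. in $\DG{1}$ and $\DG{5}$ we have $y^{2^m} = 1$, so $Y^{2^m} \in \Delta^{1 + 2^m}$, while in $\DG{2}$ and $\DG{6}$ we have $y^{2^m} = x^{2^m}$, so $Y^{2^m} \equiv X^{2^m}$ modulo higher terms; the cases $\DG{3}$, $\DG{4}$ involve $y^{2^m} = z^{2^{\ell-1}}$ or $x^{2^m}z^{2^{\ell-1}}$, and here whether $Z^{2^{m-1}}$ survives depends on comparing $2^{m-1}$ with $2^{\ell-1}$, i.e. on whether $m > \ell$, $m = \ell$, or $m < \ell$ — which is precisely the three-column split in \cref{tbl:KernelSizes}. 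Similarly $Z^{2^{m-1}}$, when $z$ has order $2^\ell$, has weight $\min\{2\cdot 2^{m-1}, 2^m\} = 2^m$ if $m \le \ell$ (so it survives and is independent of $X^{2^m}$) but lies in $\Delta^{1+2^m}$ if $m > \ell$ (since then $z^{2^{m-1}} = 1$). I would tabulate, for each group and each of the three regimes of $m$ versus $\ell$, the images $\varphi(\alpha X + \beta Y + \Delta^2)$ as a polynomial identity in the surviving basis vectors $X^{2^m}$ and possibly $Z^{2^{m-1}}$.

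Once the formula is made explicit in each cell, solving $\varphi(\alpha X + \beta Y + \Delta^2) = 0$ is elementary. Typically one gets either a single condition like $\alpha^{2^m} = 0$ (forcing $\alpha = 0$, leaving $\beta$ free, so $|K(G)| = |\F|$), or a pair of independent conditions $\alpha^{2^m} = \beta^{2^m} = 0$ (forcing $\alpha = \beta = 0$, so $|K(G)| = 1$), or a mixed condition such as $\alpha^{2^m} + \beta^{2^m} = 0$ together with $(\alpha\beta)^{2^{m-1}} = 0$ — and using that $u \mapsto u^{2^m}$ is a bijection on $\F$ (so $\alpha^{2^m} = \beta^{2^m}$ iff $\alpha = \beta$, and a vanishing power forces the base to vanish) one reads off $|K(G)| \in \{1, |\F|\}$ in each case. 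Matching these against \cref{tbl:KernelSizes} completes the proof. The main obstacle — really the only place where care is needed — is the bookkeeping of weights for $Y^{2^m}$ and $Z^{2^{m-1}}$ across the six groups and three regimes, in particular correctly handling the boundary case $m = \ell$, where $Z^{2^{m-1}}$ and the relation term $z^{2^{\ell-1}}$ collide and one must check whether they cancel or reinforce the $X^{2^m}$-coefficient; everything else is routine linear algebra over $\F$.
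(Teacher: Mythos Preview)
Your plan is essentially identical to the paper's proof: both start from the formula in \cref{lem:FormulaPowerMap}, rewrite $X^{2^m}$, $Y^{2^m}$, $Z^{2^{m-1}}$ in terms of the Jennings basis of \cref{lem:BasisDelta} using the defining relations of each group (handling $\DG{1}$ with $\DG{5}$ and $\DG{2}$ with $\DG{6}$ together since $n>m$ makes the relation $x^{2^n}=z^{2^{\ell-1}}$ invisible at this level), split into the three regimes $m>\ell$, $m=\ell$, $m<\ell$, and then read off the kernel from the resulting linear conditions via injectivity of Frobenius. The only point where the paper is slightly more explicit is in writing out the identity $Y^{2^m}=X^{2^m}+W+X^{2^m}W$ for $\DG{4}$ and in tabulating the six resulting expressions; your sketch already anticipates exactly this bookkeeping and correctly flags the $m=\ell$ boundary as the delicate case.
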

\begin{proof}
    Let $G = \langle x, y, z \rangle$.
    Every element of $\Delta/\Delta^2$ can be written in the form
    $\alpha X + \beta Y + \Delta^2$
    and the scalars $\alpha, \beta \in \F$ are unique by \cref{lem:BasisDelta}.
    To measure the size of the kernel, we write
    $\varphi(\alpha X + \beta Y + \Delta^2)$
    in \cref{lem:FormulaPowerMap} as a linear combination of the images of 
    $\FD_{2^m} \smallsetminus \FD_{1 + 2^m}$ in $\Delta^{2^m}/\Delta^{1+2^m}$ for each case.
    Note that as $n > m$, the differences between $\DG{1}$ and $\DG{5}$ as well as between $\DG{2}$ and $\DG{6}$
    cannot enter into the expression of
    $\varphi(\alpha X + \beta Y + \Delta^2)$,
    so we handle these two pairs of groups simultaneously.
    We also use the equation
    $Y^{2^m} = X^{2^m} + W + X^{2^m}W$
    for $G = \DG{4}$.

    For $m > \ell$, as $Z^{2^{m - 1}}$ vanishes,
    $\varphi(\alpha X + \beta Y + \Delta^2)$ is written as follows.
    \begin{align*}
        \dice{1}\dice{5} &: &                                \alpha^{2^m}X^{2^m} + \Delta^{1 + 2^m}; & \\
        \dice{2}\dice{6} &: &               (\alpha^{2^m} + \beta^{2^m}) X^{2^m} + \Delta^{1 + 2^m}; & \\
        \dice{3}         &: &                 \alpha^{2^m}X^{2^m} + \beta^{2^m}W + \Delta^{1 + 2^m}; & \\
        \dice{4}         &: & (\alpha^{2^m} + \beta^{2^m})X^{2^m} + \beta^{2^m}W + \Delta^{1 + 2^m}. &
    \end{align*}

    Similarly, for $m < \ell$, the following is obtained.
    \begin{align*}
        \dice{1}\dice{5} &: &                  \alpha^{2^m}X^{2^m} + (\alpha\beta)^{2^{m - 1}} Z^{2^{m - 1}} + \Delta^{1 + 2^m}; & \\
        \dice{2}\dice{6} &: & (\alpha^{2^m} + \beta^{2^m}) X^{2^m} + (\alpha\beta)^{2^{m - 1}} Z^{2^{m - 1}} + \Delta^{1 + 2^m}; & \\
        \dice{3}         &: &                  \alpha^{2^m}X^{2^m} + (\alpha\beta)^{2^{m - 1}} Z^{2^{m - 1}} + \Delta^{1 + 2^m}; & \\
        \dice{4}         &: &  (\alpha^{2^m} + \beta^{2^m})X^{2^m} + (\alpha\beta)^{2^{m - 1}} Z^{2^{m - 1}} + \Delta^{1 + 2^m}. &
    \end{align*}

    Finally, for $m = \ell$, the expressions for $\DG{1}$, $\DG{2}$, $\DG{5}$ and $\DG{6}$ are as in the case $m < \ell$
    and for the other two we have the following.
    \begin{align*}
        \dice{3} &: &                 \alpha^{2^m}X^{2^m} + (\beta^{2^m} + (\alpha\beta)^{2^{m - 1}}) W + \Delta^{1 + 2^m}; & \\
        \dice{4} &: & (\alpha^{2^m} + \beta^{2^m})X^{2^m} + (\beta^{2^m} + (\alpha\beta)^{2^{m - 1}}) W + \Delta^{1 + 2^m}. &
    \end{align*}

    All of the terms present are linearly independent by \cref{lem:BasisDelta}.
    Hence $\alpha X + \beta Y + \Delta^2$ belongs to the kernel $K(G)$
    if and only if all of the coefficients present are equal to zero.
    Since the Frobenius map on $\F$ is injective,
    the cardinalities for $K(G)$ now follow easily.
\end{proof}

We remark that in the case $n = m$ the kernel size can also give some useful information,
but only if one restricts the possibilities for $\F$.
As we prefer to work independently of the base field,
we do not include the corresponding calculations.

We summarize the progress we have made on the proof of \cref{main:A} in this section.
\begin{prop}\label{prop:RemainingCases}
    If $G$ and $H$ are one of the groups $\DG{1}$, \dots, $\DG{6}$ with $\F G \cong \F H$ but $G \not\cong H$,
    then $G$ and $H$ are isomorphic to one of the following groups.
    \begin{enumerate}
        \item $\DG{1}$ and $\DG{2}$ with $n > m > \ell$.
        \item $\DG{3}$ and $\DG{4}$ with $n > m > \ell$.
        \item $\DG{5}$ and $\DG{6}$ with $n > m > \ell$.
        \item $\DG{5}$ and $\DG{6}$ with $n = m \ge 2$.
    \end{enumerate}
\end{prop}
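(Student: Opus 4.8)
The plan is to combine the results already obtained in this section. First I would record that the parameters $n$, $m$ and $\ell$ are invariants of the modular group algebra: the abelianization $G/\D{G}\cong\Cyc{2^n}\times\Cyc{2^m}$ is recovered from $\F G/([\F G,\F G]\F G)$ and, together with the convention $n\ge m$, determines $n$ and $m$, after which $|\D{G}|=2^\ell$ is determined by $|G|$. Hence, whenever $\F G\cong\F H$ with $G$ and $H$ among $\DG{1},\dots,\DG{6}$, the two groups carry the same triple $(n,m,\ell)$, and it is legitimate to compare their invariants entry by entry (cf.\ \cref{tbl:Summary}).

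Next I would split according to whether $n=m$ or $n>m$. The case $n=m$ is handled in full by \cref{prop:GTInvariantsNeqM}: when $n=m=1$ the groups have maximal class and the modular isomorphism problem is known to have a positive answer, so no counterexample arises, and when $n=m\ge2$ the only surviving possibility is $\{G,H\}=\{\DG{5},\DG{6}\}$, which is item~(4). For $n>m$, \cref{prop:GTInvariantsNgtM} already reduces the possibilities to $\{G,H\}$ being one of $\{\DG{1},\DG{2}\}$, $\{\DG{3},\DG{4}\}$ or $\{\DG{5},\DG{6}\}$.

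It then remains, still assuming $n>m$, to force $m>\ell$ in each of these three pairs, and for this I would invoke \cref{lem:KernelSizes}. The cardinality $|K(G)|$ of the kernel of the power map $\varphi$ is canonical, hence an invariant of $\F G$; reading off the columns $m=\ell$ and $m<\ell$ of \cref{tbl:KernelSizes}, in each of the three pairs one group has $|K|=|\F|$ while the other has $|K|=1$, and since a field has at least two elements we have $|\F|\ne1$. Thus these kernels genuinely differ in size, so the two group algebras cannot be isomorphic whenever $m\le\ell$. Consequently any counterexample among the three pairs must satisfy $n>m>\ell$, which gives items~(1)--(3); collecting the four cases completes the proof. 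There is no real obstacle here---the statement is essentially a ledger of the invariants proved in this section---so the only points needing care are the invariance of $(n,m,\ell)$ and the harmless remark that $|\F|>1$, both of which are used above.
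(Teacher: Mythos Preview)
Your proof is correct and follows essentially the same approach as the paper: invoke \cref{prop:GTInvariantsNeqM} for the case $n=m$ to isolate item~(4), invoke \cref{prop:GTInvariantsNgtM} for $n>m$ to reduce to the three pairs, and then use \cref{lem:KernelSizes} to rule out $m\le\ell$. Your explicit remarks on the invariance of $(n,m,\ell)$ and on $|\F|>1$ are welcome clarifications but do not alter the route.
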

\begin{proof}
    By \cref{prop:GTInvariantsNeqM}, we may assume that $n > m$.
    It follows from \cref{prop:GTInvariantsNgtM} that $G$ and $H$ are isomorphic to one of the followings:
    $\DG{1}$ and $\DG{2}$, $\DG{3}$ and $\DG{4}$, $\DG{5}$ and $\DG{6}$.
    If $m = \ell$ or $m < \ell$,
    then the groups in each pair have different kernel sizes by \cref{lem:KernelSizes}, which leads to a contradiction.
    Hence we have $m > \ell$.
\end{proof}

Some more information can be obtained from canonical ideals.
The following proposition could be used to further reduce the cases,
though we will not use it in this sense, but rather differently later on.
It seems to have the potential to also be applicable to other classes of groups.

\begin{prop}\label{prop:AgemoCenter}
    Let $G$ be a finite $p$-group and $\F$ a field of characteristic~$p$.
    Assume that $\D{G}$ is abelian of exponent $p^\ell$.
    Then
    \[
        \F(\Agemo_r(\Z(G))) = \Agemo_r(\Z(\F G))
    \]
    for every integer $r \ge \ell$ and, in particular, $\Delta(\Agemo_r(\Z(G)))$ is canonical in $\F G$.
    Consequently, if $\F G \cong \F H$ then $\F[G/\Agemo_r(\Z(G))] \cong \F[H/\Agemo_r(\Z(H))]$.
\end{prop}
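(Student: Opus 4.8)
The plan is to prove the two displayed identities $\F(\Agemo_r(\Z(G))) = \Agemo_r(\Z(\F G))$ and then deduce the canonicity and the final consequence. The inclusion $\F(\Agemo_r(\Z(G))) \subseteq \Agemo_r(\Z(\F G))$ is cheap: for $g \in \Z(G)$ the element $g^{p^r}$ is again central in $G$, hence $g^{p^r} = (g)^{p^r}$ lies in $\Agemo_r(\Z(\F G))$, and since such elements span $\F(\Agemo_r(\Z(G)))$ over $\F$ and $\Agemo_r(\Z(\F G))$ is an $\F$-subspace, we are done. For the reverse inclusion, one must show that for any $T \in \Z(\F G)$ one has $T^{p^r} \in \F(\Agemo_r(\Z(G)))$. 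Here I would use the decomposition $\Z(\F G) = \F\Z(G) \oplus (\Z(\F G) \cap [\F G, \F G])$ recalled in the Preliminaries: write $T = \sum_{g \in \Z(G)} \alpha_g g + C$ where $C$ is a linear combination of class sums $\widehat{g}$ of non-central $g$. Raising to the $p$-th power is additive modulo the ideal generated by Lie commutators in characteristic $p$; more precisely, in $\F G$ the map $\Lambda \mapsto \Lambda^p$ is additive on $\Z(\F G)$ because cross terms are sums of commutators which vanish under the $p$-th power restricted to the center (this is the standard fact underlying the restricted-Lie structure on $\Z(\F G)$). Iterating, $T^{p^r} = \sum_g \alpha_g^{p^r} g^{p^r} + C^{p^r}$, and the first sum already lies in $\F(\Agemo_r(\Z(G)))$.

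The crux is therefore to show $C^{p^r} \in \F(\Agemo_r(\Z(G)))$ when $r \ge \ell$. For a single non-central $g$, let $p^k$ be the size of the conjugacy class of $g$ (so $k \ge 1$); then $\widehat{g}^{\,p^k}$ is known to equal $\sum_{h \sim g} h^{p^k}$ up to an element of a higher term of the radical filtration — but more usefully, I would argue as follows. Since $\D{G}$ is abelian of exponent $p^\ell$, for any $g \in G$ and any conjugate $g^x = g[g,x]$ with $[g,x] \in \D{G}$, the element $(g^x)^{p^\ell}$ equals $g^{p^\ell}[g,x]^{p^\ell}\cdot(\text{correction in } \D{G})$; using that $\D{G}$ is abelian and has exponent $p^\ell$ one gets $(g^x)^{p^r} = g^{p^r}$ for every $r \ge \ell$, i.e. all elements of a conjugacy class have the same $p^r$-th power once $r \ge \ell$, and that common power is central. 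Consequently $\widehat{g}^{\,p^r} = \big(\sum_{h \sim g} h\big)^{p^r}$, and applying $p$-th-power additivity again (now the class sum is not central, but its $p^k$-th power, $p^k = |{\rm class}|$, is central by K\"ulshammer-type arguments, and $r \ge \ell \ge k$ is not automatic — so instead I would simply expand $\widehat{g}^{\,p^r}$ directly using $p^r$-th power additivity over $\F$: $\widehat{g}^{\,p^r}$ is congruent to $\sum_{h \sim g} h^{p^r}$ modulo sums of commutators, and since $h^{p^r} = g^{p^r}$ is a fixed central element independent of $h$, the commutator terms must cancel and $\widehat{g}^{\,p^r} = p^k\cdot g^{p^r} = 0$ or $g^{p^r}$ according to parity of $p^k$, in any case an element of $\F(\Agemo_r(\Z(G)))$). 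By $\F$-bilinearity of the $p^r$-power on $\Z(\F G)$ again, $C^{p^r}$ is an $\F$-combination of such terms, finishing the reverse inclusion.

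With both inclusions in hand, $\F(\Agemo_r(\Z(G))) = \Agemo_r(\Z(\F G))$ is a canonical subalgebra of $\F G$ since $\Z(\F G)$ and the operator $\Agemo_r(-)$ are both preserved by algebra automorphisms. Hence its augmentation ideal $\Delta(\Agemo_r(\Z(G)))$ — intrinsically described as the unique maximal ideal of the local algebra $\F(\Agemo_r(\Z(G)))$, or as $\F(\Agemo_r(\Z(G))) \cap \Delta(G)$ — is canonical in $\F G$. Finally, for the last statement, recall that whenever $\Delta(N)\F G$ is a canonical ideal of $\F G$ (which holds here because $\Delta(\Agemo_r(\Z(G)))\F G$ is the ideal generated by the canonical set $\Delta(\Agemo_r(\Z(G)))$), the quotient $\F G/\Delta(N)\F G \cong \F[G/N]$ is determined up to isomorphism by $\F G$; applying this with $N = \Agemo_r(\Z(G))$ gives $\F[G/\Agemo_r(\Z(G))] \cong \F[H/\Agemo_r(\Z(H))]$ from $\F G \cong \F H$. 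The main obstacle I anticipate is making the "$p^r$-th power is additive modulo commutators, and the commutator contributions vanish" step fully rigorous for class sums; the clean route is to invoke the restricted Lie algebra structure on $\Z(\F G)$ together with the hypothesis $r \ge \ell$ to kill all non-central contributions, and I would cite~\cite[Section III.6]{Seh78} for the background.
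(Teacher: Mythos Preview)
Your overall strategy matches the paper's: use the decomposition $\Z(\F G) = \F\Z(G) \oplus (\Z(\F G)\cap[\F G,\F G])$, observe that the $p^r$-th power map is genuinely additive on the commutative algebra $\Z(\F G)$, and reduce to computing $\widehat{g}^{\,p^r}$ for a non-central $g$. The gap is precisely at that last step.

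Your computation of $\widehat{g}^{\,p^r}$ tries to expand $(\sum_{h\sim g} h)^{p^r}$ via ``$p^r$-th power additivity modulo commutators'', but the individual conjugates $h$ do not commute with one another in general, so this expansion is not available inside $\Z(\F G)$; the claim that the commutator corrections ``must cancel'' is unjustified (their sum being central does not force it to vanish). Likewise, the assertion $(g^x)^{p^r} = g^{p^r}$ for $r\ge \ell$ amounts to saying $g^{p^r}$ is central, and your sketch does not establish this: the ``correction in $\D{G}$'' you allude to need not be a $p^\ell$-th power in $\D{G}$. Finally, $p^k$ is always $0$ in $\F$ for $k\ge 1$, so ``$0$ or $g^{p^r}$ according to parity of $p^k$'' is not a meaningful dichotomy.

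The paper sidesteps all of this with one observation. Write the class of $g$ as $\{ga_1,\dots,ga_s\}$ with $a_i\in\D{G}$ and set $S = a_1+\dots+a_s \in \F\D{G}$. The class sum $gS$ is central in $\F G$, hence commutes with $g$, so $S$ commutes with $g$; and the $a_i$ commute with each other because $\D{G}$ is abelian. Therefore
\[
(gS)^{p^r} \;=\; g^{p^r}S^{p^r} \;=\; g^{p^r}\bigl(a_1^{p^r}+\dots+a_s^{p^r}\bigr) \;=\; g^{p^r}\cdot s \;=\; 0,
\]
using that $\D{G}$ has exponent $p^\ell\le p^r$ and that $p\mid s$. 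Thus every non-central class sum has $p^r$-th power equal to $0$, and the reverse inclusion follows at once. Replacing your middle paragraph with this factorisation $gS$ makes the argument complete.
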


\begin{proof}
    Recall that $\Z(\F G)$ has a basis that is given by class sums
    and a decomposition $\Z(\F G) = \F \Z(G) \oplus (\Z(\F G) \cap [\F G, \F G])$.
    We will show that class sums of non-central elements vanish under a $p^r$-power.
    So let $g \in G$ be a non-central element with conjugacy class
    $\{ ga_1, \dotsc, ga_s \}$ for some $a_1, \dotsc, a_s \in \D{G}$.
    Note that $p$ divides $s$.
    The class sum of $g$ equals $g(a_1 + \dotsb + a_s)$.
    As this element is central in $\F G$, it commutes in particular with $g$,
    hence $(a_1 + \dotsb + a_s)$ commutes with $g$.
    As $\D{G}$ is abelian of exponent $p^\ell \le p^r$, we conclude that
    \[
        (g(a_1 + \dotsb + a_s))^{p^r} = g^{p^r}(a_1^{p^r} + \dotsb + a_s^{p^r}) =  g^{p^r} s = 0.
        \qedhere
    \]
\end{proof}

The next lemma is an easy observation which will be helpful in some calculations.

\begin{lem}\label{lem:XYCommIsCentral}
    Let $G = \langle x, y, z \rangle$ be one of the groups $\DG{1}$, \dots, $\DG{6}$.
    Then $[X, Y]$ is central in $\F G$.
\end{lem}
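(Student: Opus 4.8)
The plan is to recognize the Lie commutator $[X, Y]$ as a class sum in $\F G$ and then invoke the standard fact that class sums lie in $\Z(\F G)$. First I would expand, using that the characteristic is~$2$,
\[
    [X, Y] = XY - YX = XY + YX = (x + 1)(y + 1) + (y + 1)(x + 1) = xy + yx,
\]
since the linear terms $x$, $y$ and the constant $1$ each occur twice and cancel. So it suffices to show that $xy + yx \in \Z(\F G)$.

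Next I would check that $\{xy, yx\}$ is a conjugacy class of $G$. The only group-theoretic input needed is that $x^2$ and $y^2$ are central in $G$, which is \cref{lem:Center} and is in any case immediate from the defining relations, uniformly for all six groups. From the centrality of $x^2$ and $y^2$ one gets $(xy)^x = yx$, $(xy)^y = y^{-1}xy^2 = yx$, and symmetrically $(yx)^x = (yx)^y = xy$; hence the two-element set $\{xy, yx\}$ is stable under conjugation by $x$ and by $y$, so—since $G = \langle x, y\rangle$—under conjugation by every element of $G$. As $yx = xyz \ne xy$ (the groups are non-abelian), $\{xy, yx\}$ is exactly the conjugacy class of $xy$, so its class sum $xy + yx$ is central in $\F G$. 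This gives $[X, Y] \in \Z(\F G)$.

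I do not expect any real obstacle here: the computation is short, and the point worth noting is only that it is uniform across $\DG{1}, \dots, \DG{6}$, because the six presentations differ solely in their power relations while sharing all commutator relations—hence $x^2$ and $y^2$ are central and act the same way on $\{xy, yx\}$ in every case. The one place to be slightly careful is the claim that $\{xy, yx\}$ is a \emph{full} conjugacy class and not merely contained in one, which is exactly what two-generation of $G$ supplies; alternatively one could avoid even this by observing that $xy + yx$ is either $0$ or a class sum, and both are central.
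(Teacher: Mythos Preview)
Your proposal is correct and follows essentially the same approach as the paper: expand $[X,Y]=xy+yx$, verify that $\{xy,yx\}$ is a conjugacy class by conjugating with the generators $x$ and $y$, and conclude that the class sum is central. The paper's write-up is slightly terser (it rewrites $yx$ as $xyz$ and checks $(xy)^x=(xy)^y=xyz$ and $(xyz)^x=(xyz)^y=xy$), but the argument is the same.
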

\begin{proof}
    We have $[X, Y] = xy + yx = xy + xyz$.
    As $(xy)^x = xyz = (xy)^y$ and $(xyz)^x = xy = (xyz)^y$,
    we conclude that $[X, Y]$ is a class sum and hence central in $\F G$.
\end{proof}

\subsection{Group base approximation}\label{subsec:GroupBaseApprox}

We will handle the rest of the cases by a procedure that could be called a ``group base approximation''.
We will first make a few general observations on group bases satisfying the relations of the normal form given in \cref{thm:Presentations},
and then see in each single case why this leads to a contradiction.

\subsubsection{General group bases}
Now we fix integers $n$, $m$ and $\ell$ with $n \ge m \ge 2$ and $\ell \ge 2$.
Fix a group $G$ from $\DG{1}$, \dots, $\DG{6}$ generated by $x$, $y$ and $z$ which satisfy the defining relations.
Also fix another group base $H$ in $\F G$ from $\DG{1}$, \dots, $\DG{6}$ generated by $a$, $b$ and $c$ which satisfy the defining relations.
Thus, throughout this section, we assume
\begin{equation}\label{eq:Common}
    \begin{gathered}
        n \ge m \ge 2, \ \ell \ge 2, \  
        G = G_{\scriptsizetextdice{\textnormal{?}}}, \ H = G_{\scriptsizetextdice{\textnormal{?`}}}, \ 
        \textdice{\textnormal{?}}, \textdice{\textnormal{?`}} \in  \{\dice{1}, \dotsc, \dice{6}\}, \\
        G = \langle x, y, z \rangle, \ a, b, c \in 1 + \Delta(G), \ H = \langle a, b, c \rangle, \
        \F G = \F H.
    \end{gathered}
\end{equation}
As before, we write $\Delta$ for $\Delta(G)$ and use the basis $\FD_k$ of $\Delta^k$ introduced in \cref{lem:BasisDelta}.
Recall that $w = z^{2^{\ell - 1}}$, so in this basis $W = Z^{2^{\ell - 1}}$, and the weight of $W$ is denoted by $q$;
the exact value of $q$ will be irrelevant, except the fact that $q \ge 4$ as $\ell \ge 2$.

Since $n \ge m \ge 2$, the squares $x^2$ and $y^2$ are not trivial and we have
\[
    \Delta = \F X \oplus \F Y \oplus \F XY \oplus \F Z \oplus \F X^2 \oplus \F Y^2 \oplus \Delta^3.
\]

From now on square brackets denote Lie commutators in $\Delta$,
i.e. $[U, V] = UV + VU$ for $U, V \in \Delta$.
We will frequently use without comment the commutator formula
\[
    [S, T] = (1 + S + T + ST)(1 + t^{-1}s^{-1}ts)
\]
which holds for all $s, t \in G$.
In particular, $[X, Y] = (1 + X + Y + XY)Z$.
Moreover, as $z^{-1}z^x, z^{-1}z^y \in \langle z^2 \rangle$, we have  $[X, Z], [Y, Z] \in Z^2\F G \subseteq \Delta^4$.

We first consider $A$ and $B$ modulo $\Delta^3$.
Write
\begin{equation}\label{eq:AB}
    \begin{aligned}
        A &= \alpha_A X + \beta_A Y + \gamma_A XY + \delta_A Z + \xi_A X^2 + \eta_A Y^2 + U_A, \\
        B &= \alpha_B X + \beta_B Y + \gamma_B XY + \delta_B Z + \xi_B X^2 + \eta_B Y^2 + U_B
    \end{aligned}
\end{equation}
where $\alpha_A, \alpha_B, \dotsc, \eta_A, \eta_B \in \F$ and $U_A, U_B \in \Delta^3$.
Note that the scalars are uniquely determined.
This notation will be fixed throughout this section.
Observe that $A + \Delta^2$ and $B + \Delta^2$ span
the two-dimensional vector space $\Delta/\Delta^2$, as $a$ and $b$ generate $H$.
Hence they must be linearly independent, and we have
\begin{align}\label{eq:Determinant}
    \alpha_A\beta_B + \alpha_B\beta_A \ne 0.
\end{align}

We first show that $XY$ does not appear as a summand in $A$ and $B$.

\begin{lem}\label{lem:Gammas}
    We have $\gamma_A = \gamma_B = 0$.
\end{lem}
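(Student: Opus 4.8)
The plan is to exploit the structure of $\Delta/\Delta^4$ together with the fact that $A$, $B$ must satisfy the commutator relation $[A,B]=C$ where $C=1+[A,B]$-part lies deep in the filtration. More precisely, the key tool is \cref{lem:XYCommIsCentral}: for the \emph{standard} generators $[X,Y]$ is central in $\F G$, and in fact $[X,Y]=(1+X+Y+XY)Z \equiv Z \bmod \Delta^3$. First I would compute $[A,B]$ modulo $\Delta^4$ using the expansions \eqref{eq:AB}. Since $[X,Z],[Y,Z]\in\Delta^4$ and $q\ge 4$ so $[X,W],[Y,W]$ etc.\ are also deep, the only contributions to $[A,B]$ modulo $\Delta^4$ come from the degree-one parts $\alpha_A X+\beta_A Y$ and $\alpha_B X+\beta_B Y$ of $A$ and $B$, paired against the degree-one and degree-two parts. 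The leading term is $(\alpha_A\beta_B+\alpha_B\beta_A)[X,Y]\equiv(\alpha_A\beta_B+\alpha_B\beta_A)Z\bmod\Delta^3$, which is nonzero in $\Delta^2/\Delta^3$ by \eqref{eq:Determinant}. Because $Z$ is the commutator $[b,a]=c$ read in $H$, this already pins down that $c\equiv Z\bmod\Delta^3$ after rescaling; but more usefully it tells us $[A,B]$ has weight exactly $2$.

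Next I would use that $[A,B]=C\in 1+\Delta$ must be central in $\F G$ (it is $c$, a generator of $\D H$, which is cyclic, hence $\D H\le\Z(\F H)$... actually $\D H$ need not be central, but $[A,B]$ being a \emph{specific} commutator of group elements is a class sum only after we know more). Let me instead route through \cref{lem:XYCommIsCentral} more directly: in $H$ we have $[a,b]=c$ and $[c,a]=c^{-2}$, $[c,b]=c^{-2}$, so $c^2$ is central in $H$; translating, $[A,B]^2$-related elements sit in $\Delta^4$ and the relevant obstruction is that $[A,B]$ modulo $\Delta^4$ must agree with the central element $C$ modulo $\Delta^4$. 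The plan is: expand $[A,B]$ modulo $\Delta^4$ and extract the $\Delta^3/\Delta^4$ component, which is a linear combination of $\FD_3\smallsetminus\FD_4=\{X^3,X^2Y,XY^2,XZ,Y^3,YZ\}$. The $\gamma$ terms $\gamma_A XY$ and $\gamma_B XY$ enter this component through brackets like $[\alpha_A X,\gamma_B XY]$ and $[\gamma_A XY,\beta_B Y]$, producing terms in $X^2 Y$ and $XY^2$ (using $[X,XY]\equiv X\cdot Z\equiv XZ$ and similar, modulo lower — I would compute these carefully). On the other hand, $C=[A,B]$ is central, and by the $\F Z(G)\oplus(\Z(\F G)\cap[\F G,\F G])$ decomposition its weight-$3$ part lies in the span of class sums of non-central elements; comparing with the explicit basis of $\Z(\F G)$ in low weight forces the $X^2Y$- and $XY^2$-coefficients to vanish, which gives two linear equations in $\gamma_A,\gamma_B$ (with coefficients built from the $\alpha$'s and $\beta$'s) whose only solution, given \eqref{eq:Determinant}, is $\gamma_A=\gamma_B=0$.

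The main obstacle I anticipate is bookkeeping in the weight-$3$ layer: one must be careful that $[X,Y]=(1+X+Y+XY)Z$ is not simply $Z$ but has a genuine $XZ$, $YZ$, $XYZ$ tail, so the bracket $[A,B]$ contributes to $\Delta^3/\Delta^4$ both from this tail (via the leading coefficient $\alpha_A\beta_B+\alpha_B\beta_A$) and from the cross terms involving $\gamma$, $\delta$, $\xi$, $\eta$. Disentangling which of $X^3,X^2Y,XY^2,XZ,Y^3,YZ$ receive $\gamma$-contributions — and checking that at least one coefficient isolates $\gamma_A$ (resp.\ $\gamma_B$) up to the invertible scalar $\alpha_A\beta_B+\alpha_B\beta_A$ — is the delicate computation. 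I would organize it by writing $A=A_1+A_2+U_A$ and $B=B_1+B_2+U_B$ with $A_1,B_1$ the linear parts and $A_2,B_2$ the weight-$2$ parts, noting $[A,B]\equiv[A_1,B_1]+[A_1,B_2]+[A_2,B_1]\bmod\Delta^4$, and then reading off the $XY^2$ and $X^2Y$ components, where I expect the answer to collapse to $\gamma_B(\alpha_A\beta_B+\alpha_B\beta_A)=0$ and $\gamma_A(\alpha_A\beta_B+\alpha_B\beta_A)=0$ after using the centrality constraint on $C$ to kill the competing $XZ$, $YZ$ contributions.
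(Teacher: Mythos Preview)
Your proposal has a real gap, though the underlying instinct (exploit centrality together with \cref{lem:XYCommIsCentral}) is sound.

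First, two confusions to clear up. The Lie commutator $[A,B]=AB+BA=ab+ba$ is \emph{not} the same as $C=c+1$ (which comes from the group commutator $c=b^{-1}a^{-1}ba$); they agree only modulo higher terms. What \emph{is} true, by applying \cref{lem:XYCommIsCentral} to the group base $H$, is that $[A,B]=ab+ba$ is a class sum in $H$ and hence central in $\F H=\F G$. Second, your prediction that the $\gamma$'s surface as $X^2Y$- and $XY^2$-coefficients is wrong: since $[X,XY]\equiv XZ$ and $[Y,XY]\equiv YZ$ modulo $\Delta^4$, and all other relevant brackets land in $\Delta^4$, one gets
\[
[A,B]\equiv \lambda Z+(\lambda+\alpha_A\gamma_B+\alpha_B\gamma_A)XZ+(\lambda+\beta_A\gamma_B+\beta_B\gamma_A)YZ \mod \Delta^4,
\]
with no $X^2Y$ or $XY^2$ at all.

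Now the fatal point: comparing with the center at the $\Delta^3/\Delta^4$ level yields \emph{nothing}. The class sums of $\{xy,xyz\}$, $\{x,xz^{-1}\}$, $\{y,yz\}$ are, modulo $\Delta^4$, equal to $Z+XZ+YZ$, $Z+XZ$, $Z+YZ$ respectively, and these span all of $\F Z+\F XZ+\F YZ$. So every element of that shape is congruent to a central element modulo $\Delta^4$, and the centrality of $[A,B]$ imposes no constraint on its weight-$3$ coefficients. Your proposed extraction of $\gamma_A,\gamma_B$ at this layer therefore fails.

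The paper's proof avoids this by using a different central element: $A^2$ is central because $a^2\in\Z(H)$. One computes $A^2$ modulo the subspace $\Xi=\{R\in\Delta^2:[R,\Delta]\subseteq\Delta^5\}$, obtaining $A^2\equiv\alpha_A\gamma_A\,XZ+\beta_A\gamma_A\,YZ\bmod\Xi$, and then brackets with $X$ and $Y$ modulo $\Delta^5$ to get $\beta_A\gamma_A Z^2$ and $\alpha_A\gamma_A Z^2$. Centrality forces both to vanish, whence $\gamma_A=0$; the same for $B$. The crucial move is passing to $\Delta^5$ via the bracket, not reading off coefficients at $\Delta^3/\Delta^4$. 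Your idea can in fact be salvaged the same way: computing $[[A,B],X]$ and $[[A,B],Y]$ modulo $\Delta^5$ gives $(\beta_A\gamma_B+\beta_B\gamma_A)Z^2$ and $(\alpha_A\gamma_B+\alpha_B\gamma_A)Z^2$, and the resulting $2\times 2$ system together with \eqref{eq:Determinant} forces $\gamma_A=\gamma_B=0$. But this is not the argument you wrote down, and the paper's route via $A^2$ is cleaner since it decouples $A$ from $B$.
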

\begin{proof}
    The proof is identical for $A$ and $B$, so we only show $\gamma_A = 0$.
    The idea is to use the fact that $A^2$ is central in $\F G$, as $a^2$ is central in $H$.

    Let $\Xi$ be the linear subspace of $\Delta$ defined by
    \[
        \Xi = \set{ R \in \Delta^2 \given \text{$[R, S] \in \Delta^5$ for all $S \in \Delta$} }.
    \]
    Note that $\Delta^4 \subseteq \Xi$ and $[X, Y] \in \Xi$ by \cref{lem:XYCommIsCentral}.
    We will consider $A^2$ modulo $\Xi$.
    We will use that
    \begin{align*}
        [X, Z], \ & [Y, Z] \in Z^2\F G \subseteq \Delta^4 \subseteq \Xi, \\
        [X, XY] &= X[X, Y] \equiv XZ \mod \Xi, \\
        [Y, XY] &= [X, Y]Y \equiv YZ \mod \Xi.
    \end{align*}

    From \eqref{eq:AB}, we get
    \begin{equation}\label{eq:A2Modulo}
        \begin{aligned}
            A^2 &\equiv \alpha_A\beta_A [X, Y] + \alpha_A\gamma_A [X, XY] + \beta_A\gamma_A[Y, XY] \\
                &\equiv \alpha_A\gamma_A X[X, Y] + \beta_A\gamma_A[Y, X]Y \\
                &\equiv \alpha_A \gamma_A XZ + \beta_A \gamma_A YZ \mod \Xi
        \end{aligned}
    \end{equation}
    where $[X, Y] \in \Xi$ is used to go from the first to the second line.
    Computing commutators modulo $\Delta^5$ gives
    \begin{align*}
        [XZ, Y] &\equiv [YZ, X] \equiv Z^2 \mod \Delta^5, \\
        [XZ, X] &\equiv [YZ, Y] \equiv 0   \mod \Delta^5.
    \end{align*}
    So modulo $\Delta^5$ computing commutators with $A^2$ using \eqref{eq:A2Modulo} and the definition of $\Xi$ we obtain
    \begin{align*}
        [A^2, X] &\equiv  \beta_A\gamma_A Z^2 \mod \Delta^5, \\
        [A^2, Y] &\equiv \alpha_A\gamma_A Z^2 \mod \Delta^5.
    \end{align*}
    As $A^2$ is central in $\F G$, we conclude that $\alpha_A \gamma_A = \beta_A \gamma_A = 0$.
    But as $\alpha_A \ne 0$ or $\beta_A \ne 0$ by $A \not\in \Delta^2$, this implies $\gamma_A = 0$.
\end{proof}

The next lemma allows us to make conclusions about scalars from a relation between $A^{2^n}$ and $B^{2^n}$.

\begin{lem}\label{lem:A2NB2N}
    We have
    \begin{align*}
        A^{2^n} &\equiv \alpha_A^{2^n} X^{2^n} + \beta_A^{2^n} Y^{2^n} \mod \Z(\F G) \cap [\F G, \F G], \\
        B^{2^n} &\equiv \alpha_B^{2^n} X^{2^n} + \beta_B^{2^n} Y^{2^n} \mod \Z(\F G) \cap [\F G, \F G].
    \end{align*}
\end{lem}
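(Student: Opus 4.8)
The plan is to reduce the statement to a congruence modulo the whole Lie commutator space $[\F G, \F G]$. First, all three elements involved are central: $X^{2^n} = x^{2^n} + 1$ and $Y^{2^n} = y^{2^n} + 1$ lie in $\F\Z(G)$ because $x^{2^n}, y^{2^n} \in \Z(G)$ (their squares are central by the relations), and $A^{2^n} = a^{2^n} + 1$ is central because the relations of $H$ force $a^{2^n} \in \{1, c^{2^{\ell - 1}}\} \subseteq \Z(H)$ while $H$ is a group base. Hence, if I show $A^{2^n} - \alpha_A^{2^n} X^{2^n} - \beta_A^{2^n} Y^{2^n} \in [\F G, \F G]$, this difference is simultaneously central and a sum of Lie commutators, hence lies in $\Z(\F G) \cap [\F G, \F G]$, which is the assertion; the argument for $B$ is verbatim the same.

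To compute $A^{2^n}$ modulo commutators I would work in $\F G / [\F G, \F G]$, which has the conjugacy classes of $G$ as an $\F$-basis and on which the $2^n$-power map is well defined, additive and $2^n$-semilinear (a standard fact: if $g \sim h$ then $(g - h)^{2^k} \equiv g^{2^k} - h^{2^k} \bmod [\F G, \F G]$ and $g^{2^k} \sim h^{2^k}$). By \cref{lem:Gammas} we may write $A = \alpha_A X + \beta_A Y + \delta_A Z + \xi_A X^2 + \eta_A Y^2 + U_A$ with $U_A \in \Delta^3$, and then, by additivity,
\[
    \overline{A^{2^n}} = \alpha_A^{2^n}\overline{X^{2^n}} + \beta_A^{2^n}\overline{Y^{2^n}} + \delta_A^{2^n}\overline{Z^{2^n}} + \xi_A^{2^n}\overline{X^{2^{n+1}}} + \eta_A^{2^n}\overline{Y^{2^{n+1}}} + \overline{U_A^{2^n}}
\]
in $\F G / [\F G, \F G]$. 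From the relations one checks $x^{2^{n+1}} = y^{2^{n+1}} = 1$ (since $x^{2^n} \in \{1, z^{2^{\ell - 1}}\}$ has order at most $2$), so the $X^{2^{n+1}}$ and $Y^{2^{n+1}}$ terms are zero; and $\overline{Z^{2^n}} = \overline{z^{2^n} + 1} = 0$ whenever $z^{2^n} = 1$, i.e.\ whenever $n \ge \ell$. It then remains to show $\overline{U_A^{2^n}} = 0$: writing $U_A = \sum_{g \in G} c_g\, g$ one has $\sum_g c_g = 0$ as $U_A \in \Delta$, so $\overline{U_A^{2^n}} = \sum_g c_g^{2^n}\,\overline{g^{2^n}}$, which collapses to $\bigl(\sum_g c_g\bigr)^{2^n}\overline{1} = 0$ as soon as every $g \in G$ has $g^{2^n} = 1$ — that is, as soon as $G$ has exponent $2^n$, which holds for $\DG{1}, \dots, \DG{4}$; for $\DG{5}, \DG{6}$, where $x$ has order $2^{n+1}$, a refinement tracking the elements of order $2^{n+1}$ (whose $2^n$-th power is the central involution $z^{2^{\ell-1}}$) is needed.

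Granting these vanishings, $\overline{A^{2^n}} = \alpha_A^{2^n}\overline{X^{2^n}} + \beta_A^{2^n}\overline{Y^{2^n}}$ for $n \ge \ell$, which covers every pair surviving \cref{prop:RemainingCases} apart from $\DG{5}, \DG{6}$ with $n = m < \ell$. I expect the two remaining points to be where the real work lies: (i) making the ``$\overline{U_A^{2^n}} = 0$'' step rigorous for $\DG{5}, \DG{6}$, i.e.\ showing the surviving coefficient on $\overline{z^{2^{\ell-1}}}$ vanishes, which needs the precise shape of a general element of $\Delta^3$ and is not purely formal; and (ii) the sub-case $n = m < \ell$, where $z^{2^n}$ is a non-trivial power of $z$ and the $\delta_A^{2^n}\overline{Z^{2^n}}$ term survives modulo commutators — here one has to exploit additional structure, such as the constraint that $A^{2^n}$ must be central, or \cref{prop:AgemoCenter} applied to $A^{2^n} = (A^2)^{2^{n-1}} \in \Agemo_{n-1}(\Z(\F G))$, to control what $\delta_A$ can contribute.
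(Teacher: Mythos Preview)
Your reduction --- show the congruence modulo $[\F G, \F G]$ and then use centrality to land in $\Z(\F G) \cap [\F G, \F G]$ --- is sound, but the execution has a gap more serious than the two you flag. The claim that $\DG{1}, \dots, \DG{4}$ have exponent $2^n$ is simply false whenever $n \le \ell$: in $\DG{1}$ one has $(xy)^2 = x^2y^2z^{-1}$, hence $(xy)^{2^n} = z^{-2^{n-1}} \ne 1$. So your collapse of $\overline{U_A^{2^n}}$ to zero, which uses only $U_A \in \Delta$ and discards the $\Delta^3$ information, fails across the board, not just for $\DG{5}$ and $\DG{6}$. The surviving $\delta_A^{2^n}\overline{Z^{2^n}}$ term is likewise uncontrolled for every $n < \ell$, and your proposed fix via \cref{prop:AgemoCenter} cannot apply there since that proposition requires $r \ge \ell$ while you would need $r = n - 1$. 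Crucially, the lemma is stated under the general hypotheses \eqref{eq:Common} and is \emph{applied} in \cref{lem:NeqMltLG5vsG6,lem:NeqMeqLG5vsG6} precisely for $G \cong \DG{5}$, $H \cong \DG{6}$ with $n = m \le \ell$ --- the regime where every piece of your argument is incomplete.

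The paper takes a different and cleaner route: it exploits that already $A^2$ is central, since $a^2 \in \Z(H)$. One decomposes $A^2 = \alpha_A^2 X^2 + \beta_A^2 Y^2 + R_A + S_A$ along $\Z(\F G) = \F\Z(G) \oplus (\Z(\F G) \cap [\F G, \F G])$, and by comparing with $A^2 \equiv \alpha_A^2 X^2 + \beta_A^2 Y^2 + \alpha_A\beta_A Z \bmod \Delta^3$ together with the observation that non-central class sums lie in $\F Z + \Delta^3$, one forces $R_A \in \F\Z(G) \cap \Delta^4$. Since $\F\Z(G)$ is spanned by monomials $X^{2r}Y^{2s}W^u$, a direct check using $X^{2^{n+1}} = Y^{2^{n+1}} = X^{2^n}Y^{2^n} = W^2 = 0$ gives $R_A^{2^{n-1}} = 0$ for \emph{every} admissible $(n,m,\ell)$. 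Raising the decomposition to the $2^{n-1}$-th power then yields the lemma uniformly. The idea you are missing is to pass to the central element $A^2$ \emph{before} taking high powers, so that the error term lives in the small, explicitly controllable space $\F\Z(G) \cap \Delta^4$ rather than in all of $\Delta^3$.
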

\begin{proof}
    We first observe that $\Z(G)$ is generated by $x^2$, $y^2$ and $w$.
    Thus
    \[
        \F\Z(G) \cap \Delta
        = \sum_{\substack{r, s, t\ge 0\\ 2r + 2s + qt \ge 1}} \F X^{2r}Y^{2s}W^t \subseteq \F X^2 + \F Y^2 + \Delta^4.
    \]

    We will prove the claims only for $A$ as the arguments for $B$ are identical.
    Recall that \eqref{eq:AB} and \cref{lem:Gammas} imply
    \[
        A = \alpha_A X + \beta_A Y + \delta_A Z + \xi_A X^2 + \eta_A Y^2 + U_A.
    \]
    So, by $[X, Y] \equiv Z \mod \Delta^3$, it yields
    \begin{equation}\label{eq:A2Delta3}
        A^2 \equiv \alpha_A^2 X^2 + \beta_A^2 Y^2 + \alpha_A \beta_A Z \mod \Delta^3.
    \end{equation}
    Using the decomposition $\Z(\F G) = \F\Z(G) \oplus (\Z(\F G) \cap [\F G, \F G])$,
    we get
    \begin{equation}\label{eq:A2RS}
        A^2 = \alpha_A^2 X^2 + \beta_A^2 Y^2 + R_A + S_A
    \end{equation}
    for some $R_A \in \F\Z(G) \cap \Delta$ and $S_A \in \Z(\F G) \cap [\F G, \F G]$.

    Now $S_A$ can be written as a linear combination of class sums of non-central elements.
    It follows from $\D{G} = \langle z \rangle$ that
    \[
        S_A \in \F Z + \Delta^3.
    \]
    Indeed, let $g = x^iy^jz^k$ be a representative of a non-central conjugacy class of $G$ with $i, j, k \ge 0$.
    Since the conjugacy class of $g$ can be written as
    \[
        \{ gz^{e_1}, \dotsc, gz^{e_t} \}
    \]
    for some non-trivial power $t$ of 2 and some $e_1, \dotsc, e_t \ge 0$,
    the class sum of the conjugacy class can be written as follows.
    \begin{align*}
        gz^{e_1} + \dotsb + gz^{e_t}
        &= x^i y^j z^k(z^{e_1} + \dotsb + z^{e_t}) \\
        &= (1 + X)^i (1 + Y)^j (1 + Z)^k ((1 + Z)^{e_1} + \dotsb + (1 + Z)^{e_t}) \\
        &\equiv (1 + X)^i (1 + Y)^j (1 + Z)^k (e_1 + \dotsb + e_t) Z \\
        &\equiv (e_1 + \dotsb + e_t)Z \mod \Delta^3.
    \end{align*}
    Hence $S_A$ belongs to the desired vector space.
    
    Since $X^2 + \Delta^3$, $Y^2 + \Delta^3$, $Z + \Delta^3$ are linearly independent in $\Delta^2/\Delta^3$,
    it follows, reformulating \eqref{eq:A2Delta3} and \eqref{eq:A2RS} as
    \[
        R_A + S_A \equiv \alpha_A \beta_A Z \mod \Delta^3,
    \]
    that $R_A \in \Delta^4$.
    Since
    $X^{2^{n + 1}} = 0$,
    $X^{2^n} Y^{2^n} = 0$,
    $Y^{2^{n + 1}} = 0$ and
    $W^2 = 0$
    by the relations of $G$,
    we get $R_A^{2^{n - 1}} = 0$.
    Raising \eqref{eq:A2RS} to a power hence yields
    \[
        A^{2^n} = \alpha_A^{2^n} X^{2^n} + \beta_A^{2^n} Y^{2^n} + S_A^{2^{n - 1}},
    \]
    and the claim follows.
\end{proof}

The element $C = 1 + c$ will play an important role and we first approximate it modulo $\Delta^4$.

\begin{lem}\label{lem:CModulo}
    The following congruence holds\textup{:}
    \[
        C \equiv \lambda Z + \mu XZ + \nu YZ \mod \Delta^4,
    \]
    where
    $\lambda = \alpha_A\beta_B + \alpha_B \beta_A$,
    $\mu = \lambda(1 + \alpha_A + \alpha_B)$ and
    $\nu = \lambda(1 + \beta_A + \beta_B)$.
\end{lem}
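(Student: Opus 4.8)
The plan is to reduce the computation of $C=1+c$ to that of the Lie commutator $[A,B]=AB+BA$. Since every group in \cref{thm:Presentations} satisfies $c=[b,a]$, a direct computation in $\F G=\F H$ gives $[A,B]=AB+BA=ab+ba=ab(1+c)=ab\cdot C$, hence $C=(ab)^{-1}[A,B]$. Now $[A,B]\in\Delta^2$, and expanding $(ab)^{-1}=\bigl(1+(A+B+AB)\bigr)^{-1}$ as a geometric series shows $(ab)^{-1}=1+V'$ with $V'\in\Delta$ and $V'\equiv A+B\bmod\Delta^2$. Therefore $C\equiv[A,B]+(A+B)[A,B]\bmod\Delta^4$, because $V'+A+B\in\Delta^2$ forces $(V'+A+B)[A,B]\in\Delta^2\cdot\Delta^2=\Delta^4$.

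Next I would pin down $[A,B]$ modulo $\Delta^4$. By \cref{lem:Gammas} the $XY$-summands in \eqref{eq:AB} vanish, so I may expand $[A,B]$ bilinearly from the remaining terms. Every bracket involving $X^2$ or $Y^2$ is zero since these are central in $\F G$, every bracket involving $U_A$ or $U_B$ lies in $\Delta\cdot\Delta^3=\Delta^4$, and $[X,Z],[Y,Z]\in Z^2\F G\subseteq\Delta^4$ kill the contributions of the $Z$-summands; hence the only part of weight below $4$ comes from pairing the weight-one parts $\alpha_\bullet X+\beta_\bullet Y$, giving $[A,B]\equiv(\alpha_A\beta_B+\alpha_B\beta_A)[X,Y]=\lambda[X,Y]\bmod\Delta^4$. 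Since $[X,Y]=(1+X+Y+XY)Z=Z+XZ+YZ+XYZ$ and $XYZ\in\Delta^4$, this yields $[A,B]\equiv\lambda(Z+XZ+YZ)\bmod\Delta^4$.

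Finally I would multiply out $(1+A+B)\lambda(Z+XZ+YZ)$ modulo $\Delta^4$. The products of $A+B$ with the weight-three terms $XZ$ and $YZ$ already lie in $\Delta^4$, and writing $A+B\equiv(\alpha_A+\alpha_B)X+(\beta_A+\beta_B)Y\bmod\Delta^2$ and using $\Delta^2\cdot Z\subseteq\Delta^4$, the only surviving contribution is $\lambda Z+\lambda XZ+\lambda YZ+\lambda(\alpha_A+\alpha_B)XZ+\lambda(\beta_A+\beta_B)YZ$. Collecting coefficients gives exactly $\lambda=\alpha_A\beta_B+\alpha_B\beta_A$, $\mu=\lambda(1+\alpha_A+\alpha_B)$ and $\nu=\lambda(1+\beta_A+\beta_B)$, which is the assertion.

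The difficulty here is organizational rather than conceptual: one must keep careful track of which products fall into $\Delta^4$. The structural facts that make this short are precisely \cref{lem:Gammas} (no $XY$-summand in $A$ or $B$), the centrality of $X^2$ and $Y^2$, and $[X,Z],[Y,Z]\in\Delta^4$; granted these, everything reduces to disciplined weight bookkeeping.
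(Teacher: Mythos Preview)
Your proof is correct and follows essentially the same approach as the paper: both arrive at the key intermediate formula $C \equiv (1+A+B)[A,B] \bmod \Delta^4$ and then evaluate $[A,B]$ using \cref{lem:Gammas}, centrality of $X^2,Y^2$, and $[X,Z],[Y,Z]\in\Delta^4$. Your derivation of that intermediate formula via $ba=abc$, hence $C=(ab)^{-1}[A,B]$, is a clean shortcut compared to the paper's direct expansion of $1+(1+B)^{-1}(1+A)^{-1}(1+B)(1+A)$, but the substance is the same.
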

\begin{proof}
    From the definitions, we have
    \[
        C = 1 + c = 1 + b^{-1}a^{-1}ba = 1 + (1 + B)^{-1}(1 + A)^{-1}(1 + B)(1 + A).
    \]
    For $R \in \Delta$, one has $(1 + R)^{-1} = 1 + R + R^2 + \dotsb + R^k$ for some $k$ such that $R^k = 0$.
    Hence
    \begin{align*}
        C
        &\equiv 1 + (1 + B + B^2 + B^3)(1 + A + A^2 + A^3)(1 + B)(1 + A) \\
        &\equiv  [A, B] + A[A, B] + B[A, B] \\
        &= (1 + A + B)[A, B] \mod \Delta^4
    \end{align*}
    by an explicit multiplication, with cancellation afterwards as we may in characteristic~$2$.
    Then, by $[X, Z], [Y, Z] \in \Delta^4$, $U_A, U_B \in \Delta^3$ and $\gamma_A = \gamma_B = 0$ (i.e. \cref{lem:Gammas}), from \eqref{eq:AB} we obtain
    \[
        C \equiv (1 + \alpha_AX + \beta_AY + \alpha_BX + \beta_BY)[\alpha_AX + \beta_AY, \alpha_BX + \beta_BY] \mod \Delta^4.
    \]
    Since $[X, Y] = (1 + X + Y + XY)Z$, straightforward calculations yield that
    \begin{align*}
        C
        &\equiv (1 + (\alpha_A + \alpha_B)X + (\beta_A + \beta_B)Y)(\alpha_A \beta_B + \alpha_B \beta_A)(Z + XZ + YZ) \\
        &\equiv (\alpha_A\beta_B + \alpha_B \beta_A)(Z + (1 + \alpha_A + \alpha_B)XZ + (1 + \beta_A + \beta_B)YZ) \mod \Delta^4.
        \qedhere
    \end{align*}
\end{proof}

We now define a variation of a power map that will turn out to be very useful to us.
First set $\Gamma = \Delta(\D{G})\F G = [\F G, \F G]\F G$.
Note that $\Gamma = Z\F G$, and thus the subset
\begin{equation}\label{eq:BasisGamma}
    \FC = \left\{\, X^r Y^s Z^t W^u \mathrel{}\middle|\mathrel{}
        \begin{gathered}
            0 \le r \le 2^n - 1,\ 0 \le s \le 2^m - 1,\\ 0 \le t \le 2^{\ell - 1} - 1,\ 0 \le u \le 1,\\ 2t + qu \ge 1
        \end{gathered}
    \,\right\}
\end{equation}
of the basis of $\F G$ defined in \cref{lem:BasisDelta} is a basis of $\Gamma$.

Recall that $q$ denotes the weight of $W$.
Set $d = 1 + 2^{\ell - 1} + q$, and define the $2^{\ell - 1}$-power map
\begin{equation}
    \psi\colon \Gamma/\Gamma^2 \to \Gamma^{2^{\ell - 1}}/\left(\Gamma^{1 + 2^{\ell - 1}} + \Delta^d\right).
\end{equation}
As will become clear, the reason for the definition of $d$ is that
$W$, $X^{2^{\ell - 1}}W$ and $Y^{2^{\ell - 1}}W$ are non-zero modulo $\Gamma^{1 + 2^{\ell - 1}} + \Delta^d$,
while other elements in $\FC$ that span the image of $\psi$ are zero.

\begin{lem}\label{lem:Psi}
    For $T \in \Delta^2$ and $\lambda, \mu, \nu \in \F$, we have
    \begin{align*}
        &\psi\left(\lambda Z + \mu XZ + \nu YZ + TZ + \Gamma^2\right)	 \\
        &\qquad= \lambda^{2^{\ell - 1}}W + \mu^{2^{\ell - 1}}X^{2^{\ell - 1}}W + \nu^{2^{\ell - 1}}Y^{2^{\ell - 1}}W
        + \left(\Gamma^{1 + 2^{\ell - 1}} + \Delta^d\right).
    \end{align*}
\end{lem}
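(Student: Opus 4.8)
The plan is to reduce the whole statement to the single congruence
\[
    (SZ)^k \equiv S^k Z^k \pmod{\Gamma^{k + 1}} \qquad (S \in \F G,\ k \ge 1),
\]
and then to compute $S^{2^{\ell - 1}}W$ for $S = \lambda + \mu X + \nu Y + T$, discarding error terms by a weight count. Indeed, writing the given representative as $R = SZ$ with $S = \lambda + \mu X + \nu Y + T$, once the displayed congruence is known we get $R^{2^{\ell - 1}} \equiv S^{2^{\ell - 1}}W \pmod{\Gamma^{1 + 2^{\ell - 1}}}$, since $Z^{2^{\ell - 1}} = W$.

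Two preliminary facts are needed. First, $w = z^{2^{\ell - 1}}$ is central in $G$ by \cref{lem:Center}, so $W$ is central in $\F G$; moreover $\Gamma^k = Z^k \F G$ for all $k$, which follows from $\F G Z \subseteq Z\F G$, a consequence of $z^g \in \{z, z^{-1}\}$ for all $g \in G$ (as $x$ and $y$ invert $z$). Second, $[Z, g] = g(z^g + z)$ is either $0$ or equal to $gz^{-1}Z^2$, hence lies in $\F G Z^2 \subseteq Z^2\F G = \Gamma^2$; by linearity $[Z, S'] \in \Gamma^2$ for every $S' \in \F G$, and then the Leibniz identity $[Z^j, S'] = Z[Z^{j - 1}, S'] + [Z, S']Z^{j - 1}$ gives $[Z^j, S'] \in \Gamma^{j + 1}$ for all $j \ge 1$ by induction. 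Granting this, the displayed congruence follows by induction on $k$: if $(SZ)^{k - 1} = S^{k - 1}Z^{k - 1} + F$ with $F \in \Gamma^k$, then $(SZ)^k = S^{k - 1}(Z^{k - 1}S)Z + FSZ = S^k Z^k + S^{k - 1}[Z^{k - 1}, S]Z + FSZ$, and the last two summands lie in $\Gamma^{k + 1}$.

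Next I would evaluate $S^{2^{\ell - 1}}$. Expanding, $S^2 = C + T''$, where $C = \lambda^2 + \mu^2 X^2 + \nu^2 Y^2 + \mu\nu[X, Y]$ is central (as $X^2$, $Y^2$ are central and $[X, Y]$ is central by \cref{lem:XYCommIsCentral}) and $T'' = T^2 + \mu[X, T] + \nu[Y, T] \in \Delta^3$. Because $C$ is central, $(C + T'')^2 = C^2 + T''^2$ in characteristic~$2$, so iterating gives $S^{2^{\ell - 1}} = C^{2^{\ell - 2}} + (T'')^{2^{\ell - 2}}$. Here $(T'')^{2^{\ell - 2}} W \in \Delta^{3 \cdot 2^{\ell - 2} + q} \subseteq \Delta^d$ because $3 \cdot 2^{\ell - 2} \ge 1 + 2^{\ell - 1}$ when $\ell \ge 2$; and since the summands of $C$ commute, $C^{2^{\ell - 2}} = \lambda^{2^{\ell - 1}} + \mu^{2^{\ell - 1}}X^{2^{\ell - 1}} + \nu^{2^{\ell - 1}}Y^{2^{\ell - 1}} + (\mu\nu)^{2^{\ell - 2}}[X, Y]^{2^{\ell - 2}}$, where the last summand multiplied by $W$ lies in $\Gamma^{2^{\ell - 2}}\Gamma^{2^{\ell - 1}} = \Gamma^{3 \cdot 2^{\ell - 2}} \subseteq \Gamma^{1 + 2^{\ell - 1}}$ (using $[X, Y] \in \Gamma$ and the same inequality). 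Multiplying $S^{2^{\ell - 1}}$ by $W$ and reducing modulo $\Gamma^{1 + 2^{\ell - 1}} + \Delta^d$ thus leaves exactly $\lambda^{2^{\ell - 1}}W + \mu^{2^{\ell - 1}}X^{2^{\ell - 1}}W + \nu^{2^{\ell - 1}}Y^{2^{\ell - 1}}W$, which is the claim; as a by-product, the answer does not involve $T$, which is precisely the well-definedness of $\psi$.

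The main obstacle is the interplay of the two filtrations: the $\Gamma$-adic one (powers of $Z$) controls the error terms coming from the noncommutativity of $Z$ with the rest, while the $\Delta$-adic one, cut off at $d = 1 + 2^{\ell - 1} + q$, controls the perturbation $T''$. One has to check that $d$ is calibrated so that both the $[X, Y]^{2^{\ell - 2}}W$ term and the $(T'')^{2^{\ell - 2}}W$ term are discarded; the only arithmetic input needed for this is $\ell \ge 2$.
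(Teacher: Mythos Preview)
Your argument is correct. Both proofs rest on the same commutator estimate $[Z^j,S]\in\Gamma^{j+1}$ (equivalently, the paper's Claim~1 that $[g,Z^{2^i}]\in\Gamma^{2^{i+1}}$), but the organization differs. The paper factors $\psi$ as a chain of squaring maps $\psi_{\ell-2}\circ\dotsb\circ\psi_0$ and shows at each step that $(RZ^{2^i}+SZ^{2^i})^2\equiv R^2Z^{2^{i+1}}+S^2Z^{2^{i+1}}\pmod{\Gamma^{1+2^{i+1}}}$; the cross terms disappear because $[R,S]\in\Gamma$, and after $\ell-1$ iterations one is left with $T^{2^{\ell-1}}W\in\Delta^d$. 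You instead prove the single congruence $(SZ)^k\equiv S^kZ^k\pmod{\Gamma^{k+1}}$ in one stroke, and then compute $S^{2^{\ell-1}}$ by observing that $S^2=C+T''$ with $C$ \emph{central} (using \cref{lem:XYCommIsCentral}) and $T''\in\Delta^3$, so that $S^{2^{\ell-1}}=C^{2^{\ell-2}}+(T'')^{2^{\ell-2}}$. This separates the two error sources cleanly: noncommutativity of $Z$ is absorbed into $\Gamma^{1+2^{\ell-1}}$, while the perturbation $T''$ and the $[X,Y]$-term are absorbed into $\Delta^d$ and $\Gamma^{1+2^{\ell-1}}$ respectively by the same inequality $3\cdot 2^{\ell-2}\ge 1+2^{\ell-1}$. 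The paper's route is more uniform and needs only $[R,S]\in\Gamma$; yours is shorter once one invokes the centrality of $[X,Y]$, and it makes transparent why $d$ is exactly $1+2^{\ell-1}+q$.
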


\begin{proof}
    To prove the lemma, we factor $\psi$ as a composition of maps.
    Define the square-map
    \[
        \psi_i \colon \Gamma^{2^i}/\Gamma^{1 + 2^i} \to \Gamma^{2^{i + 1}}/\Gamma^{1 + 2^{i + 1}}
    \]
    for $i \ge 0$ and the natural projection
    \[
        \pi\colon \Gamma^{2^{\ell - 1}}/\Gamma^{1 + 2^{\ell - 1}} \to \Gamma^{2^{\ell - 1}}/\left(\Gamma^{1 + 2^{\ell - 1}} + \Delta^d\right).
    \]
    Then $\psi = \pi \circ \psi_{\ell - 2} \circ \dotsb \circ \psi_1 \circ \psi_0$.
    Note that this expression is well defined as $\ell \ge 2$.

    \textbf{Claim 1:} For $g \in G$ and $i \ge 0$, one has $[g, Z^{2^i}] \in \Gamma^{2^{i + 1}}$.

    It suffices to show that $1 + g^{-1} Z^{2^i} g \equiv 1 + Z^{2^i} \mod \Gamma^{2^{i + 1}}$.
    From the general formula $(1 + R)^{-1} = 1 + R + R^2 + \dotsb$ for $R \in \Delta$, we get
    \[
        1 + Z^{-2^i} = (1 + Z^{2^i})^{-1} = 1 + Z^{2^i} + (Z^{2^i})^2 + \dotsb \equiv 1 + Z^{2^i} \mod \Gamma^{2^{i + 1}}.
    \]
    Note that $\{ z^{2^i}, z^{-2^i} \}$ is a conjugacy class of $G$.
    Thus $1 + g^{-1} Z^{2^i} g$ is equal to $1 + Z^{2^i}$ or $1 + Z^{-2^i}$ and the claim holds.

    \textbf{Claim 2:} For $R, S \in \F G$, one has
    \[
        \psi_i\left(RZ^{2^i} + SZ^{2^i} + \Gamma^{1 + 2^i}\right) = R^2 Z^{2^{i + 1}} + S^2 Z^{2^{i + 1}} + \Gamma^{1 + 2^{i + 1}}.
    \]

    From the previous claim we know $[R, Z^{2^i}], [S, Z^{2^i}] \in \Gamma^{2^{i + 1}}$, so
    \begin{align*}
        \psi_i &\left(RZ^{2^i} + SZ^{2^i} + \Gamma^{1 + 2^i}\right) \\
        &= (RZ^{2^i})^2 + (SZ^{2^i})^2 + [RZ^{2^i}, SZ^{2^i}] + \Gamma^{1 + 2^{i + 1}} \\
        &= R^2 Z^{2^{i + 1}} + S^2 Z^{2^{i + 1}} + [R, S]Z^{2^{i + 1}} + \Gamma^{1 + 2^{i + 1}}.
    \end{align*}
    As $[R, S] \in \Gamma$, the claim follows.

    Now applying the last claim stepwise to $\psi_0$, $\psi_1$, \dots, $\psi_{\ell - 2}$ gives
    \begin{align*}
        &(\psi_{\ell - 2} \circ \dotsb \circ \psi_1 \circ \psi_0) (\lambda Z + \mu XZ + \nu YZ + TZ + \Gamma^2) \\
        &\qquad= \lambda^{2^{\ell - 1}} W + \mu^{2^{\ell - 1}} X^{2^{\ell - 1}}W + \nu^{2^{\ell - 1}} Y^{2^{\ell - 1}}W
        + T^{2^{\ell - 1}} W + \Gamma^{1 + 2^{\ell - 1}}.
    \end{align*}
    As $T^{2^{\ell - 1}} \in \Delta^{2^\ell}$, this means $T^{2^{\ell - 1}} W \in \Delta^d$.
    Thus applying $\pi$ eliminates the last term, and the lemma follows.
\end{proof}

\subsubsection{Specific cases}
Next we consider the coefficients $\alpha_A$ and $\alpha_B$.
As the calculations are no longer uniform for all groups, we introduce case distinctions.

\begin{lem}\label{lem:Alphas}
    If $G \cong \DG{3}$, $H \cong \DG{4}$ and $n > m > \ell$, then $\alpha_A = \alpha_B$.

    If $G \cong \DG{5}$ and $H \cong \DG{6}$, then we have the following.
    \begin{enumerate}
        \item $m > \ell$ implies $\alpha_A = \alpha_B$.
        \item $n = m = \ell$ implies $\alpha_A(\alpha_A + \beta_A) = \alpha_B(\alpha_B + \beta_B)$.
        \item $n = m < \ell$ implies $\alpha_A \beta_A = \alpha_B \beta_B = 0$.
    \end{enumerate}
\end{lem}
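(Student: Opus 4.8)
The plan is to read off the defining power relations of $H$ as identities in $\F G = \F H$ among the units $A = 1 + a$, $B = 1 + b$, $C = 1 + c$. Concretely, $b^{2^m} = a^{2^m}c^{2^{\ell-1}}$ becomes $B^{2^m} + A^{2^m} = a^{2^m}C^{2^{\ell-1}}$ for $H \cong \DG{4}$, while $b^{2^m} = a^{2^m}$ and $a^{2^n} = c^{2^{\ell-1}}$ become $B^{2^m} = A^{2^m}$ and $A^{2^n} = C^{2^{\ell-1}}$ for $H \cong \DG{6}$. I would expand the left-hand sides by \eqref{eq:A2RS} from the proof of \cref{lem:A2NB2N}: $A^{2^m} = \alpha_A^{2^m}X^{2^m} + \beta_A^{2^m}Y^{2^m} + R_A^{2^{m-1}} + S_A^{2^{m-1}}$, with $R_A \in \F\Z(G) \cap \Delta^4$ and $S_A \in \Z(\F G) \cap [\F G, \F G]$, and likewise for $B$. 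Two features of this decomposition do the work. First, since $W^2 = 0$ and $m \ge 2$, raising $R_A$ to the $2^{m-1}$-th power kills every summand containing $W$, so $R_A^{2^{m-1}}$ is a combination of doubled monomials $X^{2^m r}Y^{2^m s}$ with $r + s \ge 2$; after substituting the group-specific powers ($X^{2^n}$ is $0$ or $W$ and $Y^{2^m}$ is $W$ or $0$ according to $G \cong \DG{3}$ or $G \cong \DG{5}$) only pure powers $X^{2^m r}$ with $r \ge 2$ can survive outside $\Gamma = Z\F G$. Second, $S_A^{2^{m-1}} = 0$ whenever $m - 1 \ge \ell$, by the argument in the proof of \cref{prop:AgemoCenter} that class sums of non-central elements are annihilated by the $2^r$-th power for $r \ge \ell$; and, by \eqref{eq:A2Delta3}, $S_A \equiv \alpha_A\beta_A Z \pmod{\Delta^3}$ when it does not vanish.

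I would first dispose of the cases with $m > \ell$: here $m - 1 \ge \ell$, so $S_A^{2^{m-1}} = S_B^{2^{m-1}} = 0$. Since $\Gamma = [\F G, \F G]\F G$ also equals $\Delta(\D{H})\F H$ because $\F G = \F H$, and $c \in \D{H}$, we have $C \in \Gamma$; hence the right-hand side $a^{2^m}C^{2^{\ell-1}}$ — respectively $0$, in the $\DG{6}$ case — lies in the ideal $\Gamma$. Therefore the component of $B^{2^m} + A^{2^m}$ outside $\Gamma$, that is, its projection onto the span of the basis vectors $X^rY^s$ of \cref{lem:BasisDelta} (those with no $Z$- or $W$-factor), vanishes. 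By the first feature above this component equals $(\alpha_A^{2^m} + \alpha_B^{2^m})X^{2^m}$ plus a combination of distinct basis vectors $X^{2^m r}$ with $r \ge 2$; since $X^{2^m}$ is itself a basis vector (a genuine $x$-power when $m < n$, and $= W \in \Gamma$ when $m = n$, in which case $R_A^{2^{m-1}} = R_B^{2^{m-1}} = 0$ and one compares the coefficient of $W$ instead), its coefficient $\alpha_A^{2^m} + \alpha_B^{2^m}$ must be zero, and injectivity of the Frobenius on $\F$ gives $\alpha_A = \alpha_B$. This covers $G \cong \DG{3}$, $H \cong \DG{4}$ with $n > m > \ell$ and $G \cong \DG{5}$, $H \cong \DG{6}$ with $m > \ell$.

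For the remaining cases $G \cong \DG{5}$, $H \cong \DG{6}$ with $n = m \le \ell$, one has $m - 1 < \ell$ and $S_A^{2^{m-1}}$ is no longer zero; I would replace the vanishing statement by the elementary estimate
\[
    S_A^{2^k} \equiv (\alpha_A\beta_A)^{2^k} Z^{2^k} \pmod{\Delta^{1 + 2^{k+1}}},
\]
proved by repeatedly squaring $S_A = \alpha_A\beta_A Z + (\text{term of }\Delta^3)$ and using $[Z^{2^i}, \Delta^j] \subseteq \Delta^{2^{i+1}+j}$. Here $R_A^{2^{n-1}} = 0$ as well, because in $\DG{5}$ the relation $Y^{2^n} = 0$ kills the monomials with $s \ge 1$ and $X^{2^{n+1}} = 0$ kills those with $r \ge 2$, $s = 0$. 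If $n = m = \ell$, then $X^{2^n} = W = Z^{2^{n-1}}$, and the above yields $A^{2^n} \equiv (\alpha_A^{2^n} + (\alpha_A\beta_A)^{2^{n-1}})W = (\alpha_A(\alpha_A + \beta_A))^{2^{n-1}}W \pmod{\Delta^{1+2^n}}$ and the same for $B$; since $W$ is a non-zero vector of $\Delta^{2^n}/\Delta^{1+2^n}$, comparing $A^{2^n} = B^{2^n}$ in this quotient gives $\alpha_A(\alpha_A+\beta_A) = \alpha_B(\alpha_B+\beta_B)$. If $n = m < \ell$, then $Z^{2^{n-1}}$ is a basis vector of weight $2^n$ while $W = X^{2^n}$ has weight $q = 2^\ell \ge 2^{n+1}$, so $A^{2^n} \equiv (\alpha_A\beta_A)^{2^{n-1}}Z^{2^{n-1}} \pmod{\Delta^{1+2^n}}$; but $A^{2^n} = C^{2^{\ell-1}} \in \Gamma^{2^{\ell-1}} = Z^{2^{\ell-1}}\F G \subseteq \Delta^{2^\ell} \subseteq \Delta^{1+2^n}$ is zero in $\Delta^{2^n}/\Delta^{1+2^n}$, forcing $(\alpha_A\beta_A)^{2^{n-1}} = 0$, i.e.\ $\alpha_A\beta_A = 0$; running the same argument on $B^{2^n} = A^{2^n}$ gives $\alpha_B\beta_B = 0$.

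The main obstacle, and the reason the conclusions are not uniform, is precisely the control of $S_A^{2^{m-1}}$ in the regime $m \le \ell$: this power does not disappear, it re-enters through the summand $\F\Z(G)$ of $\Z(\F G)$ rather than through $\Z(\F G) \cap [\F G, \F G]$, and its leading term $(\alpha_A\beta_A)^{2^{m-1}}Z^{2^{m-1}}$ has to be carried along. Setting up the stabilization estimate above, and noticing that $Z^{2^{m-1}}$ coincides with $W$ exactly when $\ell = m$ (so that it merges with the $X^{2^n}$-contribution) but has strictly smaller weight when $m < \ell$ (so that it remains visible against a right-hand side of higher weight), is the delicate part; everything else is bookkeeping with the basis of \cref{lem:BasisDelta}.
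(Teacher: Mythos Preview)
Your argument is correct, but the paper takes a shorter route: instead of reaching back to the decomposition \eqref{eq:A2RS} from the proof of \cref{lem:A2NB2N} and separately controlling $R_A^{2^{m-1}}$ and $S_A^{2^{m-1}}$, it simply applies \cref{lem:FormulaPowerMap}, which already delivers $A^{2^m} \equiv \alpha_A^{2^m}X^{2^m} + \beta_A^{2^m}Y^{2^m} + (\alpha_A\beta_A)^{2^{m-1}}Z^{2^{m-1}} \pmod{\Delta^{1+2^m}}$ in one stroke. Your inductive estimate $S_A^{2^k} \equiv (\alpha_A\beta_A)^{2^k}Z^{2^k} \pmod{\Delta^{1+2^{k+1}}}$, combined with $R_A^{2^{m-1}} \in \Delta^{2^{m+1}} \subseteq \Delta^{1+2^m}$, is in effect a reproof of that formula via the centre decomposition, so the two routes carry the same information. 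What you gain is an exact expression for $A^{2^m}$ and a uniform handling of both $m > \ell$ cases by projecting modulo $\Gamma$; what the paper gains is economy---once \cref{lem:FormulaPowerMap} is in hand each subcase becomes a two-line comparison in $\Delta^{2^m}/\Delta^{1+2^m}$, and for $\DG{5}/\DG{6}$ with $m > \ell$ no passage modulo $\Gamma$ is needed at all (that device is reserved for the $\DG{3}/\DG{4}$ case, where the relation in $H$ genuinely involves $C^{2^{\ell-1}}$).
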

\begin{proof}
    Recall that $\varphi$ denotes the $2^m$-power map $\Delta/\Delta^2 \to \Delta^{2^m}/\Delta^{1 + 2^m}$.
    In this proof, the fact that the Frobenius map on $\F$ is injective will be used without further comment.

    First consider $G \cong \DG{5}$ and $H \cong \DG{6}$.
    By \cref{lem:FormulaPowerMap} and $Y^{2^m} = 0$, we have
    \[
        \varphi(A + \Delta^2) = \alpha_A^{2^m}X^{2^m} + (\alpha_A\beta_A)^{2^{m - 1}}Z^{2^{m - 1}} + \Delta^{1 + 2^m}
    \]
    and, if additionally $n = m$ holds,
    \[
        \varphi(A + \Delta^2) = \alpha_A^{2^m}W + (\alpha_A\beta_A)^{2^{m - 1}}Z^{2^{m - 1}} + \Delta^{1 + 2^m}.
    \]
    Similar expressions hold for $\varphi(B + \Delta^2)$.
    Note that the relation $a^{2^m} = b^{2^m}$,
    which holds in $H$, implies $\varphi(A + \Delta^2) = \varphi(B + \Delta^2)$.

    Now, consider $m > \ell$.
    Then $Z^{2^{m - 1}} = 0$, and so $\varphi(A + \Delta^2) = \varphi(B + \Delta^2)$ implies
    $\alpha_A^{2^m}X^{2^m} + \Delta^{1 + 2^m} = \alpha_B^{2^m} X^{2^m} + \Delta^{1 + 2^m}$.
    As $X^{2^m} \in \FD_{2^m} \smallsetminus \FD_{1 + 2^m}$, we have $\alpha_A = \alpha_B$.
    Note that if additionally $n = m$ holds, then we have $X^{2^m} = W$, but this does not change the conclusion,
    as then $W \in \FD_{2^m} \smallsetminus \FD_{1 + 2^m}$.

    Next, consider $n = m = \ell$.
    Then $\varphi(A + \Delta^2) = \varphi(B + \Delta^2)$ implies
    \[
        (\alpha_A^{2^m} + (\alpha_A \beta_A)^{2^{m - 1}})W + \Delta^{1 + 2^m} =  (\alpha_B^{2^m} + (\alpha_B \beta_B)^{2^{m - 1}})W + \Delta^{1 + 2^m}.
    \]
    As $W \in \FD_{2^m} \smallsetminus \FD_{1 + 2^m}$,
    this implies $\alpha_A^2 + \alpha_A\beta_A = \alpha_B^2 + \alpha_B\beta_B$ and hence the claim.

    Finally, consider $n = m < \ell$.
    Then $W \in \Delta^{1 + 2^m}$
    and $C^{2^{\ell - 1}} \in \Delta^{1 + 2^m}$.
    So because of $a^{2^m} = b^{2^m} = c^{2^{\ell - 1}}$, we have
    \[
        \varphi(A + \Delta^2) = \varphi(B + \Delta^2) = 0 + \Delta^{1 + 2^m}.
    \]
    As
    \begin{align*}
        \varphi(A + \Delta^2) &= (\alpha_A \beta_A)^{2^{m - 1}}Z^{2^{m - 1}} + \Delta^{1 + 2^m}, \\
        \varphi(B + \Delta^2) &= (\alpha_B \beta_B)^{2^{m - 1}}Z^{2^{m - 1}} + \Delta^{1 + 2^m},
    \end{align*}
    this together with $Z^{2^{m - 1}} \in \FD_{2^m} \smallsetminus \FD_{1 + 2^m}$ implies $\alpha_A \beta_A = \alpha_B \beta_B = 0$.

    Finally, consider $G \cong \DG{3}$, $H \cong \DG{4}$ and $n > m > \ell$.
    Then by \cref{lem:FormulaPowerMap} and using $Y^{2^m} = W$, $Z^{2^\ell} = 0$ and $m > \ell$ we get from \eqref{eq:AB}:
    \begin{equation}\label{eq:PhiAB}
        \begin{aligned}
            \varphi(A + \Delta^2) &= \alpha_A^{2^m} X^{2^m} + \beta_A^{2^m} W + \Delta^{1 + 2^m}, \\
            \varphi(B + \Delta^2) &= \alpha_B^{2^m} X^{2^m} + \beta_B^{2^m} W + \Delta^{1 + 2^m}.
        \end{aligned}
    \end{equation}
    Now $H \cong \DG{4}$ means $b^{2^m} = a^{2^m}c^{2^{\ell - 1}}$, so
    \begin{equation}\label{eq:PhiBG4}
        \begin{aligned}
            \varphi(B + \Delta^2)
            &= B^{2^m} + \Delta^{1 + 2^m} = 1 + a^{2^m}c^{2^{\ell - 1}} + \Delta^{1 + 2^m} \\
            &= A^{2^m} + C^{2^{\ell - 1}} + A^{2^m} C^{2^{\ell - 1}} + \Delta^{1 + 2^m}.
        \end{aligned}
    \end{equation}
    We next consider $A^{2^m}$ and $B^{2^m}$ modulo $\Gamma + \Delta^{1 + 2^m}$ which is done by ``deleting the $Z$'s and $C$'s'', roughly speaking.
    Formally, \eqref{eq:PhiAB} and \eqref{eq:PhiBG4} imply
    \begin{align*}
        \alpha_A^{2^m} X^{2^m}
        \equiv A^{2^m}
        \equiv B^{2^m}
        \equiv \alpha_B^{2^m} X^{2^m}
        \mod \Gamma + \Delta^{1 + 2^m}.
    \end{align*}
    Now as $X^{2^m}$ does not lie in $\Gamma + \Delta^{1 + 2^m}$ by \eqref{eq:BasisGamma} and \eqref{eq:BasisDelta},
    we get $\alpha_A = \alpha_B$.
\end{proof}

With these preparations, we are finally ready to answer the modular isomorphism problem for all the remaining cases.
In the first two cases, $m \le \ell$ holds, and we use the explicit approximations of $A^{2^n}$ and $B^{2^n}$ given by \cref{lem:A2NB2N}.
In the last two cases, $m > \ell$ holds, and we use the explicit approximation of $C^{2^{\ell - 1}}$ given by \cref{lem:CModulo,lem:Psi}.

\begin{lem}\label{lem:NeqMltLG5vsG6}
    If $n = m < \ell$, then $\F \DG{5} \not\cong \F \DG{6}$.
\end{lem}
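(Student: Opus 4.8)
The plan is to argue by contradiction inside the group base framework of \eqref{eq:Common}, taking $G = \DG{5}$, $H = \DG{6}$ and $n = m < \ell$, and to determine the scalars $\alpha_A$, $\alpha_B$ from the expansions \eqref{eq:AB} in two incompatible ways. First I would read off from \cref{lem:Alphas}(3) that $\alpha_A\beta_A = \alpha_B\beta_B = 0$, and feed this into the non-degeneracy condition \eqref{eq:Determinant}: $\alpha_A$ and $\alpha_B$ cannot both be non-zero (that would force $\beta_A = \beta_B = 0$, hence $\alpha_A\beta_B + \alpha_B\beta_A = 0$) and cannot both vanish (same sum would be $0$). So exactly one of $\alpha_A$, $\alpha_B$ is zero, and in particular $\alpha_A \ne \alpha_B$.

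For the competing statement I would invoke \cref{lem:A2NB2N}. Since $n = m$, the defining relations of $G = \DG{5}$ give $X^{2^n} = x^{2^n} + 1 = w + 1 = W$ and $Y^{2^n} = y^{2^m} + 1 = 0$, so the lemma specializes to $A^{2^n} \equiv \alpha_A^{2^n}W$ and $B^{2^n} \equiv \alpha_B^{2^n}W$ modulo $\Z(\F G) \cap [\F G, \F G]$. In $H = \DG{6}$ the relation $b^{2^m} = a^{2^m}$ together with $n = m$ gives $A^{2^n} = (a+1)^{2^n} = a^{2^n} + 1 = b^{2^n} + 1 = B^{2^n}$, so adding the two congruences yields $(\alpha_A^{2^n} + \alpha_B^{2^n})W \in \Z(\F G) \cap [\F G, \F G]$. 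As $W = w + 1$ lies in $\F\Z(G)$, as the two summands of the decomposition $\Z(\F G) = \F\Z(G) \oplus (\Z(\F G) \cap [\F G, \F G])$ meet trivially, and as $W \ne 0$ (because $\ell \ge 2$), this forces $\alpha_A^{2^n} = \alpha_B^{2^n}$, hence $\alpha_A = \alpha_B$ by injectivity of the Frobenius on $\F$. This contradicts $\alpha_A \ne \alpha_B$, which completes the proof.

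I do not anticipate a genuine obstacle here, since the substantive work sits in \cref{lem:Alphas,lem:A2NB2N}; what remains is only to combine them with the two group relations specific to $\DG{5}$ and $\DG{6}$ in this regime. The only small points to be careful about are that the case analysis on the pairs $(\alpha_A,\beta_A)$ and $(\alpha_B,\beta_B)$ is exhaustive — which it is, thanks to \eqref{eq:Determinant} — and that $W \ne 0$ is recorded before it is cancelled.
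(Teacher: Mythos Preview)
Your proof is correct and follows essentially the same route as the paper's: both combine \cref{lem:Alphas}(3) with \cref{lem:A2NB2N} and the relation $a^{2^n}=b^{2^n}$ in $\DG{6}$ to reach a contradiction with \eqref{eq:Determinant}. The only cosmetic difference is ordering: the paper first derives $\alpha_A=\alpha_B$ from \cref{lem:A2NB2N} and then observes directly that $\alpha_A\beta_B+\alpha_B\beta_A=0$, whereas you first extract $\alpha_A\ne\alpha_B$ from \cref{lem:Alphas} via a small case split and then derive $\alpha_A=\alpha_B$; the intermediate case analysis is unnecessary but harmless.
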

\begin{proof}
    Assume the notation \eqref{eq:AB}, $n = m < \ell$, $G \cong \DG{5}$ and $H \cong \DG{6}$.
    By the relations $X^{2^n} = W$ and $Y^{2^n} = 0$, we have
    \[
        A^{2^n} \equiv \alpha_A^{2^n} W \mod \Z(\F G) \cap [\F G, \F G]
    \]
    by \cref{lem:A2NB2N}.
    Similarly, we have
    \[
        B^{2^n} \equiv \alpha_B^{2^n} W \mod \Z(\F G) \cap [\F G, \F G].
    \]
    As in $H$ the relation $a^{2^n} = b^{2^n}$ holds by $n = m$,
    from the fact that $W$ is a non-zero element in $\F\Z(G)$
    and the decomposition $\Z(\F G) = \F\Z(G) \oplus (\Z(\F G) \cap [\F G, \F G])$, we conclude that $\alpha_A = \alpha_B$.

    As $n = m < \ell$, \cref{lem:Alphas} implies $\alpha_A \beta_A = \alpha_B \beta_B = 0$.
    Hence, as $\alpha_A = \alpha_B$, we get $\alpha_A \beta_B + \alpha_B \beta_A = 0$, which contradicts \eqref{eq:Determinant}.
\end{proof}

The proof of the next case parallels that of the first case.

\begin{lem}\label{lem:NeqMeqLG5vsG6}
    If $n = m = \ell$, then $\F \DG{5} \not\cong \F \DG{6}$.
\end{lem}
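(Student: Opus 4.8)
The plan is to mirror the proof of \cref{lem:NeqMltLG5vsG6} almost verbatim. I would assume the notation \eqref{eq:AB} together with $n = m = \ell$, $G \cong \DG{5}$ and $H \cong \DG{6}$. The crucial observation is that the relations $x^{2^n} = z^{2^{\ell - 1}}$ and $y^{2^m} = 1$ of $\DG{5}$ still translate into $X^{2^n} = W$ and $Y^{2^n} = 0$ in this range, exactly as they did when $n = m < \ell$. First I would feed these two relations into \cref{lem:A2NB2N} to obtain
\[
    A^{2^n} \equiv \alpha_A^{2^n} W \quad\text{and}\quad B^{2^n} \equiv \alpha_B^{2^n} W \mod \Z(\F G) \cap [\F G, \F G].
\]
Since $n = m$, the relation $a^{2^n} = b^{2^n}$ holds in $H$; as $W$ is a non-zero element of $\F\Z(G)$, the decomposition $\Z(\F G) = \F\Z(G) \oplus (\Z(\F G) \cap [\F G, \F G])$ forces $\alpha_A^{2^n} = \alpha_B^{2^n}$, hence $\alpha_A = \alpha_B$ by injectivity of the Frobenius map.

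Next I would invoke \cref{lem:Alphas}(2): since $n = m = \ell$, it yields $\alpha_A(\alpha_A + \beta_A) = \alpha_B(\alpha_B + \beta_B)$. Substituting $\alpha_B = \alpha_A$ and cancelling $\alpha_A^2$ in characteristic~$2$ leaves $\alpha_A\beta_A = \alpha_A\beta_B$, that is $\alpha_A(\beta_A + \beta_B) = 0$. Consequently $\alpha_A\beta_B + \alpha_B\beta_A = \alpha_A\beta_B + \alpha_A\beta_A = \alpha_A(\beta_A + \beta_B) = 0$, contradicting \eqref{eq:Determinant}; this contradiction finishes the proof.

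The only point requiring care is to confirm that the two cited lemmas genuinely apply. For \cref{lem:A2NB2N} one checks $X^{2^{n + 1}} = X^{2^n}Y^{2^n} = Y^{2^{n + 1}} = W^2 = 0$ for $\DG{5}$ when $n = m = \ell$, which holds because $x^{2^{n + 1}} = (z^{2^{\ell - 1}})^2 = z^{2^\ell} = 1$ and $y^{2^n} = 1$, using $\ell = n \ge 2$; and one notes that \cref{lem:Alphas}(2) is precisely the hypothesis $n = m = \ell$. I expect no genuine obstacle: the argument is the same bookkeeping as in \cref{lem:NeqMltLG5vsG6}, the only change being that the quadratic identity supplied by \cref{lem:Alphas} is now $\alpha_A(\alpha_A + \beta_A) = \alpha_B(\alpha_B + \beta_B)$ instead of $\alpha_A\beta_A = \alpha_B\beta_B = 0$, and it still collapses to $\alpha_A(\beta_A + \beta_B) = 0$ once $\alpha_A = \alpha_B$ is known.
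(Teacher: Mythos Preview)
Your proposal is correct and matches the paper's own proof essentially line for line: the paper likewise derives $\alpha_A = \alpha_B$ by the argument of \cref{lem:NeqMltLG5vsG6} via \cref{lem:A2NB2N}, then combines this with \cref{lem:Alphas}(2) to obtain $\alpha_A\beta_A = \alpha_B\beta_B$ and hence $\alpha_A\beta_B + \alpha_B\beta_A = 0$, contradicting \eqref{eq:Determinant}. Your extra verification that \cref{lem:A2NB2N} applies is harmless but unnecessary, since that lemma is stated under the standing hypotheses \eqref{eq:Common} and thus covers the present case automatically.
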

\begin{proof}
    Assume the notation \eqref{eq:AB}, $n = m = \ell$, $G \cong \DG{5}$ and $H \cong \DG{6}$.
    We conclude that $\alpha_A = \alpha_B$ by the same arguments as in the proof of \cref{lem:NeqMltLG5vsG6}.

    As $n = m = \ell$, \cref{lem:Alphas} and $\alpha_A = \alpha_B$ imply $\alpha_A \beta_A = \alpha_B \beta_B$.
    Hence $\alpha_A \beta_B + \alpha_B \beta_A = 0$, which contradicts \eqref{eq:Determinant}.
\end{proof}

\begin{lem}\label{lem:MgtLG5vsG6}
    If $m > \ell$, then $\F \DG{5} \not\cong \F \DG{6}$.
\end{lem}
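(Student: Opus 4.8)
The plan is to assume $\F\DG{5} \cong \F\DG{6}$ and reach a contradiction by evaluating the power map $\psi$ of \cref{lem:Psi} on $C + \Gamma^2$ in two independent ways. So I would put myself in the situation \eqref{eq:Common} with $G \cong \DG{5}$ and $H \cong \DG{6}$, keep the notation \eqref{eq:AB}, and record two facts: that $\lambda := \alpha_A\beta_B + \alpha_B\beta_A \ne 0$ by \eqref{eq:Determinant} (so in particular $C \notin \Gamma^2$), and that $q = 2^n$ because $n \ge m > \ell$.

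First I would compute $\psi(C+\Gamma^2)$ from the structural lemmas. By \cref{lem:CModulo} we have $C \equiv \lambda Z + \mu XZ + \nu YZ \pmod{\Delta^4}$ with $\mu = \lambda(1 + \alpha_A + \alpha_B)$, and since $C \in \Gamma = Z\F G$ the $\Delta^4$-error is of the form $TZ$ with $T \in \Delta^2$, so \cref{lem:Psi} applies and yields
\[
    \psi(C + \Gamma^2) = \lambda^{2^{\ell-1}}W + \mu^{2^{\ell-1}}X^{2^{\ell-1}}W + \nu^{2^{\ell-1}}Y^{2^{\ell-1}}W + \bigl(\Gamma^{1+2^{\ell-1}} + \Delta^d\bigr).
\]
In particular, the coefficient of $X^{2^{\ell-1}}W$ in $\psi(C+\Gamma^2)$ is $\mu^{2^{\ell-1}}$.

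Next I would compute $\psi(C+\Gamma^2)$ directly from $C^{2^{\ell-1}}$, exploiting the defining relations of $H \cong \DG{6}$ from \cref{thm:Presentations}. The relation $z^{2^{\ell-1}} = x^{2^n}$ translates to $c^{2^{\ell-1}} = a^{2^n}$ in $H \le (\F G)^\times$, whence $C^{2^{\ell-1}} = 1 + c^{2^{\ell-1}} = 1 + a^{2^n} = A^{2^n}$. By (the proof of) \cref{lem:A2NB2N}, together with the relations $X^{2^n} = W$ and $Y^{2^n} = 0$ valid in $G \cong \DG{5}$, one has $A^{2^n} = \alpha_A^{2^n}W + S_A^{2^{n-1}}$ for some $S_A \in \Z(\F G) \cap [\F G, \F G]$. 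Now $S_A$ is a linear combination of class sums of non-central elements of $G$, each of which is killed by the $2^{n-1}$-th power map because $\D{G} = \langle z \rangle$ has order $2^\ell$ and $n - 1 \ge \ell$ (cf.\ the proof of \cref{prop:AgemoCenter}); as these class sums are central they commute, so $S_A^{2^{n-1}} = 0$. Hence $\psi(C+\Gamma^2) = C^{2^{\ell-1}} + (\Gamma^{1+2^{\ell-1}}+\Delta^d) = \alpha_A^{2^n}W + (\Gamma^{1+2^{\ell-1}}+\Delta^d)$, whose coefficient of $X^{2^{\ell-1}}W$ is $0$.

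Finally I would compare the two expressions. Since $W$, $X^{2^{\ell-1}}W$ and $Y^{2^{\ell-1}}W$ are linearly independent modulo $\Gamma^{1+2^{\ell-1}}+\Delta^d$ (the very point of the definition of $d$), comparing coefficients of $X^{2^{\ell-1}}W$ forces $\mu^{2^{\ell-1}} = 0$, hence $\mu = 0$ because the Frobenius map on $\F$ is injective, hence $\lambda(1 + \alpha_A + \alpha_B) = 0$, and therefore $\alpha_A + \alpha_B = 1$ as $\lambda \ne 0$. This contradicts $\alpha_A = \alpha_B$, which holds by \cref{lem:Alphas}\,(1), and the lemma follows. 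The step I expect to require the most care is controlling the error term $S_A^{2^{n-1}}$ in $A^{2^n}$: a priori the commutator part of $A^{2^n}$ could feed into the $X^{2^{\ell-1}}W$-coefficient of $C^{2^{\ell-1}}$, and the whole comparison hinges on the inequality $n - 1 \ge \ell$ (a consequence of $m > \ell$) making that contribution vanish in the relevant quotient.
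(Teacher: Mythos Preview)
Your proof is correct and follows essentially the same strategy as the paper: compute $\psi(C+\Gamma^2)$ via \cref{lem:CModulo,lem:Psi}, show that $C^{2^{\ell-1}}$ lies in $\F W$, and use \cref{lem:Alphas} to derive a contradiction with \eqref{eq:Determinant}. The only difference is in how you establish $C^{2^{\ell-1}}\in\F W$: the paper argues that $C^{2^{\ell-1}}=(A^2)^{2^{n-1}}\in\Agemo_{n-1}(\Z(\F G))\cap\Gamma=\F W$ directly from \cref{prop:AgemoCenter} and \cref{lem:Center}, whereas you unpack $A^{2^n}$ via the proof of \cref{lem:A2NB2N} and then kill $S_A^{2^{n-1}}$ by the same class-sum argument that underlies \cref{prop:AgemoCenter}; both routes use $n-1\ge\ell$ in exactly the same way.
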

\begin{proof}
    Assume the notation \eqref{eq:AB}, $m > \ell$, $G \cong \DG{5}$ and $H \cong \DG{6}$.
    As $n > \ell$ we obtain $\Agemo_{n - 1}(\Z(\F G)) = \F(\Agemo_{n - 1}(\Z(G)))$ by \cref{prop:AgemoCenter}.
    This is generated by $W$ as an algebra since \cref{lem:Center} shows that $\Agemo_{n - 1}(\Z(G))$ is generated by $w$.
    Thus $\Agemo_{n - 1}(\Z(\F G)) \cap \Gamma = \F W$.
    As $a^2$ is central and $c^{2^{\ell - 1}} = a^{2^n}$ by the defining relations of $H$,
    we see that $C^{2^{\ell - 1}} = (A^2)^{2^{n - 1}}$ lies in $\Agemo_{n - 1}(\Z(\F G))$.
    Hence $C^{2^{\ell - 1}} \in \F W$.

    Recall that $\psi$ is defined as a $2^{\ell - 1}$-power map on $\Gamma/\Gamma^2$.
    Now by \cref{lem:CModulo,lem:Psi} we know
    \[
        \psi(C + \Gamma^2) =  \lambda^{2^{\ell - 1}}W + \mu^{2^{\ell - 1}}X^{2^{\ell - 1}}W + \nu^{2^{\ell - 1}}Y^{2^{\ell - 1}}W + \left(\Gamma^{1 + 2^{\ell - 1}} + \Delta^d\right),
    \]
    where $\lambda = \alpha_A \beta_B + \alpha_B \beta_A$, $\mu = \lambda(1 + \alpha_A + \alpha_B)$ and $\nu = \lambda(1 + \beta_A + \beta_B)$.

    As $X^{2^{\ell - 1}}W$ does not lie in $\F W$ and neither in $\Gamma^{1 + 2^{\ell - 1}} + \Delta^d$ by \eqref{eq:BasisGamma} and \eqref{eq:BasisDelta},
    the coefficient of it in the expression of $\psi(C + \Gamma^2)$ is $0$ by $C^{2^{\ell - 1}} \in \F W$.
    This coefficient equals a power of $\mu$ and by \cref{lem:Alphas}, this implies $\lambda = 0$, which contradicts \eqref{eq:Determinant}.
\end{proof}

The proof of the last case parallels that of the previous case.

\begin{lem}\label{lem:NgtMgtLG3vsG4}
    If $n > m > \ell$, then $\F \DG{3} \not\cong \F \DG{4}$.
\end{lem}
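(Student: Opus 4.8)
The plan is to re-run the argument of \cref{lem:MgtLG5vsG6} almost verbatim; the only new point is that here $C^{2^{\ell - 1}}$ is not immediately visible as a scalar multiple of $w$, but it can be trapped inside a canonical power of the center of $\F G$. So I would assume the notation of \eqref{eq:AB}, $n > m > \ell$, $G \cong \DG{3}$ and $H \cong \DG{4}$, and aim for a contradiction with \eqref{eq:Determinant}. Writing $\lambda, \mu, \nu$ as in \cref{lem:CModulo}, \cref{lem:Alphas} (for the case $G \cong \DG{3}$, $H \cong \DG{4}$, $n > m > \ell$) gives $\alpha_A = \alpha_B$, so $\mu = \lambda(1 + \alpha_A + \alpha_B) = \lambda$; since $\lambda \ne 0$ by \eqref{eq:Determinant}, everything reduces to showing $\mu = 0$.

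The first step is to locate $C^{2^{\ell - 1}}$. Since $H \cong \DG{4}$ we have $b^{2^m} = a^{2^m} c^{2^{\ell - 1}}$, and $a^2$ and $b^2$ are central in $H$ by \cref{lem:Center}; hence $c^{2^{\ell - 1}} = a^{-2^m} b^{2^m} = (a^{-2} b^2)^{2^{m - 1}}$ with $a^{-2} b^2 \in \Z(H)$, so $c^{2^{\ell - 1}} \in \Agemo_{m - 1}(\Z(H))$ and $C^{2^{\ell - 1}} = 1 + c^{2^{\ell - 1}} \in \Delta(\Agemo_{m - 1}(\Z(H)))$. As $\D{H}$ and $\D{G}$ are cyclic of order $2^\ell$ and $m - 1 \ge \ell$, two applications of \cref{prop:AgemoCenter} (to $H$ and to $G$, using that the center is an algebra invariant) yield
\[
    C^{2^{\ell - 1}} \in \F(\Agemo_{m - 1}(\Z(H))) = \Agemo_{m - 1}(\Z(\F G)) = \F(\Agemo_{m - 1}(\Z(G))).
\]
By \cref{lem:Center} (recalling $y^{2^m} = z^{2^{\ell - 1}} = w$ in $\DG{3}$) one has $\Agemo_{m - 1}(\Z(G)) = \langle x^{2^m}\rangle \times \langle w\rangle$, so $\F(\Agemo_{m - 1}(\Z(G)))$ is the subalgebra generated by $X^{2^m} = 1 + x^{2^m}$ and $W = 1 + w$, with $\F$-basis consisting of the basis vectors $X^{2^m i}W^u$ ($0 \le i \le 2^{n - m} - 1$, $0 \le u \le 1$) from \cref{lem:BasisDelta}. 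Now $C = 1 + c \in \Delta(\D{H})\F H = \Gamma$, hence $C^{2^{\ell - 1}} \in \Gamma$; intersecting the above with $\Gamma$ removes the basis vectors with $u = 0$, so $C^{2^{\ell - 1}}$ is an $\F$-linear combination of the $X^{2^m i}W$ with $0 \le i \le 2^{n - m} - 1$. For $i \ge 1$ the weight of $X^{2^m i}W$ is at least $2^m + q \ge 1 + 2^{\ell - 1} + q = d$ (since $m > \ell$ forces $2^m \ge 1 + 2^{\ell - 1}$), so $X^{2^m i}W \in \Delta^d$; therefore $C^{2^{\ell - 1}} \in \F W + \Delta^d$.

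The last step is then identical to the end of \cref{lem:MgtLG5vsG6}: combining \cref{lem:CModulo} with \cref{lem:Psi},
\[
    \psi(C + \Gamma^2) = \lambda^{2^{\ell - 1}}W + \mu^{2^{\ell - 1}}X^{2^{\ell - 1}}W + \nu^{2^{\ell - 1}}Y^{2^{\ell - 1}}W + (\Gamma^{1 + 2^{\ell - 1}} + \Delta^d),
\]
while $\psi(C + \Gamma^2) = C^{2^{\ell - 1}} + (\Gamma^{1 + 2^{\ell - 1}} + \Delta^d) \in \F W + (\Gamma^{1 + 2^{\ell - 1}} + \Delta^d)$ by the previous step. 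As $X^{2^{\ell - 1}}W$ is a basis vector of $\F G$ lying neither in $\F W$ nor in $\Gamma^{1 + 2^{\ell - 1}} + \Delta^d$ (by \eqref{eq:BasisGamma} and \eqref{eq:BasisDelta}; this is exactly the point of the definition of $d$), comparing the coefficient of $X^{2^{\ell - 1}}W$ forces $\mu^{2^{\ell - 1}} = 0$, hence $\mu = 0$ since the Frobenius on $\F$ is injective. With $\mu = \lambda$ this gives $\lambda = 0$, contradicting \eqref{eq:Determinant}.

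I expect the only real work to be in the second paragraph: in \cref{lem:MgtLG5vsG6} the central power $c^{2^{\ell - 1}} = a^{2^n}$ was already a power of $a^2$ sitting in the rank-one group $\Agemo_{n - 1}(\Z(G)) = \langle w\rangle$, whereas here $\Agemo_{m - 1}(\Z(\DG{3})) = \langle x^{2^m}\rangle \times \langle w\rangle$ is genuinely larger, and one must verify---using the membership $C^{2^{\ell - 1}} \in \Gamma$ together with the weight estimate $2^m + q \ge d$, where $m > \ell$ enters essentially---that after passing modulo $\Gamma^{1 + 2^{\ell - 1}} + \Delta^d$ everything still collapses onto $\F W$.
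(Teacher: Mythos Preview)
Your proof is correct and follows essentially the same route as the paper's: locate $C^{2^{\ell-1}}$ inside $\Agemo_{m-1}(\Z(\F G)) \cap \Gamma \subseteq \F W + \Delta^d$ via the relation $c^{2^{\ell-1}} = (a^{-2}b^2)^{2^{m-1}}$ and the weight estimate $2^m + q \ge d$, then read off $\mu = 0$ from \cref{lem:CModulo,lem:Psi} and combine with $\alpha_A = \alpha_B$ from \cref{lem:Alphas}. The only cosmetic difference is that the paper invokes \cref{prop:AgemoCenter} once (since $1 + a^{-2}b^2 \in \Z(\F G)$ already gives $C^{2^{\ell-1}} \in \Agemo_{m-1}(\Z(\F G))$ directly), whereas you pass through $\F(\Agemo_{m-1}(\Z(H)))$ first; both are fine.
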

\begin{proof}
    Assume the notation \eqref{eq:AB}, $n > m > \ell$, $G \cong \DG{3}$ and $H \cong \DG{4}$.
    As $m > \ell$ we obtain $\Agemo_{m - 1}(\Z(\F G)) = \F(\Agemo_{m - 1}(\Z(G)))$ by \cref{prop:AgemoCenter}.
    This is generated by $X^{2^m}$ and $W$ as an algebra since \cref{lem:Center} shows that $\Agemo_{m - 1}(\Z(G))$ is generated by $x^{2^m}$ and $w$.
    Recall that $d = 1 + 2^{\ell - 1} + q$.
    Now, the weight of $X^{2^m}W$ is $2^m + q$ which is bigger than $d$ as $m > \ell$.
    Thus $\Agemo_{m - 1}(\Z(\F G)) \cap \Gamma \subseteq \F W + \Delta^d$.
    As $a^{-2}b^2$ is central and
    $(a^{-2}b^2)^{2^{m - 1}} = a^{-2^m}b^{2^m} = a^{-2^m}a^{2^m}c^{2^{\ell - 1}} = c^{2^{\ell - 1}}$
    by the defining relations of $H$,
    we see that $C^{2^{\ell - 1}} = (1 + a^{-2}b^2)^{2^{m - 1}}$ lies in $\Agemo_{m - 1}(\Z(\F G))$.
    Hence $C^{2^{\ell - 1}} \in \F W + \Delta^d$.

    The rest of the argument is essentially the same as at the end of the proof of the previous lemma:
    by the previous paragraph we known that the coefficient of $X^{2^{\ell - 1}}W$ in the expression of $\psi(C + \Gamma^2)$ is $0$.
    On the other hand, by \cref{lem:CModulo,lem:Psi}, this coefficient equals a power of $\mu$.
    By \cref{lem:Alphas} we conclude that $\lambda = 0$, which contradicts \eqref{eq:Determinant}.
\end{proof}

Now we give a proof of \cref{main:A} using the results obtained so far.

\begin{proof}[Proof of \cref{main:A}]
    It remains to show that the groups in \cref{cor:Counterexamples} are
    the only counterexamples to the modular isomorphism problem in our class.

    First we show that $H$ is two-generated and the central quotient of $H$ is dihedral.
    The minimal number of generators is a well-known invariant
    and a proof can be found in~\cite[Lemma 14.2.7]{Pas77}.
    Note that the central quotient of $H$ is dihedral if and only if $|H : \Z(H)| \ge 8$ and $|H : \D{H}\Z(H)| = 4$;
    thus it is also an invariant by a result of Margolis, Sakurai and Stanojkovski \cite[Corollary 2.8, Lemma 5.9]{MSS23}.
    Since the order and the isomorphism type of the abelianization are invariant, the groups have common parameters $n$, $m$ and $\ell$.

    By \cref{thm:Presentations,thm:IsomorphicAlgebras,prop:RemainingCases}, it remains to show that $\F \DG{5} \not\cong \F \DG{6}$
    when either $n = m \ge 2$ or $n > m > \ell$ and that $\F \DG{3} \not\cong \DG{4}$ when $n > m > \ell$.
    If $n = m \ge 2$, then we conclude that $\F \DG{5} \not\cong \F \DG{6}$ from \cref{lem:NeqMltLG5vsG6,lem:NeqMeqLG5vsG6,lem:MgtLG5vsG6}.
    If $n > m > \ell$, then we conclude that $\F \DG{5} \not\cong \F \DG{6}$ from \cref{lem:MgtLG5vsG6},
    and $\F \DG{3} \not\cong \F \DG{4}$ from \cref{lem:NgtMgtLG3vsG4}.
\end{proof}

\begin{rmk}\label{rmk:Assignment}
    The arguments involved in the group base approximation partly explain the choice we make for the isomorphism in \cref{thm:IsomorphicAlgebras}.
    There are of course many more choices for this isomorphism, but \cref{lem:Gammas} imposes some restrictions, for example.
    A similar argument, as in the first case proved in \cref{lem:Alphas}, can be used to show that $\alpha_A = \alpha_B$ needs to hold in the setting of \cref{thm:IsomorphicAlgebras}.
    Eventually the choice we make for the isomorphism seems to be the easiest.

    Moreover, the arguments in Section~\ref{sec:PositiveSolutions} suggest that
    the straightforward calculations in the proof of \cref{thm:IsomorphicAlgebras} not only make this proof easy, but also make it possible.
    Any relation in a group $H$ that requires to compute an actual commutator in the group algebra of another group $G$ for some given elements
    is not only difficult to verify by hand or even by computer;
    it imposes conditions that are difficult to meet in the first place.
\end{rmk}

\appendix
\section{Distinction of groups}\label{sec:Appendix}
In the class of two-generated finite $2$-groups with dihedral central quotient,
each group has a presentation of the form in \cref{thm:Presentations}.
The goal of this appendix is to establish by group-theoretical arguments when these groups are non-isomorphic
and to obtain the classification of groups within this class.
Let $n$, $m$ and $\ell$ denote positive integers.
Recall that a group is \emph{homocyclic} if it is isomorphic to a direct product of copies of a cyclic group.

\begin{thm}\label{thm:Hexachotomy}
    The six groups $\DG{1}$, \dots, $\DG{6}$ are pairwise non-isomorphic
    if they do not have homocyclic abelianizations \textup{(}i.e. $n > m$\textup{)}.
\end{thm}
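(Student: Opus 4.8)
The plan is to separate the six groups by a short hierarchy of isomorphism invariants organised around the three ``binary parameters'' hidden in the presentations of \cref{thm:Presentations}: whether $x^{2^n}$ equals $1$ or the distinguished central involution $w:=z^{2^{\ell-1}}$, whether $y^{2^m}$ involves $w$, and whether $y^{2^m}$ involves $x^{2^m}$. All six groups share order $2^{n+m+\ell}$, abelianisation $\Cyc{2^n}\times\Cyc{2^m}$ and derived subgroup $\D{G}\cong\Cyc{2^\ell}$, so these do not help; but $\D{G}=\langle z\rangle$ is characteristic, so is $\langle w\rangle=\D{G}\cap\Z(G)$, and so is the abelian maximal subgroup $N:=\CC_G(\D{G})=\langle x^2,y^2,z,xy\rangle$, of index $2$. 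The whole task is to detect these three parameters group-theoretically.

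The first step is to read off the isomorphism type of $\Z(G)$ from \cref{lem:Center}. Its exponent is $2^{n-1}$ for $\DG{1},\dots,\DG{4}$ and $2^n$ for $\DG{5},\DG{6}$, which recovers the first parameter; and, as soon as $m\ge 2$, the number of nontrivial invariant factors of $\Z(G)$ is $3$ for $\DG{1},\DG{2}$ and $2$ for $\DG{3},\dots,\DG{6}$, which recovers the second. Hence for $m\ge 2$ the center alone partitions the six groups into the pairs $\{\DG{1},\DG{2}\}$, $\{\DG{3},\DG{4}\}$, $\{\DG{5},\DG{6}\}$; for $m=1$ it at least splits off $\{\DG{5},\DG{6}\}$, whose center is cyclic of order $2^n$.

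It remains to separate the two groups inside each pair, i.e.\ to detect the third parameter. In the generic parameter ranges this is done by $N$: starting from $(xy)^2=x^2y^2z^{-1}$ and iterating, one finds that the order of $xy$, hence $\exp N$, drops by a factor of $2$ precisely when $y^{2^m}$ picks up the summand $x^{2^m}$ --- this is the calculation of \cref{lem:G1vsG2}, and it carries over to $\DG{3}$ vs.\ $\DG{4}$ and $\DG{5}$ vs.\ $\DG{6}$. In the remaining, degenerate ranges (such as $n=m+1$, or $n$ small compared with $\ell$), where $N$ and even $\Z(G)$ become insensitive to the third parameter, I would use two complementary counts, both manifestly invariants. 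For the pairs $\{\DG{1},\DG{2}\}$ and $\{\DG{5},\DG{6}\}$ (where $y^{2^m}$ does not involve $w$) I would compare $\lvert\{\,g\in G:g^{2^m}=1\,\}\rvert$: splitting $G$ along the characteristic subgroup $N$, the coset $G\smallsetminus N$ contributes nothing in the ``$y^{2^m}=x^{2^m}$'' group, since a $2^m$-th power there lands on the nonzero central element $x^{2^m}$, but it contributes a full block of elements in the ``$y^{2^m}=1$'' group. For the pair $\{\DG{3},\DG{4}\}$ (where $y^{2^m}$ does involve $w$) I would instead ask whether the canonical involution $w$ is of the form $g^{2^m}$ for some $g\in G\smallsetminus N$: this holds in $\DG{3}$, with $g$ built from $y$ and a suitable central square, but fails in $\DG{4}$, where every such $2^m$-th power lies in $x^{2^m}\langle w\rangle\smallsetminus\langle w\rangle$. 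Finally, for $m=1$ I would add the rank of a maximal elementary abelian subgroup: $\DG{1}$ contains $\langle x^{2^{n-1}},y,w\rangle\cong\Cyc{2}^3$, while in $\DG{2},\DG{3},\DG{4}$ every involution is central and in $\DG{5},\DG{6}$ the involutions span a subgroup of rank at most $2$; together with the previous invariants this settles the $m=1$ case. Assembling the steps distinguishes all $\binom{6}{2}$ pairs.

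The main obstacle is exactly this last step in the degenerate ranges --- detecting the third parameter once $\Z(G)$ and $\CC_G(\D{G})$ have both collapsed across a pair. There no single clean invariant handles all three pairs at once, and one is forced into explicit element counts whose bookkeeping depends on the mutual sizes of $n$, $m$, $\ell$ and on carefully tracking how a $2^m$-th (or $2^{m-1}$-st) power propagates through the relation $y^{2^m}=1$ versus $y^{2^m}=x^{2^m}$ versus $y^{2^m}=x^{2^m}w$; the pair $\{\DG{3},\DG{4}\}$ is the most delicate, mirroring the fact that on the group-algebra side it is exactly this pair that requires the ``group base approximation''. A secondary nuisance is the specialisation $m=1$, where several of the cleaner invariants degenerate simultaneously and the elementary-abelian-rank argument must be brought in as a substitute.
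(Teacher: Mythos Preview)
Your approach is genuinely different from the paper's. The appendix proves \cref{thm:Hexachotomy,thm:Trichotomy} jointly by a double induction on $(n,m)$: after the base cases (\cref{prop:MaximalClass} and \cref{lem:CornerCases}) it invokes \cref{lem:MaximalQuotients}, which tabulates all maximal quotients of each $G_{\scriptsizetextdice{\textnormal{?}}}(n,m,\ell)$ as groups of the same shape with one parameter lowered, and then separates each remaining pair by exhibiting a maximal quotient of one that (by the inductive hypothesis) is not a maximal quotient of the other. No explicit element-counting or power-map bookkeeping is needed; the price is the tabulation of \cref{tbl:MQCase1}--\cref{tbl:MQCase5}. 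Your strategy, by contrast, stays at a fixed parameter triple and layers direct invariants: first $\Z(G)$, then $\exp\CC_G(\D{G})$ in the style of \cref{lem:G1vsG2}, then the existence of $g\in G\smallsetminus\CC_G(\D{G})$ with $g^{2^m}=1$ or $g^{2^m}=w$, and finally elementary-abelian rank for $m=1$. Where it works, this is pleasantly concrete and close to the invariants used in \cref{sec:PositiveSolutions}.

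There is, however, a real gap in the $m=1$ case. With $m=1$ the center only splits off $\{\DG{5},\DG{6}\}$, so the three ``pair'' invariants you designed for $\{\DG{1},\DG{2}\}$, $\{\DG{3},\DG{4}\}$, $\{\DG{5},\DG{6}\}$ are now being asked to separate all of $\DG{2},\DG{3},\DG{4}$ from one another. Your elementary-abelian rank isolates $\DG{1}$ (rank $3$) and distinguishes $\DG{5}$ from $\DG{6}$ (ranks $2$ and $1$), and the test ``$w$ is a square of some $g\in G\smallsetminus N$'' isolates $\DG{3}$, but none of your invariants separates $\DG{2}(n,1,\ell)$ from $\DG{4}(n,1,\ell)$. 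Concretely, for $g=x^ry^sz^t$ with $r+s$ odd one has $g^2=x^{2(r+s)}$ in $\DG{2}$ and $g^2=x^{2(r+s)}w^{s}$ in $\DG{4}$; in both groups this is never $1$ and never $w$ (since $r+s$ is odd and $n\ge 2$), the centers agree, the involutions are exactly the three central ones in each, and one checks that even $\CC_G(\D{G})$ has the same isomorphism type in both. So an additional invariant is required here: the paper uses the K\"ulshammer-type count of conjugacy classes consisting of squares (\cref{prop:Kuelshammer}) in the algebra section, while the appendix sidesteps the issue entirely via the quotient induction. Either of these would close your gap; as written, the proposal does not.
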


\begin{thm}\label{thm:Trichotomy}
    The three groups $\DG{1}$, $\DG{5}$, $\DG{6}$ are pairwise non-isomorphic
    if they have homocyclic abelianizations \textup{(}i.e. $n = m$\textup{)}.
    Moreover, in this case $\DG{1} \cong \DG{2}$ and $\DG{3} \cong \DG{4} \cong \DG{5}$.
\end{thm}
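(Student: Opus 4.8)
The plan is to first settle the stated isomorphisms by inspecting presentations, then to separate $\DG{1}$ from the other two groups with a classical invariant, and finally to attack the delicate pair $\DG{5}$, $\DG{6}$. For the isomorphisms I would note that when $n = m$ the relation $x^{2^n} = 1$ in $\DG{2}$ (resp.\ $\DG{4}$) forces $x^{2^m} = 1$, so the power relation for $y$ collapses and the presentation becomes literally that of $\DG{1}$ (resp.\ $\DG{3}$); and that in $\DG{3}$ the triple $(y, x, z^{-1})$ is again a generating set which, using $n = m$, satisfies precisely the six defining relations of $\DG{5}$ (the commutator relations being symmetric under the exchange, and $z^{2^{\ell-1}}$ being central of order $2$). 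Comparing orders then yields $\DG{1} \cong \DG{2}$ and $\DG{3} \cong \DG{4} \cong \DG{5}$.

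To separate $\DG{1}$, I would invoke \cref{lem:Center}: for $n = m \ge 2$ the center $\Z(\DG{1}) \cong \Cyc{2^{n-1}} \times \Cyc{2^{n-1}} \times \Cyc{2}$ has rank three, whereas $\Z(\DG{5}) \cong \Z(\DG{6}) \cong \Cyc{2^{n}} \times \Cyc{2^{n-1}}$ has rank two, so $\DG{1}$ is isomorphic to neither. The remaining case $n = m = 1$ is the maximal class case, where the three groups are the dihedral, semidihedral and generalized quaternion groups of order $2^{\ell+2}$, which are classically pairwise non-isomorphic (cf.\ \cref{prop:MaximalClass}).

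The heart of the proof, and the step I expect to be hardest, is separating $\DG{5}$ from $\DG{6}$: these two groups share their order, abelianization and (isomorphism type of) center, so something finer is needed. The asymmetry I would exploit is that in $\DG{5}$ only one of $x^{2^n}$, $y^{2^n}$ equals the central involution $z^{2^{\ell-1}}$, while in $\DG{6}$ both do. First I would record two facts valid in each of the six groups: since $x^2$, $y^2$ are central and $[z, xy] = [z,y]\,[z,x]^y = z^{-2}z^2 = 1$, the centralizer $\CC_G(\D{G})$ is the abelian maximal subgroup $\langle x^2, y^2, z, xy \rangle$; and every $g = x^a y^b z^c$ satisfies $g^{2^k} = x^{2^k a} y^{2^k b} z^{2^{k-1}\varepsilon}$ with $\varepsilon \in \{\, 2c,\, 0,\, 2c-1 \,\}$ according to the parities of $a, b$, while $(xy)^{2^k} = x^{2^k} y^{2^k} z^{-2^{k-1}}$. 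Then I would split into two subcases. When $n = m \ge \ell$, I would compute the exponent of the characteristic subgroup $\CC_G(\D{G})$; feeding $z^{2^{n-1}} = 1$ (for $n > \ell$) or $z^{2^{n-1}} = z^{2^{\ell-1}}$ (for $n = \ell$) into the formula for $(xy)^{2^n}$, this exponent comes out as $2^{n+1}$ for one of the two groups and $2^{n}$ for the other, with the roles swapping between $n > \ell$ and $n = \ell$. When $n = m < \ell$ (this includes $n = m = 1$), the centralizers turn out isomorphic, so instead I would compare $|\{\, g \in G : g^{2^n} = 1 \,\}|$, which is invariant under group isomorphism: from the power formula an element $g = x^a y^b z^c$ with $a$ even and $b$ odd has $g^{2^n} = (y^{2^n})^b$, which vanishes in $\DG{5}$ for every $c$ but equals $z^{2^{\ell-1}} \ne 1$ in $\DG{6}$, and a check of the remaining parity patterns shows this is the only discrepancy, accounting for exactly $2^{2n+\ell-2}$ more elements of order dividing $2^n$ in $\DG{5}$ than in $\DG{6}$.

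The obstacle is thus concentrated entirely in this last pair: $\DG{5}$ and $\DG{6}$ agree not only on the coarse invariants above but also on the full multiset of element orders, the exponent, and even on the isomorphism type of $\CC_G(\D{G})$ once $n < \ell$, so isolating them forces one both to find a non-obvious invariant and to split according to whether $\ell$ is less than, equal to, or greater than $n$.
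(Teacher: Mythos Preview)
Your proof is correct but takes a genuinely different route from the paper for the key step of separating $\DG{5}$ from $\DG{6}$. The paper proves \cref{thm:Trichotomy} jointly with \cref{thm:Hexachotomy} by a double induction on $(n,m)$: after the base case $n=m=1$ via \cref{prop:MaximalClass}, it distinguishes $\DG{5}(n,n,\ell)$ from $\DG{6}(n,n,\ell)$ by observing (from the tables of maximal quotients in \cref{lem:MaximalQuotients}) that only the latter has $\DG{6}(n,n-1,\ell)$ as a maximal quotient, and this group is distinct from the other five with parameters $(n,n-1,\ell)$ by the inductive hypothesis. You instead compute direct group-theoretic invariants---the exponent of the abelian subgroup $\CC_G(\D{G})$ when $n\ge\ell$, and the number of elements of order dividing $2^n$ when $n<\ell$---and so avoid induction entirely at the price of a threefold case split on the relation between $n$ and $\ell$. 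The paper's method is more uniform (no dependence on $\ell$) and produces the descendant structure of the whole family as a byproduct; your method is more elementary and self-contained, and in spirit parallels the exponent-of-centralizer argument the paper itself uses in \cref{lem:G1vsG2}. The isomorphism and center portions of your argument coincide with the paper's.
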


These theorems follow immediately from \cref{main:A,lem:G1vsG2};
nevertheless, we will provide a direct group-theoretical proof.
We do this by considering the centers and maximal quotients of the groups.
We will write
\[
    \DG{1}(n, m, \ell), \dotsc, \DG{6}(n, m, \ell)
\]
to make the parameters $n$, $m$ and $\ell$ explicit when necessary.

\subsection{Maximal quotients}
To describe the maximal quotients, we first describe the socle in each case,
as the central involutions are exactly the generators of minimal normal subgroups.
By the socle of a finite $p$-group, we mean the subgroup generated by central elements of order~$p$.
From \cref{lem:Center}, we directly get the socles in all cases.

\begin{lem}\label{lem:Socle}
    Assume $n \ge 2$.
    Then the following holds.
    \begin{align*}
        \Soc{\DG{1}} &= \begin{cases}
            \langle x^{2^{n - 1}} \rangle \times \langle y^{2^{m - 1}} \rangle \times \langle z^{2^{\ell - 1}} \rangle                            \cong \Cyc{2} \times \Cyc{2} \times \Cyc{2}  & (m \ge 2) \\
            \langle x^{2^{n - 1}} \rangle \times \langle z^{2^{\ell - 1}} \rangle \phantom{\langle y^{2^{m - 1}} \rangle \times {}}               \cong \Cyc{2} \times \Cyc{2}                 & (m = 1);
        \end{cases}  \\
        \Soc{\DG{2}} &= \begin{cases}
            \langle x^{2^{n - 1}} \rangle \times \langle x^{-2^{m - 1}}y^{2^{m - 1}} \rangle \times \langle z^{2^{\ell - 1}} \rangle              \cong \Cyc{2} \times \Cyc{2} \times \Cyc{2}  & (m \ge 2) \\
            \langle x^{2^{n - 1}} \rangle \times \langle z^{2^{\ell - 1}} \rangle \phantom{\langle x^{-2^{m - 1}}y^{2^{m - 1}} \rangle \times {}} \cong \Cyc{2} \times \Cyc{2}                 & (m = 1);
        \end{cases}  \\
        \Soc{\DG{3}} &=
            \langle x^{2^{n - 1}} \rangle \times \langle z^{2^{\ell - 1}} \rangle                                                                 \cong \Cyc{2} \times \Cyc{2};                &           \\
        \Soc{\DG{4}} &=
            \langle x^{2^{n - 1}} \rangle \times \langle z^{2^{\ell - 1}} \rangle                                                                 \cong \Cyc{2} \times \Cyc{2};                &           \\
        \Soc{\DG{5}} &= \begin{cases}
            \langle y^{2^{m - 1}} \rangle \times \langle z^{2^{\ell - 1}} \rangle                                                                 \cong \Cyc{2} \times \Cyc{2}                 & (m \ge 2) \\
            \langle z^{2^{\ell - 1}} \rangle \phantom{\langle y^{2^{m - 1}} \rangle \times {}}                                                    \cong \Cyc{2}                                & (m = 1);
        \end{cases}  \\
        \Soc{\DG{6}} &= \begin{cases}
            \langle x^{-2^{m - 1}}y^{2^{m - 1}} \rangle \times \langle z^{2^{\ell - 1}} \rangle                                                   \cong \Cyc{2} \times \Cyc{2}                 & (m \ge 2) \\
            \langle z^{2^{\ell - 1}} \rangle \phantom{\langle x^{-2^{m - 1}}y^{2^{m - 1}} \rangle \times {}}                                      \cong \Cyc{2}                                & (m = 1).
        \end{cases}
    \end{align*}
\end{lem}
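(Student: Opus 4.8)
The plan is to deduce everything from \cref{lem:Center} together with one elementary observation about finite abelian $2$-groups. Since the socle $\Soc{G}$ is, by our convention, generated by the central elements of order~$2$, it equals $\Omega(\Z(G))$; and if a finite abelian $2$-group is written as an internal direct product $\langle g_1 \rangle \times \dotsb \times \langle g_k \rangle$ with $g_i$ of order $2^{e_i}$, then $\Omega$ of it is $\langle g_1^{2^{e_1 - 1}} \rangle \times \dotsb \times \langle g_k^{2^{e_k - 1}} \rangle$, where a factor with $e_i \le 1$ is already of order at most~$2$ and a factor with $e_i = 0$ simply drops out. So the whole task is to take the decompositions of $\Z(\DG{1}), \dotsc, \Z(\DG{6})$ supplied by \cref{lem:Center} and, for each cyclic factor, write down its unique subgroup of order~$2$ in terms of $x$, $y$ and $z$.

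For $\DG{1}$ and $\DG{2}$ this is immediate: the factors $\langle x^2 \rangle$ and $\langle y^2 \rangle$ (respectively $\langle x^{-2} y^2 \rangle$) have orders $2^{n - 1}$ and $2^{m - 1}$, so they contribute $\langle x^{2^{n - 1}} \rangle$ and $\langle y^{2^{m - 1}} \rangle$ (respectively $\langle x^{-2^{m - 1}} y^{2^{m - 1}} \rangle$), while $\langle z^{2^{\ell - 1}} \rangle$ is already of order~$2$; when $m = 1$ the middle factor is trivial and disappears, which is exactly the case split in the statement. The hypothesis $n \ge 2$ guarantees $x^{2^{n - 1}} \ne 1$, so the resulting direct product has the asserted isomorphism type, and whether $n > m$ or $n = m$ plays no role here.

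The only point needing a little care---and the mildest of obstacles---is that for $\DG{3}, \dotsc, \DG{6}$ the socle generators displayed in the statement are expressed through $z$, not through the generators of the cyclic factors of $\Z(G)$ appearing in \cref{lem:Center}, so one must rewrite using the defining relations. In $\DG{3}$ the factor $\langle y^2 \rangle$ has order $2^m$, hence socle $\langle y^{2^m} \rangle = \langle z^{2^{\ell - 1}} \rangle$ by $y^{2^m} = z^{2^{\ell - 1}}$; in $\DG{4}$ the factor $\langle x^{-2} y^2 \rangle$ has order $2^m$, hence socle $\langle x^{-2^m} y^{2^m} \rangle = \langle z^{2^{\ell - 1}} \rangle$ by $y^{2^m} = x^{2^m} z^{2^{\ell - 1}}$; and in both $\DG{5}$ and $\DG{6}$ the factor $\langle x^2 \rangle$ has order $2^n$, hence socle $\langle x^{2^n} \rangle = \langle z^{2^{\ell - 1}} \rangle$ by $x^{2^n} = z^{2^{\ell - 1}}$, while the remaining factor $\langle y^2 \rangle$ (respectively $\langle x^{-2} y^2 \rangle$) has order $2^{m - 1}$ and contributes $\langle y^{2^{m - 1}} \rangle$ (respectively $\langle x^{-2^{m - 1}} y^{2^{m - 1}} \rangle$) when $m \ge 2$ and nothing when $m = 1$. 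Collecting these factors reproduces the six displayed formulas, and the isomorphism types are read off immediately since every surviving factor has order~$2$.
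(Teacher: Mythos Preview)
Your proof is correct and follows exactly the approach the paper indicates: the paper's justification is the single sentence ``From \cref{lem:Center}, we directly get the socles in all cases'' preceding the lemma, and you have simply spelled out the computation of $\Omega$ on each cyclic direct factor together with the necessary rewriting via the defining relations for $\DG{3},\dotsc,\DG{6}$.
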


The next lemma describes all the maximal quotients of the six groups in \cref{thm:Presentations}.

\begin{lem}\label{lem:MaximalQuotients}
    Assume $n \ge 2$.
    Let $G$ be one of the groups $\DG{1}(n, m, \ell)$, \dots, $\DG{6}(n, m, \ell)$ and $Q$ a maximal quotient of $G$.
    Then $Q \cong G_{\scriptsizetextdice{\textnormal{?}}}(n', m', \ell')$
    for some $\textdice{\textnormal{?}} \in \{\dice{1}, \dotsc, \dice{6}\}$ and $n', m', \ell'$ satisfying $n'+m'+\ell' = n+m+\ell-1$\textup{;}
    we need to consider five cases
    \begin{enumerate}
        \item $n > m + 1$ and $m \ge 2$ \textup{(}\cref{tbl:MQCase1}\textup{)}
        \item $n = m + 1$ and $m \ge 2$ \textup{(}\cref{tbl:MQCase2}\textup{)}
        \item $n = m$     and $m \ge 2$ \textup{(}\cref{tbl:MQCase3}\textup{)}
        \item $n > m + 1$ and $m = 1$   \textup{(}\cref{tbl:MQCase4}\textup{)}
        \item $n = m + 1$ and $m = 1$   \textup{(}\cref{tbl:MQCase5}\textup{)}
    \end{enumerate}
    for all the possible values of the parameters for $Q$,
    and those are summarized in \cref{tbl:MQCase1,tbl:MQCase2,tbl:MQCase3,tbl:MQCase4,tbl:MQCase5}.

    \begin{table}[phtb]
        \centering

        % Cases n > m > 1:
        \begin{tabular}{cccc}
            \toprule
                                 & $(n - 1, m, \ell)$ & $(n, m - 1, \ell)$ & $(n, m, \ell - 1)$ \\
            \midrule
            $\DG{1}(n, m, \ell)$ & $\DG{1}$ $\DG{5}$  & $\DG{1}$ $\DG{3}$  & $\DG{1}$ \\
            $\DG{2}(n, m, \ell)$ & $\DG{2}$ $\DG{6}$  & $\DG{2}$ $\DG{4}$  & $\DG{2}$ \\
            $\DG{3}(n, m, \ell)$ & $\DG{3}$ $\DG{5}$  &                    & $\DG{1}$ \\
            $\DG{4}(n, m, \ell)$ & $\DG{4}$ $\DG{6}$  &                    & $\DG{2}$ \\
            $\DG{5}(n, m, \ell)$ &                    & $\DG{5}$           & $\DG{1}$ \\
            $\DG{6}(n, m, \ell)$ &                    & $\DG{6}$           & $\DG{2}$ \\
            \bottomrule \\
        \end{tabular}
        \vspace*{-.3cm}
        \caption{Maximal quotients when $n > m + 1$, $m \ge 2$.}
        \label{tbl:MQCase1}
        \begin{tabular}{cccc}
            \toprule
                                 & $(n - 1, m, \ell)$ & $(n, m - 1, \ell)$ & $(n, m, \ell - 1)$ \\
            \midrule
            $\DG{1}(n, m, \ell)$ & $\DG{1}$ $\DG{5}$  & $\DG{1}$ $\DG{3}$  & $\DG{1}$ \\
            $\DG{2}(n, m, \ell)$ & $\DG{1}$ $\DG{6}$  & $\DG{2}$ $\DG{4}$  & $\DG{2}$ \\
            $\DG{3}(n, m, \ell)$ & $\DG{5}$ $\DG{6}$  &                    & $\DG{1}$ \\
            $\DG{4}(n, m, \ell)$ & $\DG{5}$           &                    & $\DG{2}$ \\
            $\DG{5}(n, m, \ell)$ &                    & $\DG{5}$           & $\DG{1}$ \\
            $\DG{6}(n, m, \ell)$ &                    & $\DG{6}$           & $\DG{2}$ \\
            \bottomrule \\
        \end{tabular}
        \vspace*{-.3cm}
        \caption{Maximal quotients when $n = m + 1$, $m \ge 2$.}
        \label{tbl:MQCase2}

        % Case n = m > 1:
        \begin{tabular}{cccc}
            \toprule
                                 & $(n - 1, m, \ell)$ & $(n, m - 1, \ell)$                  & $(n, m, \ell - 1)$ \\
            \midrule
            $\DG{1}(n, m, \ell)$ &                    & $\DG{1}$ $\DG{2}$ $\DG{3}$ $\DG{4}$ & $\DG{1}$ \\
            $\DG{2}(n, m, \ell)$ &                    & $\DG{1}$ $\DG{2}$ $\DG{3}$ $\DG{4}$ & $\DG{2}$ \\
            $\DG{3}(n, m, \ell)$ &                    & $\DG{5}$                            & $\DG{1}$ \\
            $\DG{4}(n, m, \ell)$ &                    & $\DG{5}$                            & $\DG{2}$ \\
            $\DG{5}(n, m, \ell)$ &                    & $\DG{5}$                            & $\DG{1}$ \\
            $\DG{6}(n, m, \ell)$ &                    & $\DG{6}$                            & $\DG{2}$ \\
            \bottomrule \\
        \end{tabular}
        \vspace*{-.3cm}
        \caption{Maximal quotients when $n = m$, $m \ge 2$.}
        \label{tbl:MQCase3}

        % Cases n > m = 1:
        \begin{tabular}{cccc}
            \toprule
                                 & $(n - 1, m, \ell)$ & $(n, m - 1, \ell)$ & $(n, m, \ell - 1)$ \\
            \midrule
            $\DG{1}(n, m, \ell)$ & $\DG{1}$ $\DG{5}$  &                    & $\DG{1}$ \\
            $\DG{2}(n, m, \ell)$ & $\DG{2}$ $\DG{6}$  &                    & $\DG{2}$ \\
            $\DG{3}(n, m, \ell)$ & $\DG{3}$ $\DG{5}$  &                    & $\DG{1}$ \\
            $\DG{4}(n, m, \ell)$ & $\DG{4}$ $\DG{6}$  &                    & $\DG{2}$ \\
            $\DG{5}(n, m, \ell)$ &                    &                    & $\DG{1}$ \\
            $\DG{6}(n, m, \ell)$ &                    &                    & $\DG{2}$ \\
            \bottomrule \\
        \end{tabular}
        \vspace*{-.3cm}
        \caption{Maximal quotients when $n > m + 1$, $m = 1$.}
        \label{tbl:MQCase4}
        \begin{tabular}{cccc}
            \toprule
                                 & $(n - 1, m, \ell)$ & $(n, m - 1, \ell)$ & $(n, m, \ell - 1)$ \\
            \midrule
            $\DG{1}(n, m, \ell)$ & $\DG{1}$ $\DG{5}$  &                    & $\DG{1}$ \\
            $\DG{2}(n, m, \ell)$ & $\DG{1}$ $\DG{6}$  &                    & $\DG{2}$ \\
            $\DG{3}(n, m, \ell)$ & $\DG{5}$ $\DG{6}$  &                    & $\DG{1}$ \\
            $\DG{4}(n, m, \ell)$ & $\DG{5}$           &                    & $\DG{2}$ \\
            $\DG{5}(n, m, \ell)$ &                    &                    & $\DG{1}$ \\
            $\DG{6}(n, m, \ell)$ &                    &                    & $\DG{2}$ \\
            \bottomrule \\
        \end{tabular}
        \vspace*{-.3cm}
        \caption{Maximal quotients when $n = m + 1$, $m = 1$.}
        \label{tbl:MQCase5}
    \end{table}
\end{lem}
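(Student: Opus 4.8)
The plan is as follows. A minimal normal subgroup of a finite $p$-group is central of order $p$, so the maximal quotients of $G$ are exactly the groups $Q = G/\langle t\rangle$ with $t$ a central involution, and \cref{lem:Socle} lists all such $t$: there are seven of them when the socle has rank three (namely for $\DG{1}$, $\DG{2}$ with $m \ge 2$), three when it has rank two, and one when it has rank one. By \cref{lem:Socle} every central involution lies in $\Frat(G) = \langle x^2, y^2, z\rangle$ (here $n \ge 2$ is used), so $Q/\Frat(Q) \cong G/\Frat(G)$ and $Q$ is again two-generated, while $\D{Q}$ is a quotient of $\D{G} = \langle z\rangle$ and hence cyclic. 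Since $|Q| = 2^{n+m+\ell-1}$, it only remains to identify the standard presentation of each $Q$.

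The decisive case split is whether $t \in \D{G}$. If $t = z^{2^{\ell-1}}$, the unique involution of $\D{G}$, then the generators $x, y, z$ of $G$ descend to generators of $Q$ satisfying the same relations except that $z^{2^\ell}$ becomes $z^{2^{\ell-1}} = 1$, and in types $\dice{3}, \dice{4}$ (resp.\ $\dice{5}, \dice{6}$) the power relation $y^{2^m} = z^{2^{\ell-1}}$ (resp.\ $x^{2^n} = z^{2^{\ell-1}}$) collapses to $y^{2^m} = 1$ (resp.\ $x^{2^n} = 1$); this accounts for the $(n, m, \ell-1)$ columns of all five tables at once, with the understanding that when $\ell = 2$ the value $\ell' = 1$ is read as a formal parameter. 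If instead $t \notin \D{G}$, then $\D{Q} \cong \D{G}$, so $\ell' = \ell$, and the image $\bar t$ of $t$ in $G^{\mathrm{ab}} \cong \Cyc{2^n} \times \Cyc{2^m}$ is a nontrivial involution; a short computation of $G^{\mathrm{ab}}/\langle \bar t\rangle$ shows that exactly one of $n, m$ drops by one, which fixes the column. To name the group in that column one keeps $z$ and replaces $x, y$ by products with suitable central elements of $G$—precisely the kind of substitutions used in \cref{lem:Parametrization} and \cref{lem:Reduction}—so as to bring the two power relations into one of the six standard shapes; along the way one checks that $[y', x'] = z$ and $[z, x'] = [z, y'] = z^{-2}$ are preserved, which holds because every correction factor is a power of $x^2$ or $y^2$ and so centralizes $z$.

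Running this through the five parameter regimes produces \cref{tbl:MQCase1,tbl:MQCase2,tbl:MQCase3,tbl:MQCase4,tbl:MQCase5}. The regimes must be separated because (i) when $m = 1$ the socle carries no $y^{2^{m-1}}$-type involution, so the $(n, m-1, \ell)$ column is empty; and (ii) when $n = m+1$ or $n = m$ a quotient reducing a parameter can produce $n' = m'$, so the coincidences $\DG{1} \cong \DG{2}$ and $\DG{3} \cong \DG{4} \cong \DG{5}$—and the exceptional reduction $G_{(1,0,1)} \cong G_{(1,1,0)}$ of \cref{lem:Reduction}—come into play and the canonical representative must be recorded in the table.

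I expect the main difficulty to be purely organizational: up to seven involutions for each of six groups across five regimes, with the ``mixed'' involutions (products of two or three of the basic ones) each requiring a slightly different normalizing substitution. The only genuinely delicate points are keeping track of which representative among $\DG{1} \cong \DG{2}$ and $\DG{3} \cong \DG{4} \cong \DG{5}$ the table should display in the boundary regimes, and the degenerate bookkeeping at $\ell = 2$ where $\ell' = 1$; neither affects the correctness of the statement, only the care needed to fill in the tables.
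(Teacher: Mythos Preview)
Your proposal is correct and follows essentially the same approach as the paper: quotient by each central involution listed in \cref{lem:Socle}, then apply \cref{lem:Reduction} to bring the resulting presentation into one of the six standard forms. The paper states this in a single sentence, while you have (helpfully) spelled out the case split $t \in \D{G}$ versus $t \notin \D{G}$ and flagged the boundary coincidences at $n' = m'$; but the underlying mechanism is identical.
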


\begin{proof}
    In all cases, the proof is obtained by applying \cref{lem:Reduction} to each maximal quotient in each case,
    which corresponds to a minimal normal subgroup generated by a non-trivial element of the socle in \cref{lem:Socle}.
\end{proof}

We finish this subsection with an illustration of the studied groups by a graph, which can be derived from \cref{lem:MaximalQuotients} in \cref{fig:Forest}.

\begin{figure}[phtb]
    \centering
    \rotatebox{90}{\begin{minipage}{0.95\textheight}
        \centering
        \includegraphics[width=0.90\textwidth]{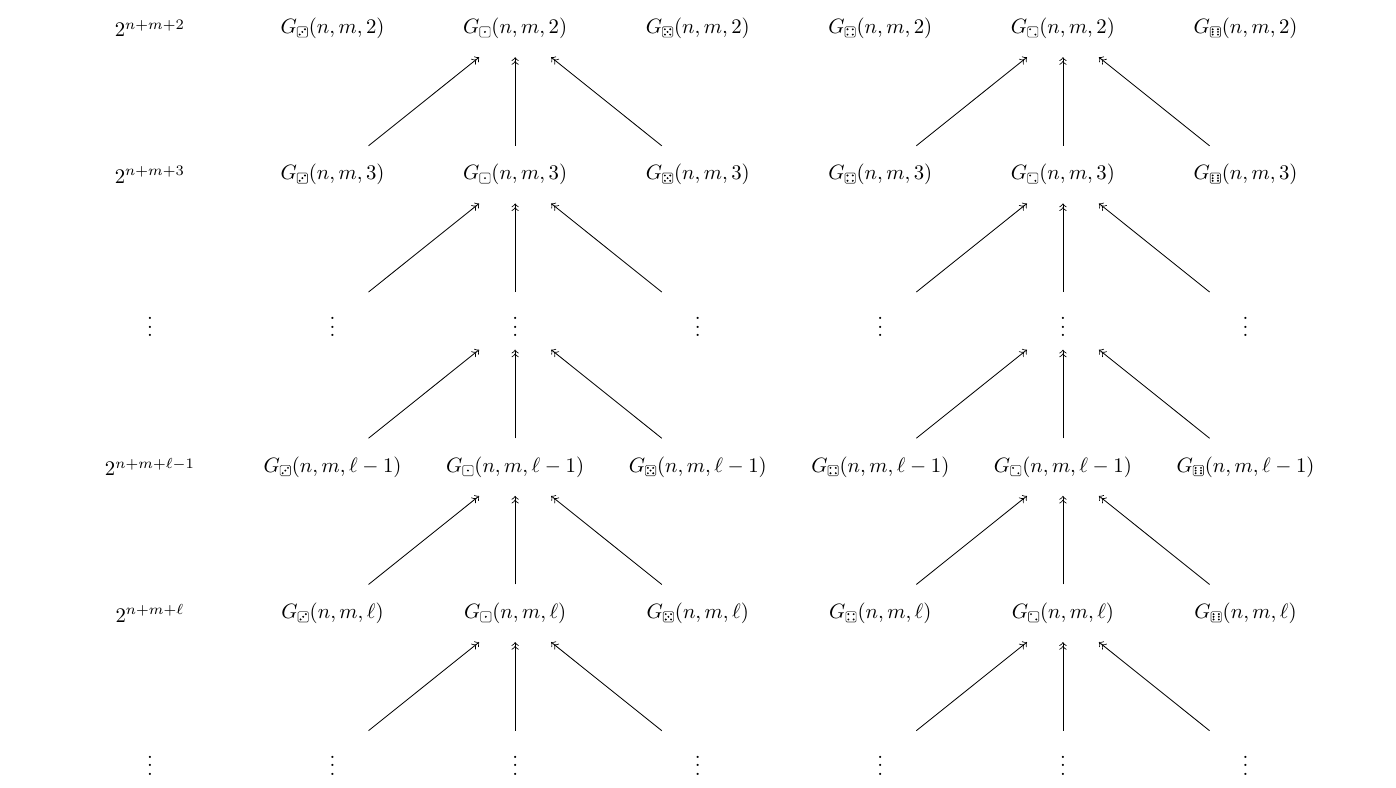}
        \caption{
            Finite $2$-groups with dihedral central quotient whose abelianization is isomorphic to $\Cyc{2^n} \times \Cyc{2^m}$ and not homocyclic.
            An arrow from a group $G$ to $H$ is drawn if $H$ is isomorphic to a maximal quotient of $G$,
            that is, $G$ is an immediate descendant of $H$ .
        }
        \label{fig:Forest}
    \end{minipage}}
\end{figure}

\subsection{Inductive argument}
We will now proceed to prove \cref{thm:Hexachotomy,thm:Trichotomy}.
This will be achieved by a double induction.
The outer induction runs on the first parameter, i.e. $n$,
and the inner on the second parameter, i.e. $m$.
We will need the following two cases for the base of the inductions.

Recall the standard presentations of finite $2$-groups of maximal class:
the dihedral, semidihedral and generalized quaternion groups.
\begin{alignat*}{3}
    D_{2^{n + 1}} = \langle\, a, b \mid a^2 &= 1,             &\quad b^{2^n} &= 1, &\quad b^a &= b^{-1} \,\rangle; \\
    S_{2^{n + 1}} = \langle\, a, b \mid a^2 &= 1,             &\quad b^{2^n} &= 1, &\quad b^a &= b^{2^{n - 1} - 1} \,\rangle; \\
    Q_{2^{n + 1}} = \langle\, a, b \mid a^2 &= b^{2^{n - 1}}, &\quad b^{2^n} &= 1, &\quad b^a &= b^{-1} \,\rangle.
\end{alignat*}

\begin{prop}\label{prop:MaximalClass}
    Assume $\ell \ge 2$.
    Then
    \begin{align*}
        D_{2^{\ell + 2}} \cong \DG{1}(1, 1, \ell); \quad
        S_{2^{\ell + 2}} \cong \DG{5}(1, 1, \ell); \quad
        Q_{2^{\ell + 2}} \cong \DG{6}(1, 1, \ell).
    \end{align*}
    In particular, $\DG{1}(1, 1, \ell)$, $\DG{5}(1, 1, \ell)$ and $\DG{6}(1, 1, \ell)$ are pairwise non-isomorphic.
\end{prop}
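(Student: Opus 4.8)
The plan is to realise each group of maximal class inside the corresponding group $\DG{1}(1,1,\ell)$, $\DG{5}(1,1,\ell)$ or $\DG{6}(1,1,\ell)$ by choosing a suitable pair of generators. Concretely, I would produce a homomorphism \emph{from} $D_{2^{\ell+2}}$, $S_{2^{\ell+2}}$ or $Q_{2^{\ell+2}}$ by checking that the chosen elements satisfy the standard presentation recalled above; such a homomorphism is automatically surjective, since $x$ and $y$ will clearly lie in the image and hence so will $z=[y,x]$, and it is then an isomorphism because both groups have order $2^{n+m+\ell}=2^{\ell+2}$. For the verification I would repeatedly use the relations $z^x=z^{-1}$ and $z^y=z^{-1}$ (which follow from $[z,x]=[z,y]=z^{-2}$ and $z^{2^\ell}=1$), the identity $yx=xyz$, the consequence $zy=yz^{-1}$, and the centrality of $z^{2^{\ell-1}}$.

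For $\DG{1}(1,1,\ell)$, take $a=x$ and $b=yx$. Since $x^2=y^2=1$ one has $(yx)^2=[y,x]=z$, hence $b^{2^{\ell+1}}=z^{2^\ell}=1$, while $b^a=x^{-1}(yx)x=xy=(yx)^{-1}=b^{-1}$ and $a^2=1$; these are exactly the defining relations of $D_{2^{\ell+2}}$. For $\DG{5}(1,1,\ell)$, take $a=y$ and $b=xy$. Then $a^2=y^2=1$ and $b^a=y^{-1}(xy)y=yx$, so $b^a b=(yx)(xy)=yx^2y=x^2y^2=z^{2^{\ell-1}}$ because $x^2=z^{2^{\ell-1}}$ is central and $y^2=1$; on the other hand $(xy)^2=x^2y^2z^{-1}=z^{2^{\ell-1}-1}$, which is an \emph{odd} power of $z$ as $\ell\ge 2$, so that $b^{2^\ell}=z^{2^{\ell-1}(2^{\ell-1}-1)}=z^{2^{\ell-1}}$ (using $2^{2\ell-2}\equiv 0\bmod 2^\ell$) and $b^{2^{\ell+1}}=1$. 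Thus $b^a b=b^{2^\ell}$, i.e.\ $b^a=b^{2^\ell-1}$, which together with $a^2=1$ is the presentation of $S_{2^{\ell+2}}$. For $\DG{6}(1,1,\ell)$, take $a=x$ and $b=xy$. Now $x^2=y^2=z^{2^{\ell-1}}$, so $x^2y^2=z^{2^\ell}=1$ and $(xy)^2=z^{-1}$, giving $b^{2^\ell}=z^{-2^{\ell-1}}=z^{2^{\ell-1}}=x^2=a^2$ and $b^{2^{\ell+1}}=1$; moreover $b^a=x^{-1}(xy)x=yx=xyz=bz=b\cdot b^{-2}=b^{-1}$. These are the defining relations of $Q_{2^{\ell+2}}$.

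The only real work is the bookkeeping in the conjugation relations: one must pass freely between $z$ and its expression as a power of $b$ — immediate in the dihedral and quaternion cases, where $z=b^2$ and $z=b^{-2}$ respectively, and in the semidihedral case using that $2^{\ell-1}-1$ is a unit modulo $2^\ell=\lvert z\rvert$ — and it is precisely the congruence $2^{2\ell-2}\equiv 0\bmod 2^\ell$, valid exactly because $\ell\ge 2$, that collapses $b^{2^\ell}$ onto the central involution; for $\ell=1$ the statement genuinely fails, as there is no semidihedral group of order $8$. The final clause is then immediate from the classical fact that the dihedral, semidihedral and generalized quaternion groups of the same order $2^{\ell+2}\ge 16$ are pairwise non-isomorphic (for instance, the generalized quaternion group has a unique involution, while $D_{2^{\ell+2}}$ has strictly more involutions than $S_{2^{\ell+2}}$), so the three realisations above are pairwise non-isomorphic as well.
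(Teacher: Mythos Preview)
Your proof is correct and follows essentially the same strategy as the paper: choose elements $a,b$ in each $\DG{i}(1,1,\ell)$ that satisfy the standard presentation of the corresponding maximal-class group, obtain a homomorphism from the presented group, and conclude by surjectivity and an order count. The paper uses the uniform assignment $a\mapsto y$, $b\mapsto xy$ in all three cases and carries out the verification only for $Q_{2^{\ell+2}}\cong\DG{6}(1,1,\ell)$; your choices differ cosmetically (swapping the roles of $x$ and $y$ in the dihedral and quaternion cases, where the defining relations make them interchangeable), but the computations and the logic are the same.
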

\begin{proof}
    Since the three cases have the same kind of isomorphism $a \mapsto y$ and $b \mapsto xy$,
    we only prove that $Q_{2^{\ell + 2}} \cong \DG{6}(1, 1, \ell)$ here.

    Let $\DG{6}(1, 1, \ell) = \langle x, y, z \rangle$ and define $a = y$ and $b = xy$.
    Since
    \begin{align*}
        b^2
        &= xyxy
        = x^2 y[y, x]y
        = x^2 y zy \\
        &= z^{2^{\ell - 1}} y zy
        = z^{2^{\ell - 1}} y^2 z[z, y]
        = z^{2^\ell} z[z, y]
        = z^{-1},
    \end{align*}
    we have, using that $y^2$ is central in the last equation,
    \begin{align*}
        a^2              &= y^2 = z^{2^{\ell - 1}} = z^{-2^{\ell - 1}} = b^{2^\ell}; \\
        b^{2^{\ell + 1}} &= z^{-2^{\ell}} = 1; \\
        b^a              &= (xy)^y = y^{-1} x y^2 = yx = xy[y, x] = xyz = b b^{-2} = b^{-1}.
    \end{align*}
    Hence there is an epimorphism from $Q_{2^{\ell + 2}}$ to $\DG{6}(1, 1, \ell)$.
    Since both groups have the same order, they must be isomorphic.
\end{proof}

\begin{lem}\label{lem:CornerCases}
    Assume $n \ge 2$.
    Then $\DG{1}(n, 1, 1) \not\cong \DG{2}(n, 1, 1)$.
\end{lem}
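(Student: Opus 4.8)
The plan is to distinguish the two groups by computing the subgroup $\Omega(G)$ generated by involutions, a variant of the rank argument used over $\F G$ in \cref{prop:Quillen}. Write $G = \DG{1}(n, 1, 1) = \langle x, y, z \rangle$ and $H = \DG{2}(n, 1, 1) = \langle a, b, c \rangle$. Since $m = \ell = 1$, the relation $z^{2} = 1$ turns $[z, x] = z^{-2}$ and $[z, y] = z^{-2}$ into triviality, so $z$ is central and $\D{G} = \langle z \rangle$ has order $2$; likewise $\D{H} = \langle c \rangle \le \Z(H)$. Thus both groups have nilpotency class $2$ and, being the maximal quotients of $\DG{1}(n, 1, 2)$ and $\DG{2}(n, 1, 2)$ modulo $z^{2}$ respectively (\cref{lem:MaximalQuotients}), order $2^{n + 2}$; in particular every element of $G$ has a unique normal form $x^{i} y^{j} z^{k}$ with $0 \le i < 2^{n}$ and $j, k \in \{0, 1\}$, and similarly for $H$.

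First I would enumerate the involutions of $G$. From $[y, x] = z$ one gets $y x y^{-1} = x z$, hence $y x^{i} y = x^{i} z^{i}$, and since $z$ is central with $z^{2} = 1$ this gives $(x^{i} z^{k})^{2} = x^{2i}$ and $(x^{i} y z^{k})^{2} = x^{2i} z^{i}$. As $n \ge 2$, the exponent $2^{n - 1}$ is even, so both are trivial exactly when $i \equiv 0 \pmod{2^{n - 1}}$; this produces the seven involutions $z$, $x^{2^{n-1}}$, $x^{2^{n-1}} z$, $y$, $y z$, $x^{2^{n-1}} y$, $x^{2^{n-1}} y z$, and therefore $\Omega(G) = \langle x^{2^{n-1}}, y, z \rangle \cong \Cyc{2} \times \Cyc{2} \times \Cyc{2}$. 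I would then run the same computation in $H$: from $[b, a] = c$ one gets $b a b^{-1} = a c$, hence $b a^{i} b = a^{i} c^{i} b^{2} = a^{i + 2} c^{i}$ using $b^{2} = a^{2}$, so that $(a^{i} c^{k})^{2} = a^{2i}$ and $(a^{i} b c^{k})^{2} = a^{2i + 2} c^{i}$. The first is trivial precisely for $i \equiv 0 \pmod{2^{n - 1}}$, but the second is never trivial, because $a^{2i + 2} = 1$ forces $i \equiv 2^{n - 1} - 1 \pmod{2^{n - 1}}$, which for $n \ge 2$ makes $i$ odd, so the factor $c^{i} = c \ne 1$ survives (here one uses $c \notin \langle a \rangle$, clear from the normal form). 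Hence the only involutions of $H$ are $c$, $a^{2^{n-1}}$, $a^{2^{n-1}} c$, so $\Omega(H) = \langle a^{2^{n-1}}, c \rangle \cong \Cyc{2} \times \Cyc{2}$.

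Consequently $G$ has an elementary abelian subgroup of rank $3$, whereas every elementary abelian subgroup of $H$ lies in $\langle a^{2^{n-1}}, c \rangle$ and so has rank at most $2$; hence $G \not\cong H$. The only genuine work is the bookkeeping of the squares $(a^{i} b^{j} c^{k})^{2}$ in $H$ together with the parity observation that, because $b^{2} = a^{2} \ne 1$ for $n \ge 2$, one can never cancel the factor $c^{i} = c$ that appears when squaring $a^{i} b$ --- precisely the degeneracy that disappears when $n = 1$, where both presentations collapse to $D_{8}$.
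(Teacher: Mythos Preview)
Your argument is correct. Both groups have class~$2$ with central commutator $z$ (resp.\ $c$) of order~$2$, and your squaring computations in normal form are accurate; the key point, that $(a^{i}bc^{k})^{2}=a^{2i+2}c^{i}$ can never be trivial for $n\ge 2$ because the two conditions $2^{n-1}\mid i+1$ and $2\mid i$ are incompatible, is exactly right.

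The paper's proof is different in flavour: instead of computing $\Omega$, it exhibits the maximal subgroup $\langle x^{2},y,z\rangle\cong \Cyc{2^{n-1}}\times\Cyc{2}\times\Cyc{2}$ of $\DG{1}(n,1,1)$ and then checks that all three maximal subgroups of $\DG{2}(n,1,1)$, namely $\langle a,c\rangle$, $\langle b,c\rangle$ and $\langle ab,a^{2}\rangle$, are two-generated. Both arguments ultimately exploit the same asymmetry, the relation $y^{2}=1$ versus $b^{2}=a^{2}\ne 1$, but detect it through different invariants. Your route via $\Omega$ is arguably more direct, since it avoids enumerating maximal subgroups and yields the sharper statement $|\Omega(G)|=8\ne 4=|\Omega(H)|$; the paper's route has the mild advantage of not requiring a full enumeration of involutions. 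Your closing remark about the collapse to $D_{8}$ when $n=1$ is a nice sanity check that the paper does not make explicit.
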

\begin{proof}
    Observe that $\DG{1}(n, 1, 1)$ has a maximal subgroup that is isomorphic to $\Cyc{2^{n - 1}} \times \Cyc{2} \times \Cyc{2}$,
    namely $\langle x^2, y, z \rangle$, while $\DG{2}(n, 1, 1)$ does not.
    Indeed, we have $\Frat(\DG{2}(n, 1, 1)) = \langle x^2, z \rangle$,
    so that the maximal subgroups of $\DG{2}(n, 1, 1)$ are $\langle x, z \rangle$, $\langle y, z \rangle$ and $\langle xy, x^2 \rangle$.
    Note here that $(xy)^2 = x^2 y^2 z = x^4 z$, so that $z \in \langle xy, x^2 \rangle$.
    Hence all the maximal subgroups of $\DG{2}(n, 1, 1)$ are two-generated.
\end{proof}

We are ready for the inductions.

\begin{proof}[Proof of \cref{thm:Hexachotomy,thm:Trichotomy}]
    We first note that when $n = m$, then the isomorphisms
    $\DG{1}(n, m, \ell) \cong \DG{2}(n, m, \ell)$ and $\DG{3}(n, m, \ell) \cong \DG{4}(n, m, \ell) \cong \DG{5}(n, m, \ell)$
    are clear from the presentations.

    We proceed by induction on $n$.
    If $n = 1$, then also $m = 1$ and the groups $\DG{1}(n, m, \ell)$, $\DG{5}(n, m, \ell)$ and $\DG{6}(n, m, \ell)$ are pairwise non-isomorphic by \cref{prop:MaximalClass}.

    So assume that $n \ge 2$ and that \cref{thm:Hexachotomy,thm:Trichotomy} hold for smaller values of $n$.
    We proceed by induction on $m$.
    When $m = 1$, then by \cref{lem:Center} the centers of $\DG{1}(n, m, \ell)$, $\DG{2}(n, m, \ell)$, $\DG{3}(n, m, \ell)$ and $\DG{4}(n, m, \ell)$ are not isomorphic to
    the centers of $\DG{5}(n, m, \ell)$ and $\DG{6}(n, m, \ell)$.
    By induction and \cref{tbl:MQCase4,tbl:MQCase5}, only $\DG{1}(n, m, \ell)$ or $\DG{2}(n, m, \ell)$ can map onto
    one of the groups $\DG{1}(n - 1, m, \ell)$ or $\DG{2}(n - 1, m, \ell)$,
    so these two groups are not isomorphic to either of $\DG{3}(n, m, \ell)$ or $\DG{4}(n, m, \ell)$.
    To see that $\DG{1}(n, m, \ell)$ is not isomorphic to $\DG{2}(n, m, \ell)$ note that,
    again by induction and \cref{tbl:MQCase4,tbl:MQCase5},
    $\DG{2}(n, m, \ell)$ maps onto $\DG{6}(n - 1, m, \ell)$, while $\DG{1}(n, m, \ell)$ does not.
    To see that $\DG{3}(n, m, \ell)$ is not isomorphic to $\DG{4}(n, m, \ell)$ note that, by \cref{tbl:MQCase4,tbl:MQCase5},
    exactly one of them maps onto $\DG{6}(n - 1, m, \ell)$ by induction.
    To distinguish between $\DG{5}(n, m, \ell)$ and $\DG{6}(n, m, \ell)$ we observe that by \cref{tbl:MQCase4,tbl:MQCase5}
    the only maximal quotient of $\DG{5}(n, m, \ell)$ is $\DG{1}(n, m, \ell - 1)$, while the only maximal quotient of $\DG{6}(n, m, \ell)$ is $\DG{2}(n, m, \ell - 1)$.
    If $\ell = 2$, then these quotients are not isomorphic and hence neither are the groups $\DG{5}(n, m, \ell)$ and $\DG{6}(n, m, \ell)$ by \cref{lem:CornerCases}.
    If $\ell > 2$, then the quotients $\DG{1}(n, m, \ell - 1)$ and $\DG{2}(n, m, \ell - 1)$ are also not isomorphic, as we showed earlier in this paragraph.
    This finishes the case $m = 1$.

    Consider next $n > m \ge 2$ and assume that \cref{thm:Hexachotomy,thm:Trichotomy} hold for smaller values of $n$ or $m$.
    Looking on the centers of the groups in \cref{lem:Center},
    we see that we only need to show that
    $\DG{1}(n, m, \ell) \not\cong \DG{2}(n, m, \ell)$, $\DG{3}(n, m, \ell) \not\cong \DG{4}(n, m, \ell)$ and $\DG{5}(n, m, \ell) \not\cong \DG{6}(n, m, \ell)$.
    To distinguish between $\DG{1}(n, m, \ell)$ and $\DG{2}(n, m, \ell)$ note that by induction and \cref{tbl:MQCase1,tbl:MQCase2}
    the group $\DG{1}(n, m, \ell)$ maps onto $\DG{1}(n, m - 1, \ell)$, while $\DG{2}(n, m, \ell)$ does not.
    To see that $\DG{3}(n, m, \ell) \not\cong \DG{4}(n, m, \ell)$ we note that, again by induction and \cref{tbl:MQCase1,tbl:MQCase2},
    exactly one of them maps onto $\DG{6}(n - 1, m, \ell)$.
    Finally, to observe that $\DG{5}(n, m, \ell)$ and $\DG{6}(n, m, \ell)$ are not isomorphic we also use induction and \cref{tbl:MQCase1,tbl:MQCase2}
    and see that $\DG{5}(n, m, \ell)$ maps onto $\DG{5}(n, m - 1, \ell)$, while $\DG{6}(n, m, \ell)$ does not.

    The last case to consider is $n = m$.
    Recall that then $\DG{1}(n, m, \ell) \cong \DG{2}(n, m, \ell)$ and $\DG{3}(n, m, \ell) \cong \DG{4}(n, m, \ell) \cong \DG{5}(n, m, \ell)$.
    So after looking at the centers in \cref{lem:Center}, it only remains to show that $\DG{5}(n, m, \ell)$ and $\DG{6}(n, m, \ell)$ are not isomorphic.
    By induction and \cref{tbl:MQCase3}, we observe that $\DG{6}(n, m, \ell)$ maps onto $\DG{6}(n, m - 1, \ell)$ which is not the case for $\DG{5}(n, m, \ell)$.
    This finishes the inner induction and hence also the induction step of the outer induction.
\end{proof}

\textbf{Acknowledgments:}
This work has been supported by the Madrid Government (Comunidad de Madrid - Spain) under the multiannual Agreement with UAM in the line for the Excellence of the University Research Staff in the context of the V PRICIT (Regional Programme of Research and Technological Innovation).
The authors acknowledges financial support from the Spanish Ministry of Science and Innovation, through the Severo Ochoa Programme for Centers of Excellence in R\& D (CEX2019-000904-S) and through the Ramon y Cajal grant program.

\bibliographystyle{abbrv}
\bibliography{references}
\end{document}